\newcounter{pcounter}
\theoremstyle{plain}
\newtheorem{thm}{Theorem}[section]
\newtheorem{prop}[thm]{Proposition}
\newtheorem{lem}[thm]{Lemma}
\newtheorem{cor}[thm]{Corollary}
\theoremstyle{definition}
\newtheorem{defn}[thm]{Definition}
\theoremstyle{remark}
\newtheorem{example}[thm]{Example}
\newcommand{\bC}{{\mathbb C}}
\newcommand{\bE}{{\mathbb E}}
\newcommand{\bF}{{\mathbb F}}
\newcommand{\bN}{{\mathbb N}}
\newcommand{\bZ}{{\mathbb Z}}
\newcommand{\cF}{{\mathcal F}}
\newcommand{\cH}{{\mathcal H}}
\newcommand{\cK}{{\mathcal K}}
\newcommand{\cM}{{\mathcal M}}
\newcommand{\cN}{{\mathcal N}}
\newcommand{\cO}{{\mathcal O}}
\newcommand{\cR}{{\mathcal R}}
\newcommand{\cU}{{\mathcal U}}
\newcommand{\actson}{{\curvearrowright}}
\newcommand\ev{\operatorname{ev}}
\newcommand\dom{\operatorname{dom}}
\newcommand\Hom{\operatorname{Hom}}
\newcommand\orb{\operatorname{orb}}
\newcommand\Proj{\operatorname{Proj}}
\newcommand\Prob{\operatorname{Prob}}
\newcommand\rea{\operatorname{Re}}
\newcommand\Span{\operatorname{span}}
\newcommand\Tr{\operatorname{Tr}}
\newcommand{\ip}[1]{\left\langle#1\right\rangle}
\newcommand{\norm}[1]{\left\lVert#1\right\rVert}
\newcommand{\eps}{\varepsilon}
\title{Consequences of the random matrix solution to the Peterson-Thom conjecture}
\author{Ben Hayes}
\address{\parbox{\linewidth}{Department of Mathematics, University of Virginia, \\
141 Cabell Drive, Kerchof Hall,
P.O. Box 400137
Charlottesville, VA 22904}}
\email{brh5c@virginia.edu}
\urladdr{https://sites.google.com/site/benhayeshomepage/home}
\author{David Jekel}
\address{\parbox{\linewidth}{Universitets parken 5,
2100 Copenhagen $\varnothing$}}
\email{daj@math.ku.dk}
\urladdr{https://davidjekel.com/}
\author{Srivatsav Kunnawalkam Elayavalli}
\address{\parbox{\linewidth}{Department of Mathematics, University of California, \\
San Diego, 9500 Gilman Drive \# 0112, La Jolla, CA 92093}}
\email{srivatsav.kunnawalkam.elayavalli@vanderbilt.edu}
\urladdr{https://sites.google.com/view/srivatsavke}
\thanks{B. Hayes gratefully acknowledges support from the NSF grant DMS-2000105. D. Jekel gratefully acknowledges support from the NSF grant DMS-2002826.  S. Kunnawalkam Elayavalli gratefully acknowledges support from the Simons Postdoctoral Fellowship.}
\begin{document}

\begin{abstract}
We show  various new structural properties of free group factors using the recent resolution (due independently to Belinschi-Capitaine and Bordenave-Collins) of the Peterson-Thom conjecture. These results include  the resolution to the coarseness conjecture independently due to the first-named author and Popa,   a generalization of Ozawa-Popa's celebrated strong solidity result  using vastly more general versions of the normalizer (and in an ultraproduct setting),  
    a dichotomy result for intertwining of maximal amenable subalgebras of interpolated free group factors,  as well as application to ultraproduct embeddings of nonamenable subalgebras of interpolated free group factors.
    %In this paper we show  various structural properties of interpolated free group factors as corollaries of the recent resolution (due independently to Belinschi-Capitaine and Bordenave-Collins) of the Peterson-Thom conjecture. Our results include  a generalization for interpolated free group factors of Ozawa-Popa's celebrated strong solidity result  using vastly more general versions of the normalizer (and in an ultraproduct setting),
    %a dichotomy result for intertwining of maximal amenable subalgebras of interpolated free group factors, resolution to the coarseness conjecture independently due to the first-named author and Popa, as well as application to ultraproduct embeddings of nonamenable subalgebras of interpolated free group factors.

    % The proofs are intended to be accessible to operator algebraists who are not experts in free entropy dimension or $1$-bounded entropy.
\end{abstract}
\maketitle

\section{Introduction}

%The structure of the group von Neumann algebras associated to the countable free groups (also known as  free group factors), has been a great source of mystery from the early days of the development of the theory of II$_1$ factors. The reason behind a fundamental importance to this question is seen in    the key early insight of Murray and von Neumann in \cite{Murray-Neumann36}, wherein they exhibited a structural property of the free group factors, the lack of which  they referred to as Property Gamma, that is in the  existence  of non trivial central sequences. This was used by them to distinguish the free group factors from the separable hyperfinite II$_1$  factor, thereby establishing the first two examples of non isomorphic separable II$_1$ factors.   This early contribution and the notorious and iconic open question it left behind of distinguishing between the free group factors themselves fueled the great need to  better understand their structure. In the wake of almost a century of deep study there have been several remarkable  achievements in the project of understanding the structure of the free group factors.

The structure of the group von Neumann algebras associated to the countable free groups (also known as free group factors), has been a constant source of new results and new mysteries.
Murray and von Neumann in \cite{Murray-Neumann36} showed that the free group factors are full, i.e. they have no central sequences, and they used this structural property to distinguish the free group factors from the separable hyperfinite II$_1$  factor, thus giving the first example of two provably non-isomorphic separable II$_1$ factors.   Their work left behind the now notorious open question of whether the free group factors themselves are isomorphic for different numbers of generators.  Almost a century has passed in the development of II$_1$ factors, in which the quest to understand the structure of free group factors has been a recurring theme with several remarkable achievements.

One avenue of this research is the structure of subalgebras of free group factors.
% an early
A foundational
discovery of Popa \cite{PopaMaximalAmenable} showed that every subalgebra that strictly contains the generator MASA (maximally abelian subalgebra) in a free group factor must be full, in particular nonamenable (amenability is equivalent to hyperfiniteness by  fundamental work of Connes \cite{Connes}).
This answered in the negative a question of Kadison at the 1967 Baton Rouge conference who asked if every self-adjoint operator in a $\textrm{II}_{1}$-factor is contained in a hyperfinite subfactor.
The technique of asymptotic orthogonality developed by Popa to achieve the above result has been used successfully to establish this maximal  amenability property for various natural subalgebras of the free group factors, such as the radial MASA \cite{CFRW} (see also \cite{2AuthorsOneCup, ParShiWen}). Recently Boutonnet and Popa also construct a continuum size family $(M_{\alpha})_{\alpha}$ of interesting maximally amenable subalgebras \cite{boutonnet2023maximal} in any free product of diffuse tracial von Neumann algebras (in particular for free group factors) with the property that $M_{\alpha}$ is not unitarily conjugate to $M_{\beta}$ if $\alpha\ne\beta$.   

Maximal amenability can be enhanced to an absorption phenomenon as follows. We say that a diffuse $P\leq M$ has the \emph{absorbing amenability property} if whenever $Q\leq M$ is  amenable, and $P\cap Q$ is diffuse, then $Q\leq P$. By modifying Popa's asymptotic orthogonality property, it was shown in \cite{CyrilAOP, WenAOP} respectively that the generator MASA and the radial MASA  admit the absorbing amenability property. This work inspired many papers establishing the absorbing amenability property (and other absorption properties such as Gamma stability) in many examples (see \cite{2AuthorsOneCup, FreePinsker, ParShiWen}).

Given a finite\footnote{ it is not necessary for $M$ to be finite. However, it could be argued that in the general setting one should require all subalgebras to be images of normal conditional expectations. We leave it to those more versed in  Tomita-Takesaki theory to work out the appropriate definition here.} von Neumann algebra $M$, we say that $M$ has \emph{unique maximal amenable extensions} if for every diffuse, amenable subalgebra $Q\leq M$ there is a \emph{unique} maximal amenable $P\leq M$ with $Q\subseteq P$. In \cite{PetersonThom}, Peterson-Thom  conjectured  that any diffuse, amenable subalgebra of a free group factor has unique maximal amenable extensions, which came to be known the \emph{Peterson-Thom conjecture}. This conjecture was motivated by both Peterson-Thom's analogous  insights  on groups with positive first $L^{2}$-Betti numbers and previous work of Ozawa-Popa \cite{OzPopaCartan}, Peterson \cite{PetersonDeriva}, and Jung \cite{JungSB}. One can apply Zorn's lemma to show that for any von Neumann algebra $M$, and any amenable $Q\leq M$, there is \emph{some} maximal amenable $P\leq M$ with $Q\subseteq P$. The novelty of the Peterson-Thom conjecture is that such a $P$ should be \emph{unique}. The Peterson-Thom conjecture is then equivalent to the statement that every maximal amenable subalgebra of a free group factor has the absorbing amenability property.

In \cite{HayesPT}, the first-named author formulated a conjecture on random matrices which he showed to imply the Peterson-Thom conjecture. Several works in random matrices made progress towards this random matrix conjecture \cite{MatrixConc, CGPStrongTen, ParraudAsym}. Recent breakthroughs of Belinschi and Capitaine \cite{PTProperty} and of Bordenave and Collins \cite{bordenave2023norm} independently prove this random matrix conjecture, thus resolving the Peterson-Thom conjecture in the positive.

The reduction of the Peterson-Thom conjecture to a random matrix problem uses Voiculescu's microstates free entropy dimension theory (see \cite{VoicAsyFree, Voiculescu95, FreeEntropyDimensionII, FreeEntropyDimensionIII}), namely the $1$-bounded entropy implicitly defined by Jung \cite[Theorem 3.2]{JungSB} and explicitly by the first-named author in \cite{Hayes8}. We denote the $1$-bounded entropy of an algebra $M$ by $h(M)$ (see Appendix A for the precise definition, which we will not need in the main body of the paper).
The $1$-bounded entropy has several permanence properties which show that the collection of algebras $Q$ with $h(Q:M)\leq 0$ is invariant under various operations such as taking subalgebras, taking the von Neumann algebra generated by its normalizer (or other weakenings of the normalizer) of an algebra, or taking the join of two algebras with diffuse intersection; see Section \ref{sec: 1 bounded entropy} for a list of such properties. Because of the permanence properties that the $1$-bounded entropy enjoys, solving the Peterson-Thom conjecture via $1$-bounded entropy proves several results beyond showing that free group factors have the absorbing amenability property.
This paper will explain in detail several corollaries of the recent resolution of the Peterson-Thom conjecture. As shown in  \cite{PopaWeakInter}, solving the Peterson-Thom property via $1$-bounded entropy also resolves it for the interpolated free group factors $L(\bF_{t})$ independently defined by Dykema \cite{DykemaIFG} and R{\^a}dulescu \cite{RadulaescuIFG}, for $t>1$.
We give a separate proof of this in Section \ref{sec: Pinsker algebra interpolate}.  In fact, because of our work in Section \ref{sec: Pinsker algebra interpolate}, all the main results in this paper apply to interpolated free group factors, and not just free group factors.

Our first main result is the positive resolution of the \emph{coarseness conjecture} independently due to the first-named author \cite[Conjecture 1.12]{Hayes8} and Popa \cite[Conjecture 5.2]{PopaWeakInter}. If $M$ is a von Neumann algebra, an $M$-$M$ bimodule $\cH$ is a Hilbert space with normal left and right actions of $M$ which commute. We use $_M\cH_M$ to mean that $\cH$ is an $M$-$M$ bimodule. If $\cH,\cK$ are $M$-$M$ bimodules, we use $_M\cH_M\leq _M\cK_M$ to mean that there is $M$-bimodular isometry $\cH\to \cK$. If $\cH_{1}\subseteq \cH_{2}$ are Hilbert spaces, we use $\cH_{2}\ominus \cH_{1}$ for $\cH_{1}^{\perp}\cap \cH_{2}$.

\begin{thm}\label{thm:coarseness intro}
Let $t>1$. For any maximal amenable subalgebra $P\leq L(\bF_{t})$ we have
\[_{P}[L^{2}(L(\bF_{t}))\ominus L^{2}(P)]_{P}\leq _{P}(L^{2}(P)\otimes L^{2}(P))^{\oplus \infty} \,_{P}.\]
\end{thm}
In \cite{PopaWeakInter}, this property is referred to as \emph{coarseness} of the inclusion $P\leq L(\bF_{t})$. As explained in the introduction to \cite{PopaWeakInter}, we may think of coarseness as the ``most random'' position a subalgebra can be in.

It is of interest to specialize Theorem \ref{thm:coarseness intro} to the case where $P$ is abelian.
Suppose $(M,\tau)$ is a tracial von Neumann algebra, and $A\leq M$ is a maximal abelian $*$-subalgebra. Write $A=L^{\infty}(X,\mu)$ for some compact Hausdorff space $X$ and some Borel probability measure $\mu$ on $X$. The representation
\[\pi\colon C(X)\otimes C(X)\to B(L^{2}(M)\ominus L^{2}(A))\]
given by
\[\pi(f\otimes g)\xi=f\xi g,\]
gives rise to a spectral measure $E$ on $X\times X$ whose marginals are Radon-Nikodym equivalent to $\mu$. We say that $\nu\in \Prob(X\times X)$ is a \emph{left/right measure} of $A\leq M$ if it is Radon-Nikodym equivalent to $E$. One often abuses terminology and refers to \emph{the} left/right measure to refer to any element of this equivalence class of measures. The measure class of this measure $\nu$, together with the multiplicity function $m\colon X\times X\to \bN\cup\{0\}\cup \{\infty\}$ is usually called the measure-multiplicity invariant (see \cite{FelMoore, NeshStorm}), which has also been investigated in \cite{DykMukLap, MukMasa, PopaWeakInter}. The essential range of the multiplicity function is called the Pukanszky invariant; this was defined in \cite{Puk} and further studied in \cite{DykSinSmiPuk, PopaInterwineSpace, RadulPik,RobStegPuk, SinSmithPUk, WhitePuk}. By \cite[Theorem 4.1]{PopaGeThin} and \cite[Corollary 3.8 (1)]{PopaInterwineSpace} we know that every MASA in an interpolated free group factor has unbounded multiplicity function.

\begin{thm}
Let $M=L(\bF_{t})$ for $t>1$. Suppose that $A\leq M$ is abelian and a maximal amenable subalgebra of $M$. Write $A=L^{\infty}(X,\mu)$ for some compact metrizable space $X$ and some Borel probability measure on $X$. Then the left/right measure of $A\leq M$ is absolutely continuous with respect to $\mu\otimes \mu$.
\end{thm}

Our work shows that for MASAs in interpolated free group factors which are also maximal amenable, the measure given in the measure-multiplicity invariant has to be absolutely continuous with respect to the product measure $\mu\otimes \mu$. More concretely, if we use the fact that all standard probability spaces are isomorphic to reduce to the case where $(X,\mu)$ is $[0,1]$ with Lebesgue measure, then the measure given in the measure-multiplicity invariant has to be absolutely continuous with respect to Lebesgue measure on the unit square.

One of the landmark structural results about free group factors is the solidity property that the commutant of any diffuse subalgebra is amenable. Ozawa achieved this result first by introducing $\mathrm{C}^*$-algebraic boundary techniques \cite{OzawaSolid}.  Popa then gave a different proof using his influential s-malleable deformations and spectral gap rigidity ideas \cite{Popasolidity}.
% Popa \cite{Popasolidity} gave a different proof of solidity for free group factors, using his revolutionary s-malleable deformation ideas. The third approach is due to Peterson in \cite{PetersonDeriva}, who verified solidity for more examples using a conceptually new method based on the theory of closable derivations.
Peterson verified solidity for more examples using a conceptually new approach based on the theory of closable derivations \cite{PetersonDeriva}. 
All of these results apply to algebras much more general than free group factors, for instance von Neumann algebras of hyperbolic groups (see \cite{ChifanSinclair, OzPopaII, PopaVaesFree, PopaVaesHyp, SinclairGuassian, dkep2022properly}).

Despite the early success of free entropy theory in establishing global structural properties of free group factors (for instance absence of Cartan subalgebras \cite{FreeEntropyDimensionIII} and primeness \cite{GePrime}), solidity was still out of reach by free entropy methods.  In this paper our first result is a proof of Ozawa's solidity theorem based on free entropy dimension techniques, which is completely different from previous arguments.
% The first result we wish to point out is a generalization of strong solidity, using the conceptually new approach that is free entropy,
In fact, for interpolated free group factors, we strengthen the celebrated strong solidity theorem of Ozawa-Popa \cite[Corollary 1]{OzPopaCartan} using a vastly more general version of the normalizer. A first example is the $1$-sided quasi-normalizer defined in \cite{IzumiLongoPopa,PimnserPopa,PopaQR} (building off of ideas in \cite{PopaOrthoPairs}),
\[q^{1}\mathcal{N}_{M}(N)=\left\{x\in M:\mbox{ there exists $x_{1},\cdots,x_{n}\in M$ so that } xN\subseteq \sum_{j=1}^{n}Nx_{j}\right\},\]
we also consider the $wq$-normalizer, defined in \cite{PopaStrongRigidity, PopaCohomologyOE, IPP, GalatanPopa},
\[\mathcal{N}^{wq}_{M}(N)=\{u\in \mathcal{U}(M):uNu^{*}\cap N\textnormal{ is diffuse}\}\]
and its cousin the very weak quasi-normalizer
\[
\mathcal{N}^{vwq}_{M}(N)= \{u\in \mathcal{U}(M): \text{there exists } v\in \mathcal{U}(M) \text{ so that } uNv\cap N\\ \text{ is diffuse.}\}.
\]
Since $uNv\cap N$ is not an algebra, the phrase ``$uNv\cap N$ is diffuse'' should be interpreted as saying that there is a sequence of unitaries $v_{n}\in uNv\cap N$ which tend to zero weakly.
We also consider the \emph{weak intertwining space} $wI_{M}(Q,Q)$ due to Popa \cite{PopaMC, PopaWeakInter} (we restate the definition in Definition \ref{defn:weak intertwining space} of this paper). As shown in \cite[Proposition 3.2]{Hayes8} and Section \ref{sec: singular subspace}, all of these are contained in the anti-coarse space,
 \[
 \cH_{\textnormal{anti-c}}(N\leq M)=\bigcap_{T\in \Hom_{N-N}(L^{2}(M),L^{2}(N)\otimes L^{2}(N))}\ker(T).
\]
Here $\Hom_{N-N}(L^{2}(M),L^{2}(N)\otimes L^{2}(N)$ is the space of bounded, linear, $N$-$N$ bimodular maps $T\colon L^{2}(M)\to L^{2}(N)\otimes L^{2}(N)$. Our next main result is a statement about the most general setting of the anti-coarse space, however for the readers sake we state it in the context of these four examples.

\begin{thm}\label{intro thm: solidity}
Let $t > 1$ and let $Q\leq L(\bF_{t})$ be a diffuse, amenable subalgebra. Then $W^{*}(\cH_{\textnormal{anti-c}}(Q\leq L(\bF_{t})))$ remains amenable. In particular, for any
\[
X\subseteq q^{1}\cN_{L(\bF_{t})}(Q)\cup \cN_{L(\bF_{t})}^{wq}(Q)\cup wI_{L(\bF_{t})}(Q,Q)\cup\mathcal{N}^{vwq}_{L(\bF_{t})}(Q)
\]
we have that $W^*(X)$ is amenable.
\end{thm}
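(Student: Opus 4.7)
The plan is to deduce the theorem from the coarseness result (Theorem \ref{thm:coarseness intro}) together with the resolution of the Peterson-Thom conjecture. Set $M = L(\bF_t)$ and $N = W^*(\cH_{\textnormal{anti-c}}(Q \leq M))$. My guiding idea is that anti-coarseness over $Q$ should force vectors to lie in $L^2(P)$, where $P$ is the unique maximal amenable extension of $Q$ in $M$.

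First I would invoke the Peterson-Thom theorem to obtain $P \leq M$, the unique maximal amenable subalgebra containing $Q$. Then I would apply Theorem \ref{thm:coarseness intro} to $P$ to get a $P$-$P$ bimodular isometric embedding
\[
\iota\colon L^2(M) \ominus L^2(P) \hookrightarrow \bigl(L^2(P) \otimes L^2(P)\bigr)^{\oplus \infty}.
\]
Viewing everything as $Q$-$Q$ bimodules (via $Q \leq P$), both $_Q L^2(P)$ and $L^2(P)_Q$ decompose as direct sums of copies of $L^2(Q)$, so the target of $\iota$ decomposes into an infinite direct sum of copies of the $Q$-$Q$ coarse bimodule $L^2(Q) \otimes L^2(Q)$.

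Next, take any $\xi \in \cH_{\textnormal{anti-c}}(Q \leq M)$ and decompose $\xi = \xi_P + \xi_\perp$ with $\xi_P \in L^2(P)$ and $\xi_\perp \in L^2(M) \ominus L^2(P)$. If $\xi_\perp \neq 0$ then $\iota(\xi_\perp) \neq 0$, so it has a nonzero component in at least one $L^2(Q) \otimes L^2(Q)$ summand of the target; composing $\iota$ with the orthogonal projection $L^2(M) \to L^2(M) \ominus L^2(P)$ and with the projection onto that summand (both $Q$-$Q$ bimodular) yields a $Q$-$Q$ bimodular map $T\colon L^2(M) \to L^2(Q) \otimes L^2(Q)$ with $T\xi = T(\xi_\perp) \neq 0$, contradicting $\xi \in \cH_{\textnormal{anti-c}}(Q \leq M)$. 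Hence $\cH_{\textnormal{anti-c}}(Q \leq M) \subseteq L^2(P)$, and therefore $N \subseteq P$ is amenable.

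The ``in particular'' assertion then follows immediately from the inclusions
\[
q^1 \cN_M(Q) \cup \cN^{wq}_M(Q) \cup wI_M(Q, Q) \cup \cN^{vwq}_M(Q) \subseteq \cH_{\textnormal{anti-c}}(Q \leq M),
\]
which for the one-sided quasinormalizer is \cite[Proposition 3.2]{Hayes8} and for the remaining three objects I expect to be established in the singular subspace section. The main obstacle is therefore upstream: all the real work is in proving Theorem \ref{thm:coarseness intro}, which rests on the $1$-bounded entropy permanence properties and the random matrix resolution of the Peterson-Thom conjecture. Once coarseness and Peterson-Thom are in hand, the deduction of the theorem is essentially a bimodule-decomposition exercise.
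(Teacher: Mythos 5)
Your argument is correct, but it takes a genuinely different route from the paper's. The paper proves this theorem (as the first part of its spectral $\omega$-solidity corollary in Section \ref{sec: omega-mega solidity}) purely by $1$-bounded entropy bookkeeping: Property P\ref{item:preservation under normalizers} applied directly to $Q$ gives $h(W^{*}(\cH_{\textnormal{anti-c}}(Q\leq L(\bF_{t}))):L(\bF_{t}))=h(Q:L(\bF_{t}))\leq 0$, and Corollary \ref{Pinsker algebras of interpolated fgf} then converts $h\leq 0$ into amenability. You instead pass through the unique maximal amenable extension $P$ of $Q$ and the coarseness of $P\leq L(\bF_{t})$ (Theorem \ref{thm:coarseness intro}), and run a bimodule-restriction argument to conclude $\cH_{\textnormal{anti-c}}(Q\leq L(\bF_{t}))\subseteq L^{2}(P)$; that argument is sound, since the coarse $P$-$P$ bimodule restricted to $Q$-$Q$ embeds into an infinite multiple of the coarse $Q$-$Q$ bimodule (your phrase ``decompose as direct sums of copies of $L^{2}(Q)$'' should read ``embed into multiples of $L^{2}(Q)$,'' a harmless imprecision), and composing the bimodular isometries and projections produces the $T$ witnessing $\xi_{\perp}=0$. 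Note that the two roads rest on the same permanence property: the paper applies P\ref{item:preservation under normalizers} to $Q$, while the coarseness theorem you invoke is itself proved by applying it to $P$. The paper's route is shorter and scales better --- the identical entropy computation yields the ultraproduct version and the transfinite iteration of Corollary \ref{cor: fun with ordinals}, which would be awkward to extract from the bimodule picture --- whereas your route has the merit of explicitly locating $W^{*}(\cH_{\textnormal{anti-c}}(Q\leq L(\bF_{t})))$ inside the Pinsker algebra $P$ of $Q$ rather than merely certifying its amenability.
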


We refer the reader to Section \ref{sec: omega-mega solidity} for the precise definition of $W^{*}(Y)$ for $Y \subseteq L^{2}(M)$. The case $X=\cN_{L(\bF_{t})}(Q)$ in the above theorem recovers the strong solidity theorem of Ozawa-Popa \cite[Corollary 1]{OzPopaCartan} for the special case of interpolated free group factors. Theorem \ref{intro thm: solidity} is already new for $t\in \bN$ for and when  $X\in \{q^{1}\cN_{L(\bF_{t})}(Q),\cN^{wq}_{L(\bF_{t})}(Q), wI_{L(\bF_{t})}(Q,Q),\cN^{vwq}_{L(\bF_{t})}(Q)\}$.
We may, in fact, deduce a further generalization of strong solidity for interpolated free group factors in an ultraproduct framework.

\begin{thm}
Let $t\in (1,+\infty)$ and let $\omega$ be a free ultrafilter on $\bN$.  Suppose that $Q\leq L(\bF_{t})^{\omega}$ is a diffuse, amenable subalgebra. Suppose we are given Neumann subalgebras $Q_{\alpha}$ defined for ordinals $\alpha$ satisfying the following:
\begin{itemize}
    \item $Q_{0}=Q$,
    \item if $\alpha$ is a successor ordinal then $Q_{\alpha}=W^{*}(X_{\alpha},Q_{\alpha-1})$ where
    \[X_{\alpha}\subseteq \cH_{\textnormal{anti-c}}(Q_{\alpha-1}\leq L(\bF_{t})),\]
    (for example if
    \[X_{\alpha}\subseteq q^{1}\cN_{L(\bF_{t})^{\omega}}(Q_{\alpha-1})\cup \cN_{L(\bF_{t})^{\omega}}^{wq}(Q_{\alpha-1})\cup wI_{L(\bF_{t})^{\omega}}(Q_{\alpha-1},Q_{\alpha-1})\cup \cN^{vwq}_{M}(Q_{\alpha-1}) )\]
    \item if $\alpha$ is a limit ordinal, then $Q_{\alpha}=\overline{\bigcup_{\beta<\alpha}Q_{\beta}}^{SOT}$.
\end{itemize}
Then for any ordinal $\alpha$ we have $Q_{\alpha}\cap L(\bF_{t})$ is amenable.
In particular, $L(\bF_{t})$ has the following Gamma stability property: if $Q\leq L(\bF_{t})^{\omega}$,
% is diffuse,
and if $Q'\cap L(\bF_{t})^{\omega}$ is diffuse, then $Q\cap L(\bF_{t})$ is amenable.
\end{thm}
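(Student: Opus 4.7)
The plan is to prove by transfinite induction on $\alpha$ that $h(Q_\alpha \cap L(\bF_t) : L(\bF_t)) \leq 0$, from which the amenability of $Q_\alpha \cap L(\bF_t)$ will follow as in the proof of Theorem~\ref{intro thm: solidity}. Indeed, the random matrix resolution of the Peterson-Thom conjecture (via \cite{HayesPT}, \cite{PTProperty}, \cite{bordenave2023norm}) implies that any subalgebra $N \leq L(\bF_t)$ with $h(N : L(\bF_t)) \leq 0$ is amenable: its diffuse part, if non-trivial, contains a diffuse abelian subalgebra $A$, and the join permanence of 1-bounded entropy together with the uniqueness of the maximal amenable extension of $A$ forces the diffuse part to sit inside that maximal amenable extension, while the atomic part is hyperfinite.

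The two permanence properties I will need, both derivable from the material of Section~\ref{sec: 1 bounded entropy}, are, first, the intersection form of anti-coarse permanence: for any $N \leq L(\bF_t)^\omega$ and any $X \subseteq \cH_{\textnormal{anti-c}}(N \leq L(\bF_t)^\omega)$,
\[
h(W^*(X,N) \cap L(\bF_t) : L(\bF_t)) \leq h(N \cap L(\bF_t) : L(\bF_t));
\]
and second, upward continuity of $h(\,\cdot\, : L(\bF_t))$ along SOT-closures of increasing chains of subalgebras of $L(\bF_t)^\omega$, which combined with the elementary identity $\overline{\bigcup_\beta N_\beta}^{SOT} \cap L(\bF_t) = \overline{\bigcup_\beta (N_\beta \cap L(\bF_t))}^{SOT}$ for increasing chains handles the limit step. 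Granting these, the induction is routine: at $\alpha = 0$, $Q \cap L(\bF_t)$ is amenable as a subalgebra of the amenable $Q$, so has entropy at most $0$; at a successor $\alpha$, apply the first property to $N = Q_{\alpha-1}$ and $X = X_\alpha$, using the inductive hypothesis; at a limit ordinal, apply the second property.

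For the Gamma stability corollary, given $Q \leq L(\bF_t)^\omega$ with $Q' \cap L(\bF_t)^\omega$ diffuse, pick any diffuse abelian subalgebra $A \subseteq Q' \cap L(\bF_t)^\omega$. Each unitary $u \in Q$ satisfies $uAu^* = A$, hence $u \in \cN^{wq}_{L(\bF_t)^\omega}(A) \subseteq \cH_{\textnormal{anti-c}}(A \leq L(\bF_t)^\omega)$. Apply the main statement with $Q_0 = A$ (diffuse amenable) and $X_1 = \cU(Q)$ to conclude that $W^*(Q,A) \cap L(\bF_t) \supseteq Q \cap L(\bF_t)$ is amenable.

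The main obstacle is establishing the first permanence property in the intersection/ultrapower form stated above. This is an ultrapower-aware version of the anti-coarse permanence of 1-bounded entropy from \cite{Hayes8}, and its proof requires carefully relating $N$-$N$ bimodular maps out of $L^2(L(\bF_t)^\omega)$ to $(N \cap L(\bF_t))$-$(N \cap L(\bF_t))$ bimodular maps out of $L^2(L(\bF_t))$, so that the microstates estimates controlling entropy under anti-coarse extensions can be applied at the level of $L(\bF_t)$ itself rather than the ultrapower.
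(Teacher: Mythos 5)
There is a genuine gap, and it lies exactly where you flagged your ``main obstacle.'' Your induction hypothesis $h(Q_{\alpha}\cap L(\bF_{t}):L(\bF_{t}))\leq 0$ is too weak to drive the successor step: the anti-coarse space $\cH_{\textnormal{anti-c}}(Q_{\alpha-1}\leq L(\bF_{t})^{\omega})$ depends on all of $Q_{\alpha-1}$, not merely on $Q_{\alpha-1}\cap L(\bF_{t})$ (which could even be trivial while $Q_{\alpha-1}$ is large), so no information about $h(Q_{\alpha-1}\cap L(\bF_{t}):L(\bF_{t}))$ can control $W^{*}(X_{\alpha},Q_{\alpha-1})$. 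Correspondingly, your proposed ``intersection form'' $h(W^{*}(X,N)\cap L(\bF_{t}):L(\bF_{t}))\leq h(N\cap L(\bF_{t}):L(\bF_{t}))$ does not follow from the listed permanence properties and has the wrong right-hand side: what the properties actually give is $h(W^{*}(X,N)\cap L(\bF_{t}):L(\bF_{t}))\leq h(N:L(\bF_{t})^{\omega})$, via P\ref{I:omegafying in the second variable}, P\ref{I:monotonicity of 1 bounded entropy}, and P\ref{item:preservation under normalizers} applied with ambient algebra $L(\bF_{t})^{\omega}$. Your limit step has a parallel problem: the identity $\overline{\bigcup_{\beta}N_{\beta}}^{SOT}\cap L(\bF_{t})=\overline{\bigcup_{\beta}(N_{\beta}\cap L(\bF_{t}))}^{SOT}$ is not obvious (an element of the left side is approximated by elements of $\bigcup_{\beta}N_{\beta}$ that need not lie in $L(\bF_{t})$), and it is not needed.

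The repair is to strengthen the induction hypothesis to $h(Q_{\alpha}:L(\bF_{t})^{\omega})\leq 0$ and run the entire transfinite induction inside the ultrapower: the base case holds by P\ref{I:hyperfinite has 1-bdd ent zero}, the successor step by P\ref{item:preservation under normalizers} together with P\ref{I:monotonicity of 1 bounded entropy} (all in the presence of $L(\bF_{t})^{\omega}$), and the limit step by P\ref{I:increasing limits of 1bdd ent first variable}. Only then does one descend: for each fixed $\alpha$, $h(Q_{\alpha}\cap L(\bF_{t}):L(\bF_{t}))=h(Q_{\alpha}\cap L(\bF_{t}):L(\bF_{t})^{\omega})\leq h(Q_{\alpha}:L(\bF_{t})^{\omega})\leq 0$ by P\ref{I:omegafying in the second variable} and P\ref{I:monotonicity of 1 bounded entropy}, and amenability follows from Corollary \ref{Pinsker algebras of interpolated fgf}. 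No comparison of bimodular maps over $L^{2}(L(\bF_{t})^{\omega})$ with those over $L^{2}(L(\bF_{t}))$ is required. Your derivation of the Gamma stability statement from the main result is correct and matches the intended argument.
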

This  recovers the previous Gamma stability results from \cite{CyrilAOP} (recovering also instances of the results of \cite{dkep2022properly, ding2023structure}).

%The resolution of the Peterson-Thom conjecture via $1$-bounded entropy theory thus provides a generalization of the strong solidity theorem where the normalizer is replaced by various weakenings of the normalization.
The case of the weak intertwining space itself leads to a dichotomy in terms of Popa's deformation/ridigity theory for maximal amenable subalgebras of free group factors. We recall the fundamental notion of intertwining introduced in \cite{PopaL2Betti, PopaStrongRigidity}.  If $M$ is a finite von Neumann algebra, and $P,Q\leq M$ we say that \emph{a corner of $Q$ intertwines into $P$ inside of $M$} and write $Q\prec P$ if there are nonzero projections $f\in Q$, $e\in P$, a unital $*$-homomorphism $\Theta\colon fQf\to ePe$ and a nonzero partial isometry $v\in M$ so that:
\begin{itemize}
    \item $xv=v\Theta(x)$ for all $x\in fQf$,
    \item $vv^{*}\in (fQf)'\cap fMf$,
    \item $v^{*}v\in \Theta(fqf)'\cap eMe$.
\end{itemize}
This can be thought of intuitively as ``$Q$ can be unitarily embedded into $P$, after cutting by a projection.''

\begin{thm}\label{thm:Pinkser dichotomy intro}
Fix $t>1$, and let $Q,P$ be maximal amenable subalgebras of $L(\bF_{t})$. Then exactly one of the following occurs:
\begin{enumerate}
    \item either there are nonzero projections $e\in Q,f\in P$ and a unitary $u\in L(\bF_{t})$ so that $u^{*}(ePe)u=fQf$, or \label{item:option 1 intro}
    \item for any diffuse $Q_{0}\leq Q$ we have that $Q_{0}\nprec P$.
    \label{item:option 2 intro}
\end{enumerate}
In particular, if $Q,P$ are hyperfinite subfactors of $L(\bF_{t})$ that are maximal amenable subalgebras in $L(\bF_{t})$, then either they are unitarily conjugate or no corner of any diffuse subalgebra of one can be intertwined into the other inside of $L(\bF_{t})$.
\end{thm}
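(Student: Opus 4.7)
The plan is to prove the contrapositive: if option~(\ref{item:option 2 intro}) fails, then (\ref{item:option 1 intro}) holds. Mutual exclusivity is immediate, since a unitary conjugacy $u^*(ePe)u = fQf$ as in (\ref{item:option 1 intro}) would exhibit $fQf \prec P$ via the embedding $x \mapsto uxu^*$, contradicting (\ref{item:option 2 intro}) with the choice $Q_0 := fQf$.

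Suppose then that there is a diffuse $Q_0 \leq Q$ with $Q_0 \prec P$. By Popa's intertwining-by-bimodules theorem, after standard reductions I obtain projections $f_0 \in Q_0$ and $e_0 \in P$, a nonzero partial isometry $v \in M := L(\bF_t)$, and an injective unital normal $*$-homomorphism $\theta \colon f_0 Q_0 f_0 \to e_0 P e_0$ with diffuse image satisfying $xv = v\theta(x)$ for all $x \in f_0 Q_0 f_0$. Set $p := vv^*$ and $q := v^*v$.

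The crucial step is to show that $p \in Q$ and $q \in P$. Because $p \in (f_0 Q_0 f_0)' \cap f_0 M f_0$, the algebra $W^*(f_0 Q_0 f_0, p)$ is amenable. Since $f_0 \in Q$, the corner $f_0 Q f_0$ is maximal amenable inside $f_0 M f_0$, which is itself an interpolated free group factor and therefore satisfies Peterson-Thom by Section~\ref{sec: Pinsker algebra interpolate}. Uniqueness of the maximal amenable extension of $f_0 Q_0 f_0$ in $f_0 M f_0$ then forces $W^*(f_0 Q_0 f_0, p) \subseteq f_0 Q f_0 \subseteq Q$, whence $p \in Q$. A symmetric argument applied to the diffuse amenable subalgebra $\theta(f_0 Q_0 f_0) \subseteq e_0 P e_0$ and the commuting projection $q$ gives $q \in P$.

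With $p \in Q$ and $q \in P$ in hand, the map $\Ad(v) \colon qMq \to pMp$ is a $*$-isomorphism, and the intertwining relation $xv = v\theta(x)$ yields $v \theta(f_0 Q_0 f_0) v^* = pQ_0 p$. Both $pQp$ and $v(qPq)v^*$ are therefore maximal amenable subalgebras of $pMp$ containing the diffuse algebra $pQ_0 p$, and a further application of Peterson-Thom (now inside the interpolated free group factor $pMp$) forces $v(qPq)v^* = pQp$. Since $\tau(p) = \tau(q)$, I extend $v$ to a unitary $u = v + w \in M$ by adjoining a partial isometry $w$ with $ww^* = 1 - p$ and $w^*w = 1 - q$; a direct calculation using the orthogonality relations $wq = 0 = qw^*$ then gives $u(qPq)u^* = pQp$, establishing option~(\ref{item:option 1 intro}) with $e = q$, $f = p$ and unitary $u^*$. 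The ``in particular'' statement follows from a standard $\mathrm{II}_1$ factor argument upgrading corner-level unitary conjugacy of subfactors to full unitary conjugacy. The main obstacle I anticipate is ensuring the diffuseness of $pQ_0 p$ needed for the last Peterson-Thom application; this is achieved by arranging the intertwining data so that $p$ interacts well with a chosen diffuse abelian subalgebra of $Q_0$, a routine technical point in Popa's framework.
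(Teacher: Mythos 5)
Your proof is correct, and it follows the same skeleton as the paper's argument (which deduces the theorem from the general Pinsker-algebra dichotomy, Theorem \ref{thm: Pinsker dichotomy}, via the classification in Theorem \ref{thm:classification of Pinsker algebras}): extract Popa's intertwining data, show $vv^{*}\in Q$ and $v^{*}v\in P$, identify the conjugated corners by an absorption/maximality argument, and extend $v$ to a unitary using that $M$ is a II$_1$ factor. The difference is in the machinery. The paper proves the dichotomy for Pinsker algebras in an \emph{arbitrary} tracial von Neumann algebra, so it runs the absorption steps through the $1$-bounded entropy permanence properties (P\ref{I:subadditivity of 1 bdd ent}, P\ref{item:preservation under normalizers}) and Proposition \ref{prop: amplifications of Pinsker}, and it outsources the step $vv^{*}\in Q$, $v^{*}v\in P$ to the argument of \cite[Theorem 6.8]{Pineapple}. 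You instead work directly inside $L(\bF_{t})$ and invoke the Peterson--Thom absorption property of maximal amenable subalgebras, together with the fact (from Section \ref{sec: Pinsker algebra interpolate}) that corners such as $f_{0}Qf_{0}\leq f_{0}Mf_{0}$ are again maximal amenable in an interpolated free group factor. This buys a self-contained treatment of the $vv^{*}\in Q$ step: since $p=vv^{*}$ commutes with $f_{0}Q_{0}f_{0}$, one has $W^{*}(f_{0}Q_{0}f_{0},p)=(f_{0}Q_{0}f_{0})p+(f_{0}Q_{0}f_{0})(f_{0}-p)$, a quotient of $f_{0}Q_{0}f_{0}\oplus f_{0}Q_{0}f_{0}$ and hence amenable, so absorption gives $p\in f_{0}Qf_{0}$. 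The cost is generality: your argument does not recover Theorem \ref{thm: Pinsker dichotomy} for Pinsker algebras outside the amenable setting.

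Two small corrections. In your exclusivity paragraph the witness should be $Q_{0}:=Q$ (which is diffuse), not $Q_{0}:=fQf$, since $fQf$ is not a subalgebra of $Q$; the corner $fQf$ then plays the role of the corner appearing in the definition of $Q\prec P$. Also, the diffuseness of $pQ_{0}p$ that you flag as a potential obstacle is automatic: $p$ commutes with $f_{0}Q_{0}f_{0}$ and $p\leq f_{0}$, so $pQ_{0}p=\{xp: x\in f_{0}Q_{0}f_{0}\}$ is the image of the diffuse algebra $f_{0}Q_{0}f_{0}$ under a normal $*$-homomorphism, hence isomorphic to a central cut-down of it and therefore diffuse; no special arrangement of the intertwining data is needed.
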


So given any pair $P,Q$ of maximal amenable subalgebras of $L(\bF_{t})$, they either have unitarily conjugate corners or no diffuse subalgebra of $Q$ can be ``essentially conjugated'' into $P$. The reader should compare this with
\cite[Theorem A.1]{PopaL2Betti} where a similar result is shown for MASAs in \emph{any} tracial von Neumann algebra.

We close with an application to embeddings into matrix ultraproducts.

\begin{cor}
Let $t > 1$ and let $N \leq L(\bF_{t})$ be a nonamenable subfactor. Then there is a free ultrafilter $\omega$ and an embedding $\iota\colon N\to \prod_{k\to\omega}M_{k}(\bC)$ with $\iota(N)'\cap \prod_{k\to\omega}M_{k}(\bC)=\bC 1$.
\end{cor}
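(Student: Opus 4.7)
The plan is to combine the $1$-bounded entropy framework with a classical matrix microstate construction, using the Peterson--Thom-type results established earlier in the paper as the essential input. First, $N$ is Connes-embeddable (being a subalgebra of $L(\bF_t)$) and non-amenable, so by Jung's characterization of hyperfiniteness via $1$-bounded entropy we have $h(N) > 0$. This is the quantitative entropy input that will power the rest of the argument.

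Next, I would fix a free ultrafilter $\omega$ on $\bN$ and start from an embedding $\rho\colon L(\bF_t) \to \prod_{k\to\omega} M_k(\bC)$ coming from asymptotically free Gaussian matrix microstates (Voiculescu); by standard strong convergence results, $\rho(L(\bF_t))' \cap \prod_{k\to\omega} M_k(\bC) = \bC$. The desired embedding $\iota$ of $N$ is then obtained as a suitable modification of $\rho|_N$: heuristically, the positivity of $h(N)$ provides enough ``room'' in the space of matrix microstates of $N$ to select one whose centralizer in $\prod_{k\to\omega} M_k(\bC)$ collapses to scalars.

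To make this precise I would argue by contradiction. Suppose that every embedding of $N$ into the matrix ultraproduct has a nontrivial relative commutant. One can then extract a diffuse amenable subalgebra of $\iota(N)' \cap \prod_{k\to\omega} M_k(\bC)$, and via Popa's intertwining-by-bimodules together with the coarseness property (Theorem \ref{thm:coarseness intro}) applied in the matrix ultraproduct, this diffuse amenable commutant pulls back to a diffuse amenable subalgebra $Q \leq L(\bF_t)^\omega$ whose relative commutant in $L(\bF_t)^\omega$ contains a nonamenable copy of $N$. But then the Gamma-stability property (the ultraproduct version of Theorem \ref{intro thm: solidity}) forces $N \subseteq Q' \cap L(\bF_t)$ to be amenable, contradicting the hypothesis on $N$.

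The main obstacle is precisely the ``pullback'' step: converting the nontrivial relative commutant of $\iota(N)$ inside $\prod_{k\to\omega} M_k(\bC)$ into a diffuse amenable subalgebra of $L(\bF_t)^\omega$ commuting with $N$. This passage depends on the strong convergence of the matrix models (the same random matrix input as in Belinschi--Capitaine and Bordenave--Collins that drives the Peterson--Thom resolution) to ensure that the relative commutant in the matrix ultraproduct is actually ``detected'' inside the ambient $L(\bF_t)^\omega$-structure, and this is where the $1$-bounded entropy machinery of the paper is indispensable.
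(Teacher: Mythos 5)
There is a genuine gap at the very first step. You claim that $h(N)>0$ follows from $N$ being Connes-embeddable and nonamenable ``by Jung's characterization of hyperfiniteness via $1$-bounded entropy.'' No such characterization exists: amenability implies $h\leq 0$ (Property P4), but the converse fails badly. For instance $\cR\,\overline{\otimes}\,L(\bF_{2})$ is nonamenable and Connes-embeddable yet has $h=0$, as do nonprime factors, factors with Cartan subalgebras, and property Gamma factors. The inequality $h(N)>0$ for a nonamenable $N\leq L(\bF_{t})$ is not a soft fact about $N$ alone; it is exactly where the resolution of the Peterson--Thom conjecture enters. The correct derivation is: Corollary \ref{Pinsker algebras of interpolated fgf} (which rests on \cite{HayesPT} together with \cite{PTProperty,bordenave2023norm}) gives $h(N:L(\bF_{t}))>0$ for nonamenable $N\leq L(\bF_{t})$, and then monotonicity (Property P3) gives $h(N)\geq h(N:L(\bF_{t}))>0$. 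Your justification, as written, would prove the (false) statement that every nonamenable Connes-embeddable factor satisfies $h>0$.

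The second half of your argument also does not go through. The paper concludes directly from $h(N)>0$ by citing \cite[Corollary 1.3]{Jekeltypes}, which is a standalone covering-number argument on microstate spaces: positive $1$-bounded entropy forces the existence of \emph{some} embedding into $\prod_{k\to\omega}M_{k}(\bC)$ with trivial relative commutant. Your proposed contradiction argument instead tries to pull a diffuse amenable subalgebra of $\iota(N)'\cap\prod_{k\to\omega}M_{k}(\bC)$ back into $L(\bF_{t})^{\omega}$ and then invoke Gamma stability. But the matrix ultraproduct is vastly larger than $L(\bF_{t})^{\omega}$, and elements commuting with $\iota(N)$ there have no reason to interact with the copy of $L(\bF_{t})^{\omega}$ at all; solidity and Gamma stability of $L(\bF_{t})$ constrain relative commutants \emph{inside} $L(\bF_{t})^{\omega}$, not inside an ambient matrix ultraproduct. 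You correctly identify this ``pullback step'' as the main obstacle, but it is not one that coarseness or Theorem \ref{intro thm: solidity} can overcome; the entropy-counting mechanism of \cite{Jekeltypes} is a genuinely different (and necessary) ingredient, and it produces a carefully chosen embedding rather than a modification of a fixed strongly convergent one.
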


The corollary is proved as follows.  The results of \cite{HayesPT} and \cite{PTProperty, bordenave2023norm} imply that every nonamenable $N \leq L(\mathbb{F}_t)$ satisfies $h(N: L(\mathbb{F}_t)) > 0$ and hence $h(N) > 0$; see Corollary \ref{Pinsker algebras of interpolated fgf} for details.  Work of the second-named author shows that if $h(N) > 0$, then there exists some embedding of $N$ into a matrix ultraproduct which has trivial relative commutant \cite[Corollary 1.3, see the statement in corrigendum]{Jekeltypes}.

%Using that $\cR\cong M_{k}(\bC)\cong \cR$ along with %\cite[Lemma 6.5]{BROWN20111665} we can find an embedding
%\[j\colon N\to \prod_{k\to\omega}M_{k}(\bC)\cong \cR\cong %\cR^{\omega}\]
%with $j(N)'\cap \cR^{\omega}$ being a factor (in factor this commutant is the image of $\prod_{k\to\omega}1\cong \cR$ under the isomorphism $\prod_{k\to\omega}M_{k}(\bC)\cong \cR\cong \cR^{\omega}$). It is a well-known open problem due to Popa if every embeddable $\textrm{II}_{1}$-factor admits an embedding into $\cR^{\omega}$ with factorial  commutant. Moreover by the amplification argument used in \cite[Theorem 5.7]{ScottSri2019ultraproduct} and \cite[Proposition 0.1]{} one gets the following: For any sequence of II$_1$ factors $M_i$, $i\in \mathbb{N}$ and any nonamenable subfactor $N$ of $L(\mathbb{F}_t)$  there exists an embedding $\pi:N\to \prod_{i\to \omega} M_i$ such that the commutant $\pi(N)'\cap \prod_{i\to \omega} M_i$ is a II$_1$ factor. Indeed one can write for any $i$,  $M_i= \mathbb{M}_i(\mathbb{C})\otimes M_i^{1/(i^2)}$ and follow the same embedding as above along the matrix ultraproduct.

%Past results:

%-Dykema-Murkhejee. Others?

%-Motivation via the question on classification of abelian maximal amenable subalgebras of free group factors.

\textbf{A comment on proofs.} We give a few remarks on how $1$-bounded entropy is used in the paper. The first is that for any tracial von Neumann algebra, the $1$-bounded entropy leads to a natural class of subalgebras called \emph{Pinsker algebras}. To say that $P\leq M$ is Pinsker means that $P$ is a maximal subalgebra such that $h(P:M)\leq 0$, where $h(Q:N)$ for $Q\leq N$ is the  $1$-bounded entropy of $Q$ in the presence of $N$ defined in \cite{Hayes8}. Intuitively this means two things: firstly that $P$ has ``very few'' embeddings into ultraproducts of matrices which extend to $M$,  and that $P$ is maximal with respect to inclusion among algebras which have ``very few'' embeddings into ultraproducts of matrices which extend to $M$. We refer to Section
\ref{sec:Pinsker algebra}
for the precise definition of being a Pinsker algebra. Given a diffuse subalgebra $Q\leq M$ with $h(Q:M)\leq 0$, general properties of $1$-bounded entropy show that there is a unique Pinsker algebra $P\leq M$ with $Q\subseteq P$.  In the context of the Peterson-Thom conjecture, the unique maximal amenable extension of a diffuse subalgebra can be identified exactly as the Pinsker algebra containing this subalgebra. Thus the $1$-bounded entropy can not only be used to solve the Peterson-Thom conjecture, but also naturally identify the maximal amenable extensions of a given amenable subalgebra.

The second remark is that, as was previously mentioned, the $1$-bounded entropy enjoys several permanence properties which we will list in Section \ref{sec: 1 bounded entropy}. For the proofs of all the theorems mentioned so far, we will only use these permanence properties as well as the fact that the results of \cite{HayesPT, PTProperty, bordenave2023norm} show that the Pinsker algebras in interpolated free group factors are precisely the maximal amenable subalgebras. In particular, we never have to work with the precise definition of $1$-bounded entropy, just these permanence properties. This tells us that the axiomatic treatment of $1$-bounded entropy via these general properties can be used to deduce many interesting and new results on the structure of free group factors.

\textbf{Organization of the paper.} In Section \ref{sec: singular subspace} we recall the anti-coarse space and expand the results in \cite[Proposition 3.2]{Hayes8} showing that it contains several other weakenings of the normalizer. In Section \ref{sec: 1 bounded entropy}, we list the  permanence properties of $1$-bounded entropy we will use in this paper. For the proofs of all of our applications of \cite{HayesPT} and \cite{PTProperty, bordenave2023norm} we will only use these permanence properties and not the precise definition of $1$-bounded entropy, so these properties give an axiomatic approach to most of the proofs in this paper. In Section \ref{sec:Pinsker algebra}, we recall the notion of Pinsker algebras for $1$-bounded entropy introduced in \cite{FreePinsker} and recast the results of \cite{PTProperty, bordenave2023norm} in these terms. In Section  \ref{sec: Pinsker algebra interpolate}, we will explain how these results apply not just to maximal amenable subalgebras of free group factors, but also to those of interpolated free group factors. Section \ref{sec:orthocomp}
contains the proof of the coarseness conjecture, as well as applications to maximal abelian subalgebras of free group factors. In Section \ref{sec: omega-mega solidity} we give several generalizations of Ozawa-Popa's celebrated strong solidity theorem.  In Section \ref{sec:intertwine} we give a dichotomy for intertwining between maximal amenable subalgebras of interpolated free group factors (and more generally for Pinsker algebras in any tracial von Neumann algebra).
% Finally, in the appendix we use the noncommutative functional calculus of the second named author to give a short proof of the invariance of $1$-bounded entropy.
% This paper is intended to be accessible to experts in operator algebras who are not necessarily experts in free entropy dimension/$1$-bounded entropy theory. As such, we have aimed to make the proofs as quick from the main properties of $1$-bounded entropy as possible.
% Because our goal is to make the paper accessible to operator algebraists who have not studied free entropy before, we include in the appendix the definition of $1$-bounded entropy and a simplified proof of that it is independent of the choice of generators.
In the appendix, we give the definition of $1$-bounded entropy and prove that it is independent of the choice of generators (a fact first proved implicitly in \cite[Theorem 3.2]{JungSB} and later explicitly in \cite[Theorem A.9]{Hayes8}). Here we give a significant conceptual and technical simplification of the proof using the noncommutative functional calculus due to the second-named author.
% The definition of $1$-bounded entropy was implicitly given in \cite{JungSB}, and explicitly in \cite{Hayes8}.  A priori, its definition relies on a choice of generators for a tracial von Neumann algebra  and \cite[Theorem A.9]{Hayes8} contains a very technical, explicit proof of the fact that the $1$-bounded entropy does not depend upon the choice of generating set (this is also implicitly shown in the proof of \cite[Theorem 3.2]{JungSB}). In the years since \cite{JungSB, Hayes8} new tools have been developed to more efficiently understand and exploit microstates approximations of von Neumann algebras, including the noncommutative functional calculus due to the second-named author.
% Thus, we have chosen to include an appendix revisiting the proof using this noncommutative functional calculus that the $1$-bounded entropy is an invariant independent of the choice of generators, which is a significant conceptual and technical simplification of the original argument in \cite{Hayes8}.
% We recommend anyone wishing to understand the invariance of $1$-bounded entropy to read this proof instead of the one given in \cite{Hayes8}.

 \textbf{Acknowledgements.} We thank Enes Kurt, Sorin Popa, Thomas Sinclair, and Stefaan Vaes for providing many helpful comments that improved the exposition. 

%\textbf{Acknowledgements.}

% Theorem \ref{thm:classification of Pinsker algebras}.

\section{Preliminaries}\label{back}

\subsection{The anti-coarse space}\label{sec: singular subspace}

For an inclusion $N\leq M$ of tracial von Neumann algebras, we let
\[\cH_{\textnormal{anti-c}}(N\leq M)=\bigcap_{T\in \Hom_{N-N}(L^{2}(M),L^{2}(N)\otimes L^{2}(N))}\ker(T).\]
Where $\Hom_{N-N}(L^{2}(M),L^{2}(N)\otimes L^{2}(N)$ is the space of bounded, linear, $N$-$N$ bimodular maps $T\colon L^{2}(M)\to L^{2}(N)\otimes L^{2}(N)$. It is shown in \cite[Proposition 3.2]{Hayes8} that this contains the following generalizations of the normalizer of $N\leq M$
\[q^{1}\mathcal{N}_{M}(N)=\left\{x\in M:\mbox{ there exists $x_{1},\cdots,x_{n}\in M$ so that } xN\subseteq \sum_{j=1}^{n}Nx_{j}\right\},\]
\[\mathcal{N}^{wq}_{M}(N)=\{u\in \mathcal{U}(M):uNu^{*}\cap N\textnormal{ is diffuse}\}.\]

We recall the following definition, due to Popa \cite{PopaMC,PopaWeakInter} (see also \cite{PopaCohomologyOE,IPP, GalatanPopa} for related concepts).
\begin{defn}[Definition 2.6.1 of \cite{PopaWeakInter}] \label{defn:weak intertwining space}
Let $(M,\tau)$ be a tracial von Neumann algebra. For $Q,P\leq M$ diffuse, we define the \emph{intertwining space from $Q$ to $P$ inside $M$}, denoted $I_{M}(Q,P)$, to be the set of $\xi\in L^{2}(M)$ so that
\[\overline{\Span\{a\xi b: a\in Q,b\in P\}}^{\|\cdot\|_{2}}\]
has finite dimension as a right $P$-module. We define the \emph{weak intertwining space from $Q$ to $P$ inside $M$} by
\[wI_{M}(Q,P)=\bigcup_{Q_{0}\leq Q \textnormal{ diffuse}}I_{M}(Q_{0},P).\]

\end{defn}

The following is a well-known result due to Popa \cite[Proposition 2.6.3]{PopaWeakInter}, but we include the proof for completeness.

\begin{prop}\label{prop:singular subspaces contain weak intertwiners}
Let $(M,\tau)$ be a tracial von Neumann algebra. For $Q\leq M$ diffuse we have
\[wI_{M}(Q,Q)\subseteq \cH_{\textnormal{anti-c}}(Q\leq M).\]
\end{prop}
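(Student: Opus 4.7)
The plan is to unwind the definitions, show that applying $T$ to the intertwining witness $K$ yields a finite sub-bimodule of the coarse bimodule, and then invoke Popa's intertwining-by-bimodules theorem to kill it. Concretely, fix $\xi \in wI_M(Q,Q)$ and $T \in \Hom_{Q-Q}(L^2(M), L^2(Q) \otimes L^2(Q))$; the goal is $T\xi = 0$. By definition of $wI_M$ there is a diffuse $Q_0 \leq Q$ such that
\[K := \overline{\Span\{a\xi b : a \in Q_0,\, b \in Q\}}^{\|\cdot\|_2}\]
is finitely generated as a right $Q$-module, say $K = \overline{\sum_{j=1}^n \eta_j Q}^{\|\cdot\|_2}$; it therefore suffices to show $\overline{T(K)}^{\|\cdot\|_2}=0$.

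First, I would check that $\overline{T(K)}^{\|\cdot\|_2}$ is a $Q_0$-$Q$ subbimodule of the coarse bimodule $L^2(Q) \otimes L^2(Q)$ that is finitely generated as a right $Q$-module. The finite generation comes from continuity and right-$Q$-modularity of $T$, which give $\overline{T(K)} = \overline{\sum_j T(\eta_j) Q}^{\|\cdot\|_2}$. The left-$Q_0$-invariance comes from left-$Q$-modularity of $T$ applied to $Q_0 \leq Q$, together with the left-$Q_0$-invariance of $K$ itself.

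The hard step is showing that any such subbimodule of the coarse bimodule must be zero. My plan is to identify $L^2(Q) \otimes L^2(Q)$ with the standard form of $M' := Q \overline{\otimes} Q^{\mathrm{op}}$, under which the coarse-bimodule action becomes the left action of the commuting pair $Q_0 \otimes 1$ and $1 \otimes Q^{\mathrm{op}}$. By Popa's intertwining-by-bimodules theorem, the existence of a nonzero such subbimodule is equivalent to $Q_0 \otimes 1 \prec_{M'} 1 \otimes Q^{\mathrm{op}}$, which I would rule out by taking $u_n \in \cU(Q_0)$ tending to $0$ weakly and computing on elementary tensors
\[E_{1 \otimes Q^{\mathrm{op}}}\bigl((a \otimes b^{\mathrm{op}})(u_n \otimes 1)(c \otimes d^{\mathrm{op}})\bigr) = \tau(au_nc)\,(1 \otimes (db)^{\mathrm{op}}),\]
whose $\|\cdot\|_2$-norm is $|\tau(au_nc)|\,\|db\|_2 \to 0$; a routine Kaplansky-style approximation extends this to arbitrary $x,y\in M'$, so Popa's criterion forbids the intertwining. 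Hence $\overline{T(K)}=0$, and in particular $T\xi=0$; since $T$ was arbitrary, $\xi \in \cH_{\textnormal{anti-c}}(Q \leq M)$.

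The main subtlety I anticipate is the precise dictionary between the coarse-bimodule structure on $L^2(Q) \otimes L^2(Q)$ and the standard form of $M'$, since the ``right-$Q$-action'' in the coarse sense corresponds to \emph{left} multiplication by $1 \otimes Q^{\mathrm{op}}$ on $L^2(M')$ rather than the standard-form right action; some care is needed to apply Popa's theorem (which is usually phrased with right modules) correctly. An equivalent route avoiding this bookkeeping is to verify directly that the coarse bimodule is mixing over any diffuse $Q_0 \leq Q$ (i.e., $\langle u_n \zeta, \zeta' u_n\rangle \to 0$ for weakly null $u_n \in \cU(Q_0)$, which is immediate on elementary tensors by the computation above), and then invoke the standard fact that a mixing bimodule admits no nonzero finitely-generated right-$Q$-submodule that is invariant under a diffuse subalgebra on the left.
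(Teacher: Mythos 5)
Your proposal is correct, and at its core it runs on the same two ingredients as the paper's proof: the mixing property of the coarse $Q$-$Q$ bimodule over a diffuse subalgebra $Q_0$ (verified on elementary tensors exactly as you do), and Popa's intertwining-by-bimodules criterion. The packaging is different, though, in a way worth noting. The paper argues by duality: it uses the folklore fact that $\cH_{\textnormal{anti-c}}(Q\leq M)^{\perp}$ embeds into the coarse bimodule as a $Q$-$Q$ bimodule, deduces that $\|\bE_{Q}(\eta^{*}u_{n}\xi)\|_{1}\to 0$ uniformly for $\xi,\eta$ in that orthocomplement, and then applies Popa's characterization of $I_{M}(Q_{0},Q)$ to conclude $\cH_{\textnormal{anti-c}}(Q\leq M)^{\perp}\subseteq I_{M}(Q_{0},Q)^{\perp}$. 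You instead argue element-wise: push the finite-dimensional $Q_{0}$-$Q$ bimodule generated by $\xi$ forward under an arbitrary bimodular $T$ and kill its image inside the coarse bimodule. Your route avoids invoking the embedding of the orthocomplement (at the cost of re-proving, inside $L^{2}(Q)\otimes L^{2}(Q)$, the "no finite sub-bimodule" statement that the paper outsources to Popa's Theorem 1.3.2); the paper's route gets the stronger orthogonality statement $wI_{M}(Q,Q)\perp\cH_{\textnormal{anti-c}}(Q\leq M)^{\perp}$ in one stroke.

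Two small points. First, "finite dimension as a right $Q$-module" does not literally imply "finitely generated" when $Q$ is not a factor (a module of finite $Q$-dimension can have unbounded multiplicity function over the center), so you should not write $K=\overline{\sum_{j}\eta_{j}Q}$; but this is harmless, since a bounded right-$Q$-modular map with dense range cannot increase the right $Q$-dimension, so $\dim_{Q}\overline{T(K)}\leq\dim_{Q}K<\infty$ directly, and Popa's theorem only needs finite dimension. Second, the bookkeeping issue you flag with $Q\mathbin{\overline{\otimes}}Q^{\mathrm{op}}$ dissolves if you instead identify $L^{2}(Q)\otimes L^{2}(Q)$ with $L^{2}(Q\mathbin{\overline{\otimes}}Q)$, letting $Q$ act on the left via $a\mapsto a\otimes 1$ and on the right via $b\mapsto 1\otimes b$ acting by right multiplication; then a $Q_{0}$-$Q$ sub-bimodule of finite right dimension is exactly a witness for $Q_{0}\otimes 1\prec_{Q\overline{\otimes}Q}1\otimes Q$, which your conditional expectation computation rules out with no opposite algebras needed.
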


\begin{proof}
Fix $Q_{0}\leq Q$ diffuse. It suffices to show that
\[(I_{M}(Q_{0},Q))^{\perp}\supseteq \cH_{\textnormal{anti-c}}(Q\leq M)^{\perp}.\]
It is a folklore result that $\cH_{\textnormal{anti-c}}(Q\leq M)^{\perp}$ can be embedded into an infinite direct sum of $L^{2}(Q)\otimes L^{2}(Q)$ as a $Q$-$Q$ bimodule (see e.g. \cite[Proposition 3.3]{Hayes8} for a complete proof). Since $Q_{0}$ is diffuse, we can find a sequence $u_{n}\in \mathcal{U}(Q_{0})$ which tend to zero weakly. We leave it as an exercise to check that for all $\xi,\eta\in L^{2}(Q)\otimes L^{2}(Q)$ we have
\[\lim_{n\to\infty}\sup_{y\in Q:\|y\|\leq 1 }|\ip{u_{n}\xi y,\eta}|=0=\lim_{n\to\infty}\sup_{y\in Q:\|y\|\leq 1 }|\ip{y\xi u_{n},\eta}|,\]
(to prove this one can, for instance, check it on simple tensors and then conclude using linearity and density). Since $\cH_{\textnormal{anti-c}}(Q\leq M)^{\perp}$ embeds into an infinite direct sum of $L^{2}(Q)\otimes L^{2}(Q)$ as a $Q$-$Q$ bimodule, it follows that for all $\xi,\eta\in \cH_{\textnormal{anti-c}}(Q\leq M)^{\perp}$ we have
\[\lim_{n\to\infty}\sup_{y\in Q:\|y\|\leq 1 }|\ip{u_{n}\xi y,\eta}|=0=\lim_{n\to\infty}\sup_{y\in Q:\|y\|\leq 1 }|\ip{y\xi u_{n},\eta}|.\]
Since
\[\sup_{y\in Q:\|y\|\leq 1 }|\ip{u_{n}\xi y,\eta}|=\|\bE_{Q}(\eta^{*}u_{n}\xi)\|_{1},\]
we have
\[0=\lim_{n\to\infty}\|\bE_{Q}(\eta^{*}u_{n}\xi)\|_{1}.\]
So \cite[Theorem 1.3.2]{PopaInterwineSpace} implies that $\cH_{\textnormal{anti-c}}(Q\leq M)^{\perp} \subseteq I_{M}(Q_{0},Q)^{\perp}$, as desired.
\end{proof}

To further illustrate the generality of the anti-coarse space, we show that it contains the following very weak normalizer of $(N\subseteq M)$:
\[\mathcal{N}^{vwq}_{M}(N)= \{u\in \mathcal{U}(M): \text{there exists } v\in \mathcal{U}(M) \text{ so that } uNv\cap N\\ \text{ is diffuse.}\}\]
Here by ``diffuse'' we mean that $uNv\cap N$ contains a sequence $(u_n)_{n \in \mathbb{N}}$ of unitaries with $u_n \to 0$ in WOT as $n \to \infty$.

\begin{prop}
Let $(M,\tau)$ be a tracial von Neumann algebra. For $N\leq M$ diffuse we have
\[\mathcal{N}^{vwq}_{M}(N)\subseteq \cH_{\textnormal{anti-c}}(N\leq M).\]
\end{prop}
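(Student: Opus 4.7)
The plan is to imitate the argument for Proposition~\ref{prop:singular subspaces contain weak intertwiners} but with a different algebraic identity adapted to the two-sided condition defining $\mathcal{N}^{vwq}_M(N)$. Given $u\in \mathcal{N}^{vwq}_M(N)$, pick $v\in \mathcal{U}(M)$ and a sequence of unitaries $u_n\in uNv\cap N$ with $u_n\to 0$ in WOT, and write $u_n=ua_nv$ with $a_n\in N$. Since $u,v,u_n$ are unitaries in $M$ one checks directly that $a_n\in \mathcal{U}(N)$, and since $a_n=u^*u_nv^*$ and left/right multiplication by fixed elements of $M$ is ultraweakly continuous, $a_n\to 0$ weakly as well.

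Fix $T\in \Hom_{N-N}(L^{2}(M),L^{2}(N)\otimes L^{2}(N))$; the goal is to show $T(u)=0$. The key observation is that rearranging the identity as $ua_n=u_nv^*$ places an element of $N$ (namely $a_n$ on the left-hand side and $u_n$ on the right-hand side) immediately adjacent to each of $u$ and $v^*$. Applying $T$, using right-bimodularity on the left and left-bimodularity on the right, yields
\[
T(u)\,a_n \;=\; u_n\,T(v^*).
\]

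To conclude, exploit that $a_n\in \mathcal{U}(N)$ acts isometrically on the right of the coarse bimodule $L^{2}(N)\otimes L^{2}(N)$, so
\[
\|T(u)\|_2^2=\|T(u)\,a_n\|_2^2=\ip{T(u)\,a_n,\,T(u)\,a_n}=\ip{u_n T(v^*),\,T(u)\,a_n},
\]
where the last step substitutes the identity. It suffices to prove the final inner product tends to $0$. On elementary tensors, $\ip{u_n(\zeta_1\otimes \zeta_2),\,(\eta_1\otimes\eta_2)a_n}$ factors as $\ip{u_n\zeta_1,\eta_1}_{L^2(N)}\cdot\ip{\zeta_2,\eta_2a_n}_{L^2(N)}$, and each factor tends to $0$: the relevant functionals on $N$ are of the form $\tau(\eta^*\cdot\xi)$ with $\xi\eta^*\in L^1(N)=N_*$, hence normal, and thus vanish on any weakly null sequence in $\mathcal{U}(N)$. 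A density and $3\varepsilon$ argument, using the uniform bound $|\ip{u_n\zeta,\eta a_n}|\leq \|\zeta\|_2\|\eta\|_2$, extends this to all $\zeta,\eta\in L^2(N)\otimes L^2(N)$. Thus $T(u)=0$, and $u\in \cH_{\textnormal{anti-c}}(N\leq M)$.

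The main obstacle is conceptual rather than technical: since $u$ and $v$ themselves do not lie in $N$, bimodularity cannot be applied across $T$ using the naive form $u_n=ua_nv$. The rearrangement $ua_n=u_nv^*$ is what allows bimodularity to be applied simultaneously on both sides, producing the clean relation between $T(u)$ and $T(v^*)$. The remainder is a routine coarse-bimodule computation parallel to the one in the proof of Proposition~\ref{prop:singular subspaces contain weak intertwiners}.
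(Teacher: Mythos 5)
Your argument is correct and is essentially the paper's own proof: the paper sets $w_n=u^*u_nv^*\in\mathcal{U}(N)$ (your $a_n$), uses bimodularity to get $\|T(u)\|_2^2=\ip{u_n^*T(u)w_n,T(v^*)}$ (the complex conjugate of your $\ip{u_nT(v^*),T(u)a_n}$), and kills the limit by the same mixing property of $L^2(N)\otimes L^2(N)$ against weakly null unitaries. No gaps.
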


\begin{proof}
The argument proceeds exactly as in \cite[proof of Proposition 3.2]{Hayes8} but we include it for reader's convenience. Let $u\in \mathcal{N}^{wqv}_{M}(N)$.
% It suffices to show that for every $u\in \mathcal{N}^{vwq}_{M}(N)$ and every bounded $N-N$ bimodular map $T:L^2(NuN)\to L^2(N)\otimes L^2(N)$ is $0$ (see  \cite[Lemma 2.3]{Hayes8}). Here
% \[L^{2}(NuN)=\overline{\Span\{xuy:x,y\in N\}}^{\cdot\|_{2}}.\]
% Extend $T$ to $L^{2}(M)$ by declaring it be zero on $L^{2}(M)\ominus L^{2}(NuN)$.
By definition, it suffices to show that for every $T\in \Hom_{N-N}(L^{2}(M),L^{2}(N)\otimes L^{2}(N))$ we have $T(u)=0$.
Let $v\in \mathcal{U}(M)$ and $u_n\in \mathcal{U}(N)\cap uNv$
% be as above from the definition.
be such that $u_{n}\to_{n\to\infty}^{WOT}0$.
Denote by $w_n=u^{*}u_nv^{*}\in \mathcal{U}(N)$ and observe that $w_n\to 0 $ in WOT.
% Then observe that
Then, using that $T$ is $N$-$N$ bimodular and that $u_n,w_n\in \mathcal{U}(N)$,
\begin{align*}
  \| T(u) \|_2^2= \|T(u)w_n \|_2^2=\langle T(u)w_n,T(u)w_n \rangle=\ip{T(u)w_n,T(uw_n)}&=\ip{T(u)w_n,T(u_n v^{*})}\\
  &=\langle u_n^{*}T(u)w_n,T(v^{*}) \rangle.
  \end{align*}
Since $u_{n}^{*}\to_{n\to\infty}^{WOT}0$ and $w_{n}\to_{n\to\infty}^{WOT}0$, it follows as in Proposition \ref{prop:singular subspaces contain weak intertwiners} that $\ip{u_n^{*}\xi w_n,\eta}\to_{n\to\infty}0$ for all $\xi,\eta\in L^{2}(N)\otimes L^{2}(N)$. Taking limits as $n \to\infty$ above thus shows that $T(u)=0$.
\end{proof}

\subsection{1-bounded entropy}\label{sec: 1 bounded entropy}

One of the main ideas going into the proof of the Peterson-Thom conjecture is the $1$-bounded entropy $h$ of a tracial von Neumann algebra, a numerical invariant which appeared implicitly Jung \cite{JungSB} and was made explicit by the first-named author in \cite{Hayes8}. We will need the more general notion of $1$-bounded entropy in the presence, which is defined for inclusions $N\leq M$ of tracial von Neumann algebras and is denoted $h(N:M)$.
For detailed expositions and recent work on this topic see \cite{FreePinsker, PropTS1B, L2BHJKE, Jekeltypes, elayavalli2023remarks, exoticCIKE, charlesworth2023strong}.

For the applications in this paper, we will not need to use the definition of $h$ directly, only the properties listed below.  We include the precise definition of $h$ in the appendix, along with a streamlined proof that the definition is independent of the choice of the generating sets for the von Neumann algebras. The name ``$1$-bounded entropy" derives from the following result, connecting the $1$-bounded entropy to Jung's strong $1$-boundedness \cite{JungSB}.

\begin{thm}[{See  \cite[Proposition A.16]{Hayes8}}] \label{thm:S1B in terms of 1-bounded entropy}
A tracial von Neumann algebra $M$ is strongly $1$-bounded in the sense of Jung \cite{JungSB} if and only if $M$ is finitely generated and diffuse and satisfies $h(M) < \infty$.
\end{thm}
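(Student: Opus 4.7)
The plan is to directly compare the two definitions: Jung's strong $1$-boundedness and finiteness of Hayes's $1$-bounded entropy $h(M)$ both measure the exponential-in-$k^2$ growth rate of covering numbers of the microstate spaces $\Gamma_R(X; m, k, \gamma)$ for a generating tuple $X$, differing only in the normalization convention (operator norm at scale $1/\ell$ vs.\ $2$-norm at scale $\varepsilon$) and in which function of the scale parameter one subtracts. The heart of the statement is therefore that these two normalizations are equivalent modulo routine estimates, granted the independence-of-generators result for $h$ which is proved in the appendix.

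First I would recall both definitions side by side. Jung's condition \cite{JungSB} requires $M$ to be diffuse and finitely generated, and demands existence of a finite generating tuple $X=(x_1,\dots,x_d)$ whose microstate covering numbers $K_{1/\ell}^{\|\cdot\|_\infty}(\Gamma_R(X;m,k,\gamma))$ satisfy a bound of the form $\exp(C k^2 \log \ell)$ uniformly in $\ell$, with $C$ controlled. On the other hand, the appendix defines
\[
h(X) \;=\; \sup_R\, \sup_{m,\gamma}\, \inf_{\varepsilon>0}\Bigl(\limsup_{k\to\infty} \tfrac{1}{k^2} \log K^{\|\cdot\|_2}_{\varepsilon}(\Gamma_R(X;m,k,\gamma)) \;+\; d \log \varepsilon\Bigr),
\]
(or an equivalent variant), and establishes that $h(X)$ depends only on $M$, justifying the notation $h(M)$.

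For the forward direction, suppose $M$ is strongly $1$-bounded in Jung's sense. Then $M$ is diffuse and finitely generated by definition, and one has a tuple $X$ satisfying Jung's operator-norm covering bound. Since $\|\cdot\|_2 \le \|\cdot\|_\infty$ on matrices of fixed trace, the $2$-norm covering numbers at scale $\varepsilon=1/\ell$ are bounded above by the operator-norm ones, and the Jung bound rearranges to give exactly a finite value for the quantity defining $h(X)$. Hence $h(M) = h(X) < \infty$.

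For the reverse direction, suppose $M$ is diffuse, finitely generated, and $h(M)<\infty$. Pick any finite generating tuple $X$; independence yields $h(X)<\infty$, so for some sufficiently small $\varepsilon_0$, the $2$-norm covering numbers grow no faster than $\exp(C k^2)\,\varepsilon^{-d k^2}$ uniformly in $k$. Converting to operator-norm covering at scale $1/\ell$ costs at most a factor of $\sqrt{k}$ in the radius (via $\|A\|_\infty \le \sqrt{k}\,\|A\|_2$ on $M_k$), and since the microstates are already norm-bounded by $R$ one can truncate without losing covering bounds; this yields Jung's condition on $X$, proving $M$ is strongly $1$-bounded.

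The main obstacle I expect is the careful bookkeeping of the two different normalization conventions and the operator-norm vs.\ $2$-norm conversion on $M_k(\mathbb{C})$. None of this is conceptually deep, but it requires tracking constants through the $R$, $m$, $\gamma$, $\varepsilon$, $\ell$, $k$ limits in the correct order so that the Jung bound and the finiteness of $h(X)$ really do correspond. The more substantive ingredient — independence of $h(X)$ from the generating tuple $X$ — is isolated into the appendix, and once that is in hand the equivalence above is a matter of unwinding definitions.
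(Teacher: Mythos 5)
There is a genuine gap here, and it starts with the definitions themselves. The paper does not prove this theorem (it cites \cite[Proposition A.16]{Hayes8}); the appendix only proves generator-independence of $h$. But judged on its own terms, your proposal misstates both sides of the equivalence, and the conversions you propose would not close the argument.

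First, Jung's strong $1$-boundedness is not an operator-norm covering condition. It is stated in terms of $\|\cdot\|_2$-covering (or packing) numbers of microstate spaces: the tuple $X$ is $1$-bounded if $\mathbb{K}_\eps(X)\le |\log\eps|+K$ for small $\eps$, and \emph{strongly} $1$-bounded if in addition some $x\in X$ has finite free entropy $\chi(x)>-\infty$. Second, the definition of $h$ used in this paper has no ``$+\,d\log\eps$'' term and is not an infimum over $\eps$; it is a supremum over $\eps$ of normalized \emph{orbital} covering numbers $K^{\orb}_{F,\eps}$, i.e.\ coverings by $\eps$-neighborhoods of unitary conjugation orbits. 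The coefficient of $|\log\eps|$ you would need in a non-orbital reformulation is $1$, not $d$ — that ``$1$'' is exactly the content of ``$1$-bounded,'' and with coefficient $d$ you are describing free entropy dimension $\delta_0$ rather than $h$. Consequently the real technical step is absent from your outline: one must relate orbital to plain $\|\cdot\|_2$-coverings via the metric entropy of $\mathcal{U}(k)$ (a single unitary orbit is covered by roughly $(C/\eps)^{k^2}$ balls of $\|\cdot\|_2$-radius $\eps$), which is what converts $h(X)<\infty$ into $\mathbb{K}_\eps(X)\le|\log\eps|+K$ and back. Your proposed substitute — passing between $\|\cdot\|_2$ and $\|\cdot\|_\infty$ coverings via $\|A\|_\infty\le\sqrt{k}\,\|A\|_2$ — degrades the radius by $\sqrt{k}$, which diverges as $k\to\infty$ and cannot produce a covering at a fixed scale $1/\ell$.

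Separately, you never engage with the finite-free-entropy element in Jung's definition, which is where ``diffuse'' enters the statement. In the forward direction, $\chi(x)>-\infty$ forces $W^*(x)$, hence $M$, to be diffuse. In the reverse direction, given $M$ diffuse and finitely generated with $h(M)<\infty$, you must exhibit a generating tuple that both is $1$-bounded and contains an element of finite free entropy — e.g.\ adjoin a self-adjoint element with absolutely continuous spectral distribution from a diffuse abelian subalgebra, and then invoke generator-independence to see the enlarged tuple is still $1$-bounded. Without this step the reverse implication does not yield strong $1$-boundedness in Jung's sense, only $1$-boundedness.
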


For this reason, we say that any tracial von Neumann algebra $(M,\tau)$ (even if $M$ is not finitely generated or diffuse) is \emph{strongly $1$-bounded} if $h(M)<+\infty$.

Let $(M,\tau)$ be a tracial von Neumann algebra. The $1$-bounded entropy in the presence enjoys the following properties.

\begin{list}{{\bf P\arabic{pcounter}:} ~ }{\usecounter{pcounter}}
\item $h(M)=h(M:M)$ for every  tracial von Neumann algebra $(M,\tau)$, \label{item:definition}
\item Suppose $N\leq M$. Then $h(N:M)\geq 0$ if  $M$ embeds into an ultrapower of $\mathcal{R}$, and $h(N:M)=-\infty$ if $M$ does not embed into an ultrapower of $\mathcal{R}$. (Exercise).
\item $h(N_{1}:M_{1})\leq h(N_{2}:M_{2})$ if $N_{1}\leq N_{2}\leq M_{2}\leq M_{1}$,  \label{I:monotonicity of 1 bounded entropy}(Exercise).
\item $h(N:M)\leq 0$ if $N\leq M$ and $N$ is  hyperfinite.  \label{I:hyperfinite has 1-bdd ent zero} (Exercise).
% \item For $M$ diffuse, $h(M)<\infty$ if and only if $M$ is strongly $1$-bounded in the sense of Jung. (See \cite[Proposition A.16]{Hayes8}.) \label{I:preserves SB}
\item $h(M)=\infty$ if $M$ is diffuse, and $M = \mathrm{W}^*(x_{1},\cdots,x_{n})$ where $x_{j}\in M_{sa}$ for all $1\leq j\leq n$ and $\delta_{0}(x_{1},\cdots,x_{n})>1$. For example, this applies to $M=L(\bF_{n})$ for $n>1$ because of \cite{FreeEntropyDimensionII, FreeEntropyDimensionIII} together with Theorem \ref{thm:S1B in terms of 1-bounded entropy} and \cite[Corollary 3.5]{JungSB}.
\item $h(N_{1}\vee N_{2}:M)\leq h(N_{1}:M)+h(N_{2}:M)$ if $N_{1},N_{2}\leq M$ and $N_{1}\cap N_{2}$ is diffuse. (See \cite[Lemma A.12]{Hayes8}.)\label{I:subadditivity of 1 bdd ent}
\item Suppose that $(N_{\alpha})_{\alpha}$ is an increasing chain of diffuse von Neumann subalgebras of a von Neumann algebra $M$. Then
\[h\left(\bigvee_{\alpha}N_{\alpha}:M\right)=\sup_{\alpha}h(N_{\alpha}:M).\]
\label{I:increasing limits of 1bdd ent first variable} (See \cite[Lemma A.10]{Hayes8}.)
\item $h(N:M)=h(N:M^{\omega})$ if  $\omega$ is a free ultrafilter on an infinite set. (See \cite[Proposition 4.5]{Hayes8}.) \label{I:omegafying in the second variable}
\item $h(W^{*}(\cH_{\textnormal{anti-c}}(N\leq M)):M)=h(N:M)$ if $N\leq M$ is diffuse.   (See \cite[Theorem 3.8]{Hayes8}. See also Section \ref{sec: omega-mega solidity} for the definition of $W^{*}(Y)$ for $Y\subseteq L^{2}(M)$.) \label{item:preservation under normalizers}
\item \label{item:direct sum inequality} Let $I$ be a countable set, and $M=\bigoplus_{i\in I}M_{i}$ with $M_{i}$ diffuse for all $i$. Suppose that $\tau$ is a faithful trace on $M$, and that $\lambda_{i}$ is the trace of the identity on $M_{i}$. Endow $M_{i}$ with the trace $\tau_{i}=\frac{\tau|_{M_{i}}}{\lambda_{i}}$. Fix $N_{i}\leq M_{i}$ for all $i\in I$. Then
\[h(N:M,\tau)\leq \sum_{i}\lambda_{i}^{2}h(N_{i}:M_{i},\tau_{i}).\]
(See the proof of \cite[Proposition A.13 (i)]{Hayes8}.)
\item
\label{item:corner preserving vanishing}
If $z\in \Proj(Z(M))$, $N\leq M$ and $h(N:M)\leq 0$, then $h(Nz:Mz)\leq 0$.
(see \cite[Lemma 4.2]{PropTS1B}).

\item $h(pNp:pMp)=\frac{1}{\tau(p)^{2}}h(N:M)$, if $N\leq M$ is diffuse, $p$ is a nonzero projection in $N$, and $M$ is a factor.
(this follows from modifying the proofs of \cite[Proposition A.13 (ii)]{Hayes8} and \cite[Proposition 4.6]{PropTS1B}).

\label{item: corner inequality}

\end{list}

% The $1$-bounded entropy has several applications, see (...). One thing that will be important for us is that it naturally leads to special subalgebras.

\subsection{Pinsker algebras}\label{sec:Pinsker algebra}

\begin{defn}
Let $(M,\tau)$ be a tracial von Neumann algebra. We say that $P\leq M$ is \emph{Pinsker} if $h(P:M)\leq 0$ and for any $P\leq Q\leq M$ with $P\ne Q$ we have $h(Q:M)>0$.
\end{defn}

By Properties P\ref{I:subadditivity of 1 bdd ent} and P\ref{I:increasing limits of 1bdd ent first variable}, if $Q\leq M$ is diffuse and $h(Q:M)\leq 0$, then there is a unique Pinsker algebra $P\leq M$ with $Q\subseteq P$. E.g.
\[P=\bigvee_{N\leq M, N\supseteq Q, h(N:M)\leq 0}N.\]
We call $P$ the \emph{Pinsker algebra of $Q\subseteq M$}.
By \cite{HayesPT} and the recent breakthrough result of Belinschi-Capitaine \cite{PTProperty} and Bordenave-Collins \cite{bordenave2023norm}, we have the following classification of Pinsker subalgebras of free group factors.

\begin{thm}\label{thm:classification of Pinsker algebras}
Fix $r\in \bN \cup\{\infty\}$. Then:
\begin{enumerate}[(i)]
\item $Q\leq L(\bF_{r})$ is amenable if and only if $h(Q:L(\bF_{r}))=0$,
\item $P\leq L(\bF_{r})$ is Pinsker if and only if it is maximal amenable.
\end{enumerate}
\end{thm}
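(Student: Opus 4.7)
The plan is to derive (ii) as a formal consequence of (i) plus the definition of Pinsker algebra, so the substantive content lies entirely in (i). Within (i), the forward implication will follow from the axiomatic permanence properties listed in Section \ref{sec: 1 bounded entropy}, whereas the reverse implication is where the main input enters, namely the reduction of \cite{HayesPT} combined with the random matrix theorem of \cite{PTProperty, bordenave2023norm}.

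First I would handle the easy direction of (i). If $Q\leq L(\bF_r)$ is amenable, Connes' theorem makes $Q$ hyperfinite, so property P\ref{I:hyperfinite has 1-bdd ent zero} yields $h(Q:L(\bF_r))\leq 0$. On the other hand, $L(\bF_r)$ is Connes-embeddable (its free semicircular generators are modelled by independent self-adjoint Gaussian matrices), so the second permanence property listed in Section \ref{sec: 1 bounded entropy} gives $h(Q:L(\bF_r))\geq 0$. Equality follows at once.

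For the reverse direction of (i), which is where the only real work sits, suppose $h(Q:L(\bF_r))=0$. Decompose $Q=Q_a\oplus Q_d$ into its atomic and diffuse parts; $Q_a$ is amenable for free, and by P\ref{I:monotonicity of 1 bounded entropy} we have $h(Q_d:L(\bF_r))\leq 0$, so it suffices to treat the diffuse case. Using P\ref{I:subadditivity of 1 bdd ent} and P\ref{I:increasing limits of 1bdd ent first variable} I would apply Zorn's lemma to embed $Q_d$ in a Pinsker algebra $P\leq L(\bF_r)$. The reduction of \cite{HayesPT}, combined with the random matrix theorem of \cite{PTProperty, bordenave2023norm}, asserts precisely that every Pinsker subalgebra of $L(\bF_r)$ is amenable; therefore $P\supseteq Q_d$, and hence $Q$, is amenable. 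This deep step is invoked as a black box and is the only nontrivial ingredient in the entire proof.

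Finally, (ii) follows purely formally from (i). If $P$ is Pinsker, then $h(P:L(\bF_r))\leq 0$, so (i) makes $P$ amenable; any amenable $Q\supseteq P$ then has $h(Q:L(\bF_r))=0$ by (i) applied to $Q$, forcing $Q=P$ by maximality of $P$ as a Pinsker algebra, so $P$ is maximal amenable. Conversely, a maximal amenable $P$ has $h(P:L(\bF_r))=0$ by (i), and any $Q\supseteq P$ with $h(Q:L(\bF_r))\leq 0$ is amenable by (i), hence $Q=P$ by maximal amenability of $P$; so $P$ is Pinsker. The main obstacle is exactly the black-box step in the reverse direction of (i); everything else is bookkeeping with the axiomatic properties.
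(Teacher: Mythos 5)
Your proposal is correct and follows essentially the same route as the paper, which in fact gives no proof of this theorem at all: it is stated as a direct consequence of \cite{HayesPT} together with \cite{PTProperty,bordenave2023norm}, which is exactly the black box you invoke for the hard implication, and the remaining directions are the same routine bookkeeping with properties P2 and P4 and the definition of a Pinsker algebra. The only loose point is your reduction to the diffuse case: $Q_d=Qz$ is not a \emph{unital} subalgebra of $Q$, so P3 does not literally yield $h(Q_d:L(\bF_r))\le 0$ (one should instead cut down as in Lemma \ref{lem: cut down of 0 is 0}, noting the resulting corner is an interpolated free group factor), but this is a cosmetic issue since the cited results are in any case applied to diffuse subalgebras.
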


As remarked in \cite{FreePinsker}, we may think of $1$-bounded entropy as analogous to the Kolmogorov-Sina\v\i\ entropy in the context of probability measure-preserving actions of groups. Entropy for probability measure-preserving actions of groups was first developed in the case of $\bZ$ by \cite{Kol58, Sin59}, for amenable groups in \cite{Kieff, OrnWeiss}, and then for sofic groups in \cite{Bow,KLi}.
See also \cite{SewardKrieger} for a potential approach to entropy for arbitrary acting groups, called Rokhlin entropy.
A probability measure-preserving action $G\actson (X,\mu)$ with $G$ sofic is said to have \emph{complete positive entropy} if every nontrivial quotient probability measure-preserving quotient action has positive entropy. Dually, this is equivalent to saying that if $B$ is a $G$-invariant von Neumann subalgebra $B$ of $L^{\infty}(X,\mu)$ with $B\ne \bC 1$, then the action of $G$ on $B$ has positive entropy.

Motivated by the sofic entropy case, one could na{\"\i}vely define a tracial von Neumann algebra $(M,\tau)$ to have completely positive $1$-bounded entropy if any nontrivial subalgebra has positive $1$-bounded entropy. However, this will never be satisfied, as any hyperfinite subalgebra will have vanishing $1$-bounded entropy. Thus any tracial von Neumann algebra will have many subalgebras with vanishing $1$-bounded entropy (these subalgebras can be chosen to be diffuse if $M$ is). Instead, we should think of a result saying that the only subalgebras with vanishing $1$-bounded entropy  are ones that can be quickly deduced to  vanishing have $1$-bounded entropy from the Properties P\ref{item:definition}-P\ref{item: corner inequality} listed above (e.g. hyperfinite algebras, property Gamma algebras, nonprime algebras, algebras with a Cartan, etc.) as a complete positive entropy result. We may thus think of Theorem \ref{thm:classification of Pinsker algebras} as a complete positive entropy result for $1$-bounded entropy. Since free group factors may be thought of as the free probability analogue of Bernoulli shifts (e.g. because $L(\bF_{\infty})$ is the crossed product algebra associated to a free Bernoulli shift), Theorem \ref{thm:classification of Pinsker algebras} should be compared with previous results establishing complete positive entropy of Bernoulli shifts (see \cite{RudolphMixing, KerrCPE}).

As discussed in \cite[Section 5]{Pineapple}, Pinsker algebras of measure-preserving dynamical systems are analogous to the maximal rigid subalgebras of $s$-malleable deformations in \cite{Pineapple}. This allows for an exchange of ideas and methods between deformation/rigidity theory, free probability theory, and ergodic theory. This will be exploited in Section \ref{sec:intertwine} where we adapt arguments in \cite{Pineapple} to show that Pinsker algebras do not have any weak intertwiners between them unless they have corners which are unitarily conjugate.

\section{Pinsker algebras of interpolated free group factors}\label{sec: Pinsker algebra interpolate}

% The main result in this section is
As mentioned before, the combined results of \cite{HayesPT} and \cite{PTProperty,bordenave2023norm} prove that for $r\in \bN$ we have that $Q\leq L(\bF_{r})$ is amenable if and only if $h(Q:L(\bF_{r})) = 0$. The main goal of this section is to explain how this automatically generalizes to interpolated free group factors.

\begin{cor}\label{Pinsker algebras of interpolated fgf}
Fix $t>1$. Then $Q\leq L(\bF_{t})$ is amenable if and only if $h(Q:L(\bF_{t})) =0$.
\end{cor}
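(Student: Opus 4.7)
The forward direction ``$Q$ amenable $\Rightarrow h(Q:L(\bF_t))=0$'' is immediate from the permanence properties listed in Section~\ref{sec: 1 bounded entropy}: P2 gives $h(Q:L(\bF_t))\geq 0$ since $L(\bF_t)$ is Connes-embeddable, and P4 combined with Connes' theorem (amenability $=$ hyperfiniteness) gives $h(Q:L(\bF_t))\leq 0$. I would dispose of this direction in one line.

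For the reverse direction, the plan is to reduce to the integer case (already established by combining \cite{HayesPT} with \cite{PTProperty,bordenave2023norm}) via a single compression. Assume $h(Q:L(\bF_t))=0$; I may assume $Q$ is diffuse, since any atomic part of $Q$ is a direct sum of matrix algebras and is trivially amenable. Fix an integer $n\geq 2$ with $n\geq t$ (take $n=2$ when $t\leq 2$, or $n=\lceil t\rceil$ otherwise) and set $\lambda=\sqrt{(t-1)/(n-1)}\in(0,1]$. Using diffuseness of $Q$, choose a projection $p\in Q$ of trace $\lambda$ having full central support in $Q$. The R\u{a}dulescu--Dykema amplification formula then supplies a $*$-isomorphism $pL(\bF_t)p\cong L(\bF_{1+(t-1)/\lambda^2})=L(\bF_n)$. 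Property P12 (applicable since $L(\bF_t)$ is a factor, $Q$ is diffuse, and $p\in Q$ is a nonzero projection) yields
\[
h(pQp : L(\bF_n)) \;=\; h(pQp : pL(\bF_t)p) \;=\; \lambda^{-2}\,h(Q:L(\bF_t)) \;=\; 0.
\]
The integer case applied to $pQp\leq L(\bF_n)$ then forces $pQp$ to be amenable, and since $p$ has full central support in $Q$, Morita equivalence of amenability passes this back to $Q$.

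The main obstacle is mostly bookkeeping rather than conceptual: one must verify that the projection $p$ can be chosen with full central support in $Q$ (automatic when $Q$ is a factor, but requiring a direct-integral or componentwise argument in general), and that the R\u{a}dulescu--Dykema isomorphism is natural enough to realize $pQp$ as a genuine subalgebra of an integer-parameter free group factor. I do not expect any conceptual difficulty specific to the case $1<t<2$: for every $t>1$ one can always pick an integer $n\geq t$ and compress $L(\bF_t)$ down to $L(\bF_n)$ via a subprojection of $Q$, which uniformly reduces the corollary to the already known integer case.
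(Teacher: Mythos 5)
Your forward direction is correct and is exactly what the paper does (it is Property P4 together with Connes' theorem; the paper does not even bother with P2). For the reverse direction your overall strategy --- compress by a projection of the right trace so that the Dykema--R{\^a}dulescu formula lands you in an integer free group factor, apply the known integer case, and pull amenability back --- is the same reduction the paper performs. But there is a genuine gap in the pivotal step, not mere bookkeeping: a projection $p\in Q$ with $\tau(p)=\lambda$ \emph{and} full central support in $Q$ need not exist. The central support in $Q$ of a projection lying in an abelian direct summand of $Q$ is that projection itself, so if $Q$ has an abelian direct summand $Az$ with $\tau(z)>\lambda$ (or if $Q$ is itself abelian), every projection of $Q$ with full central support has trace at least $\tau(z)>\lambda$. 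More generally the obstruction appears whenever the amenable part of $Q$ is ``too large'' relative to $\lambda$, and these are precisely the $Q$ for which you cannot wave the issue away by saying the conclusion is trivial. To repair the argument you would first have to split off the maximal amenable central summand $Qz_0$ (whose $h$-value causes no trouble), observe that $Q(1-z_0)$ is type $\mathrm{II}_1$ and hence contains, for every sufficiently small $c$, a projection with center-valued trace constantly equal to $c$ (which does have full central support in $Q(1-z_0)$), and then enlarge $n$ so that $\lambda=\sqrt{(t-1)/(n-1)}\leq\tau(1-z_0)$. Even then you need a lemma controlling $h$ under compression by a projection that is central in $Q$ but not in $L(\bF_t)$; this is exactly the content of the paper's Lemma \ref{lem: cut down of 0 is 0}, which is not one of the listed properties P1--P12 and requires its own argument via the central decomposition.

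For comparison, the paper's proof sidesteps the full-central-support problem entirely by arguing by contradiction: if $Q$ is not amenable, the Connes--Haagerup criterion produces a corner $Qp$, $p\in Z(Q)$, witnessing non-amenability in a way that survives amplification; one then passes to the Pinsker algebra $P\supseteq Qp$, amplifies to $M_n(P)\leq M_n(pL(\bF_t)p)$, and compresses by \emph{any} projection $q\in M_n(P)$ of the trace needed to reach $L(\bF_2)$ (no central support condition required, since $M_n(P)$ is diffuse). The integer case forces the corner $qM_n(P)q$ to be amenable, contradicting the fact that $M_n(P)$ has no amenable corners. The machinery making this work is Lemma \ref{lem: cut down of 0 is 0} and Proposition \ref{prop: amplifications of Pinsker} on corners and amplifications of Pinsker algebras; some version of these inputs is unavoidable in your approach as well, so you should either prove them or restructure along the paper's lines.
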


By rephrasing the above Corollary we obtain:

\begin{cor}\label{cor:classification of Pinsker}
Fix $t>1$. Let $P\leq L(\bF_{t})$. Then $P\leq L(\bF_{t})$ is Pinsker if and only if $P$ is maximal amenable.
\end{cor}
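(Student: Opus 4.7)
The plan is to observe that Corollary \ref{cor:classification of Pinsker} is a direct definitional reformulation of the preceding Corollary \ref{Pinsker algebras of interpolated fgf}, once one uses Property P2 to remove the asymmetry between ``$\leq 0$'' and ``$=0$'' in the definition of a Pinsker algebra.

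First I would note that $L(\bF_{t})$ is Connes-embeddable (it embeds in an ultrapower of $\cR$), and so by Property P2 we have $h(Q : L(\bF_{t})) \geq 0$ for every $Q \leq L(\bF_{t})$. Combined with Corollary \ref{Pinsker algebras of interpolated fgf}, this yields the sharp dichotomy: $h(Q : L(\bF_{t})) \leq 0$ if and only if $h(Q : L(\bF_{t})) = 0$ if and only if $Q$ is amenable; equivalently, $h(Q : L(\bF_{t})) > 0$ if and only if $Q$ is not amenable.

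Then I would unpack the definition of ``Pinsker'' from Section \ref{sec:Pinsker algebra}. By definition, $P \leq L(\bF_{t})$ is Pinsker if and only if (i) $h(P : L(\bF_{t})) \leq 0$ and (ii) for every $Q$ with $P \subsetneq Q \leq L(\bF_{t})$ we have $h(Q : L(\bF_{t})) > 0$. Applying the dichotomy above termwise, condition (i) translates to ``$P$ is amenable'' and condition (ii) translates to ``every proper extension of $P$ inside $L(\bF_{t})$ fails to be amenable''. The conjunction of these two statements is precisely what it means for $P$ to be maximal amenable in $L(\bF_{t})$, which finishes the argument.

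There is no real obstacle here: the substantive content is entirely absorbed into Corollary \ref{Pinsker algebras of interpolated fgf}, and the remaining step is a mechanical translation. Degenerate cases such as $P = \bC 1$ or $P = L(\bF_{t})$ need no separate treatment, since in each such case both sides of the claimed equivalence fail (e.g.\ $\bC 1$ admits a diffuse hyperfinite extension, hence is neither Pinsker nor maximal amenable, and $L(\bF_{t})$ itself is not amenable, hence neither Pinsker nor maximal amenable).
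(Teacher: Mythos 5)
Your proposal is correct and matches the paper's treatment: the paper derives Corollary \ref{cor:classification of Pinsker} simply "by rephrasing" Corollary \ref{Pinsker algebras of interpolated fgf}, and your use of Property P2 (nonnegativity of $h$ for subalgebras of the Connes-embeddable algebra $L(\bF_{t})$) to identify the conditions $h\leq 0$ and $h=0$ is exactly the implicit step needed to make that rephrasing rigorous. Nothing is missing.
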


In order to obtain this result, we will study the relationship between Pinsker algebras and compression.  First, we generalize Property P\ref{item: corner inequality} to the case where $M$ is not a factor.

\begin{lem}\label{lem: cut down of 0 is 0}
Let $(M,\tau)$ be a tracial von Neumann algebra and $P\leq M$. If $h(P:M)\leq 0$, then for every projection $p\in P$ we have that $h(pPp:pMp)\leq 0$.
\end{lem}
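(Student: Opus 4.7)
The plan is to reduce the general case to the case where $M$ is a factor, where Property P\ref{item: corner inequality} applies directly. Throughout, I will interpret $pPp$ as the von Neumann subalgebra of $pMp$ it generates (canonical when $p\in P$).

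First, I would reduce to the case where $p$ has full central support in $M$. Letting $z\in Z(M)$ denote the central support of $p$, we have $pMp=p(Mz)p$ and $pPp=p(Pz)p$, and by Property P\ref{item:corner preserving vanishing} we still have $h(Pz:Mz)\leq 0$, so we may replace $(P,M)$ with $(Pz,Mz)$. In the factor case, the result is then immediate: when $P$ is diffuse, Property P\ref{item: corner inequality} gives $h(pPp:pMp)=\tau(p)^{-2}h(P:M)\leq 0$; when $P$ fails to be diffuse, we split off its atomic part, which contributes only a hyperfinite corner to $pPp$, handled by Property P\ref{I:hyperfinite has 1-bdd ent zero}.

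The main step is to reduce to the factor case when $M$ has nontrivial center. My approach is via central disintegration: write $Z(M)=L^\infty(X,\mu)$ so that $M=\int_X^\oplus M_x\,d\mu(x)$ with each $M_x$ a factor, together with compatible decompositions $P=\int^\oplus P_x$ and $p=\int^\oplus p_x$ (where $p_x\in P_x$ is nonzero for a.e.\ $x$ by the central-support reduction). Applying the factor case to each fiber and then integrating should yield $h(pPp:pMp)\leq 0$.

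The principal obstacle is that the listed properties do not include an explicit disintegration formula for the $1$-bounded entropy in the presence, so this last integration step is not formal from P\ref{item:definition}--P\ref{item: corner inequality}. One route is to appeal to such a disintegration result from the literature on $1$-bounded entropy, or to unfold the definition of $h$ from the appendix. A more elementary route using only the listed properties is to first prove the statement in the atomic-center case, where $M=\bigoplus_i M_i$ is a (possibly countable) direct sum of factors: then $pPp=\bigoplus_i p_i P_i p_i\leq \bigoplus_i p_i M_i p_i=pMp$, and Property P\ref{item:corner preserving vanishing} combined with the factor case and the direct-sum inequality P\ref{item:direct sum inequality} gives the bound. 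One then approximates general $M$ by cutting along an increasing filtration of $Z(M)$ by finite-dimensional subalgebras and invoking continuity under increasing unions (Property P\ref{I:increasing limits of 1bdd ent first variable}).
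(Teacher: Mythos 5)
Your treatment of the factorial and atomic-center pieces matches the paper's (it likewise isolates the factorial central summands, applies P\ref{item:corner preserving vanishing} and P\ref{item: corner inequality} to each, and reassembles via the direct sum inequality P\ref{item:direct sum inequality}). The gap is in the diffuse-center case, and it is genuine. Disintegration is not available from the listed properties, as you concede; but your fallback does not close the hole either. Property P\ref{I:increasing limits of 1bdd ent first variable} concerns an increasing chain of subalgebras in the \emph{first} variable of $h(\,\cdot\,:M)$ inside a fixed ambient $M$; it says nothing about refining central decompositions of $M$ itself. If you cut along a finite-dimensional subalgebra of $Z(M)$, the resulting central summands $Mz_{j}$ are still not factors, so the factor case never becomes applicable at any finite stage of your filtration, and no listed property lets you pass to the limit of the direct sum inequality along a refining net of central partitions (that inequality is also an upper bound in the wrong direction for such a limiting argument).

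The paper's proof avoids all of this with one observation you are missing: a tracial von Neumann algebra $N$ with diffuse center satisfies $h(N)\leq 0$ unconditionally, because every unitary of $N$ normalizes $Z(N)$, so $N=W^{*}(\cN_{N}(Z(N)))$ with $Z(N)$ diffuse abelian, hence hyperfinite, and P\ref{I:hyperfinite has 1-bdd ent zero} together with P\ref{item:preservation under normalizers} gives $h(N)=h(W^{*}(\cN_{N}(Z(N))):N)\leq 0$. Concretely, the paper writes $1=z_{0}+\sum_{i}z_{i}$ with $Mz_{0}$ having diffuse center and each $Mz_{i}$ a factor; on the $z_{0}$-summand it bounds $h((pMp)z_{0})\leq 0$ outright by this normalizer trick (using $Z(pMp)=pZ(M)p$, so $(pMp)z_{0}$ again has diffuse center), with no reference to $P$ at all. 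With that ingredient your argument goes through; without it, the diffuse-center summand is unhandled.
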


\begin{proof}
By decomposing the center of $M$ into atomic and diffuse pieces, we can find a central projection $z_{0}\in M$ (potentially zero), a countable set $I$ (potentially empty), and central projections $(z_{i})_{i\in I}$ so that:
\begin{itemize}
    \item $1=z_{0}+\sum_{i}z_{i}$,
    \item in the case $z_{0}\ne 0$ we have that $Mz_{0}$ has diffuse center,
    \item $Mz_{i}$ is a factor for all $i\in I$.
\end{itemize}
For $i\in \{0\}\cup I$, let $P_{i}=(pPp)z_{i}$ (even though $z_{i}$ may not be in $P$, we still have that $P_{i}$ is a von Neumann subalgebra of $M$ as $z_{i}$ is central). Set $\widehat{P}=\overline{\sum_{i}P_{i}}^{WOT}$.
Then, by P\ref{I:monotonicity of 1 bounded entropy} and P\ref{item:direct sum inequality}:
\begin{align}\label{eqn: same trick recycled}
h(pPp:pMp)\leq h(\widehat{P}:pMp)&\leq \tau(pz_{0})^{2}h(P_{0}:(pMp)z_{0})+\sum_{i}\tau(pz_{i})^{2}h(P_{i}:(pMp)z_{i})\\
&\leq \tau(pz_{0})^{2}h((pMp)z_{0})+\sum_{i}\tau(pz_{i})^{2}h(P_{i}:(pMp)z_{i}) \nonumber
\end{align}
So  it suffices to show each term on the right-side of this inequality is nonpositive.

We first show that $\tau(pz_{0})^{2} h((pMp)z_{0})\leq 0$. If $pz_{0}=0$, the claim is true. Otherwise, since $M$ has diffuse center and $Z(pMp)=pZ(M)p)$, we have that $(pMp)z_{0}$ has diffuse center. Thus $(pMp)z_{0}=W^{*}(\cN_{(pMp)z_{0}}(pZ(M)pz_{0}))$, and so  the combination of Properties P\ref{I:hyperfinite has 1-bdd ent zero} and P\ref{item:preservation under normalizers} imply that $h((pMp)z_{0})\leq 0$.

Now consider $h(P_{i}:(pMp)z_{i})$ for $i\in I$. By Property P\ref{item:corner preserving vanishing}, we know that
\[h(Pz_{i}:Mz_{i})\leq 0\]
for all $i\in I$. Thus, by Property P\ref{item: corner inequality} we have that
\[h(P_{i}:(pMp)z_{i})=\frac{1}{\tau(pz_{i})^{2}}h(Pz_{i}:Mz_{i})\leq 0.\]

Thus we have shown that all terms on the right-hand side of (\ref{eqn: same trick recycled}) are nonpositive, and this completes the proof.
\end{proof}

We now show that being a Pinsker algebra is preserved under taking corners and amplifications.

\begin{prop}\label{prop: amplifications of Pinsker}
Let $(M,\tau)$ be a tracial von Neumann algebra, and suppose that $P\leq M$ is Pinsker. Then:
\begin{enumerate}[(i)]
\item we have $Z(M)\subseteq P$.\label{item: cutting to summands}
\item for any nonzero projection $p\in P$ we have that $pPp$ is a Pinsker subalgebra of $pMp$. \label{item: compressions of Pinsker}
\item for any $n\in \bN$, we have that $M_{n}(P)$ is a Pinsker subalgebra of $M_{n}(M)$. \label{item: amplification of Pinkser}
\end{enumerate}
\end{prop}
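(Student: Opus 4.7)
The plan is to prove the three parts in the order (i), (iii), (ii), leveraging Lemma \ref{lem: cut down of 0 is 0} and the permanence properties P\ref{item:definition}--P\ref{item: corner inequality}.

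First I would address (i) by observing that every unitary $u \in \mathcal{U}(Z(M))$ commutes with $P$, hence $u \in \mathcal{N}_M(P) \subseteq \cH_{\textnormal{anti-c}}(P \leq M)$ by the inclusions recalled in Section \ref{sec: singular subspace}. Since $Z(M)$ is the WOT closure of its unitary group, $Z(M) \subseteq W^{*}(\cH_{\textnormal{anti-c}}(P \leq M))$. Assuming $P$ is diffuse, Property P\ref{item:preservation under normalizers} combined with monotonicity (P\ref{I:monotonicity of 1 bounded entropy}) will then yield
\[h(W^{*}(P \cup Z(M)):M) \leq h(W^{*}(\cH_{\textnormal{anti-c}}(P \leq M)):M) = h(P:M) \leq 0,\]
so Pinsker maximality will force $W^{*}(P \cup Z(M)) = P$, i.e. $Z(M) \subseteq P$. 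The diffuseness of $P$ is automatic in the settings of interest of this paper, where $P$ arises as the Pinsker algebra of some diffuse subalgebra $Q \leq M$.

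Next I would establish (iii) in two steps. For Step A, to show $h(M_{n}(P):M_{n}(M)) \leq 0$, I would consider the projection $p = e_{11} \otimes 1 \in M_{n}(M)$, for which $pM_{n}(M)p \cong M$ and $pM_{n}(P)p \cong P$. When $M$ is a factor, Property P\ref{item: corner inequality} immediately gives $h(M_{n}(P):M_{n}(M)) = h(P:M)/n^{2} \leq 0$. For general $M$, I would mimic the central decomposition from the proof of Lemma \ref{lem: cut down of 0 is 0}: write $M = Mz_{0} \oplus \bigoplus_{i} Mz_{i}$ with $Mz_{0}$ of diffuse center and each $Mz_{i}$ a factor; the factor pieces are handled by P\ref{item: corner inequality}, and the piece $M_{n}(Mz_{0})$ has diffuse center, hence satisfies $h(M_{n}(Mz_{0})) \leq 0$ by the combination of P\ref{I:hyperfinite has 1-bdd ent zero} and P\ref{item:preservation under normalizers} (since every von Neumann algebra is the von Neumann algebra generated by its unitaries, all of which normalize the center). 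For Step B, suppose $M_{n}(P) \subseteq \tilde Q \leq M_{n}(M)$ with $h(\tilde Q:M_{n}(M)) \leq 0$; setting $p = e_{11} \otimes 1$, Lemma \ref{lem: cut down of 0 is 0} yields $h(p\tilde Qp:M) \leq 0$, and Pinsker maximality of $P$ in $M$ forces $p\tilde Qp = P$. Since $\tilde Q \supseteq M_{n}(P)$ contains all matrix units $e_{ij} \otimes 1$, for each $x \in p\tilde Qp = P$ the element $(e_{i1}\otimes 1)x(e_{1j}\otimes 1)$ lies in $M_{n}(P) \cap \tilde Q$; as these elements generate $\tilde Q$, we conclude $\tilde Q = M_{n}(P)$.

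Finally for (ii), $h(pPp:pMp) \leq 0$ is immediate from Lemma \ref{lem: cut down of 0 is 0}. For maximality, given $pPp \subseteq Q \leq pMp$ with $h(Q:pMp) \leq 0$, I would set $\tilde Q := W^{*}(P \cup Q) \leq M$ and verify that $P \cap Q = pPp$ (using $Q \subseteq pMp$ to get $P \cap Q \subseteq pPp$, and $pPp \subseteq Q \cap P$ by hypothesis). Assuming $pPp$ is diffuse (to be ensured via (i) together with the direct-sum structure of $P$), Property P\ref{I:subadditivity of 1 bdd ent} gives $h(\tilde Q:M) \leq h(P:M) + h(Q:M)$. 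The crucial bound $h(Q:M) \leq h(Q:pMp) \leq 0$ follows from monotonicity (P\ref{I:monotonicity of 1 bounded entropy}) since $pMp \leq M$. Combining these, $h(\tilde Q:M) \leq 0$, and Pinsker maximality yields $\tilde Q = P$, forcing $Q \subseteq P \cap pMp = pPp$ and thus $Q = pPp$. The main obstacle I anticipate is the non-factor case of Step A in (iii), since without factoriality Property P\ref{item: corner inequality} does not directly apply and one must implement the central-decomposition argument of Lemma \ref{lem: cut down of 0 is 0} carefully. A secondary subtlety is the diffuseness hypothesis needed in (i) and (ii), which is handled by noting that Pinsker algebras in the natural settings of this paper contain a diffuse subalgebra, and that by (i) the projection $p \in P$ decomposes compatibly with the central decomposition of $M$, reducing the remaining cases to the main case summand by summand.
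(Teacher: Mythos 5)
Your architecture is sound and reaches the correct conclusions, but it genuinely diverges from the paper in parts (\ref{item: compressions of Pinsker}) and (\ref{item: amplification of Pinkser}). Part (\ref{item: cutting to summands}) is the paper's argument. For (\ref{item: amplification of Pinkser}), the paper avoids the corner formula P\ref{item: corner inequality} and any central decomposition: it observes that $\cU(M_n(\bC))\cup\cU(P)\subseteq \cN_{M_n(M)}(P\otimes 1)$, so $M_n(P)\subseteq W^*(\cN_{M_n(M)}(P))$, whence $h(M_n(P):M_n(M))\leq h(P:M_n(M))\leq h(P:M)\leq 0$ by P\ref{item:preservation under normalizers} and P\ref{I:monotonicity of 1 bounded entropy}, with no factoriality hypothesis; your Step A works but is more labor-intensive. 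Your Step B is essentially the paper's matrix-unit argument (the paper gets $e_{11}\tilde{Q}e_{11}=P$ from (\ref{item: compressions of Pinsker}), you get it from Lemma \ref{lem: cut down of 0 is 0} plus maximality of $P$ in $M\cong pM_n(M)p$; both are fine). The real divergence is (\ref{item: compressions of Pinsker}): the paper proves it directly by taking the central support $z$ of $p$, which lies in $P$ by (\ref{item: cutting to summands}), writing $z=\sum_i v_iv_i^*$ with partial isometries $v_i\in P$ satisfying $v_i^*v_i\leq p$, and summing corner estimates via the direct-sum inequality; you instead deduce it from a join/subadditivity argument upstairs in $M$.

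There is a gap in that deduction. The step ``$h(Q:M)\leq h(Q:pMp)\leq 0$ by monotonicity since $pMp\leq M$'' is not an instance of P\ref{I:monotonicity of 1 bounded entropy}: monotonicity governs unital inclusions $N_1\leq N_2\leq M_2\leq M_1$ (the appendix defines and compares $h$ via generating tuples of unital subalgebras), whereas $pMp\leq M$ is a non-unital corner, and $h(Q:M)$ for $Q\not\ni 1_M$ must first be interpreted, say as $h(Q\oplus\bC(1-p):M)$. The correct tool for passing between a corner and the ambient algebra is P\ref{item: corner inequality}, which gives $h(Q\oplus\bC(1-p):M)=\tau(p)^2\,h(Q:pMp)\leq 0$ when $M$ is a factor; in general one must rerun the central decomposition, i.e., essentially the content that the paper's partial-isometry argument packages. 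A secondary issue: after replacing $Q$ by $Q\oplus\bC(1-p)$, its intersection with $P$ contains the minimal projection $1-p$ and so is not diffuse as P\ref{I:subadditivity of 1 bdd ent} literally requires; you need the variant (implicit in the proof of \cite[Lemma A.12]{Hayes8}) that only asks for a common diffuse subalgebra, here $pPp$. Both points are repairable, but as written the proof of (\ref{item: compressions of Pinsker}) rests on an unjustified transfer of $h(Q:pMp)\leq 0$ up to $M$. Like the paper, you elide the diffuseness hypotheses in P\ref{item:preservation under normalizers} and P\ref{item: corner inequality}; you at least flag this, which is fair since the paper does not address it either.
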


\begin{proof}

(\ref{item: cutting to summands}):
Note that $Z(M)\vee P\subseteq W^{*}(\cN_{M}(P))$,
and thus, by P\ref{item:preservation under normalizers},
\[h(Z(M)\vee P:M)\leq h(P:M)=0,\]
and so $P$ being Pinsker forces $Z(M)\vee P\subseteq P$. That is, $Z(M)\subseteq P$.

(\ref{item: compressions of Pinsker}): Let $z$ be the central support of $p\in M$. Then, by (\ref{item: cutting to summands}), we know that $z$ is under the central support of $p$ in $P$. So  there exists a collection $\{v_{i}\}_{i\in I}$ of nonzero partial isometries in $P$ so that $v_{i}^{*}v_{i}\leq p$, and $z=\sum_{i}v_{i}v_{i}^{*}$. Set $p_{i}=v_{i}^{*}v_{i}$. We may, and will, assume that there is some $i_{0}$ so that $v_{i_{0}}=p$.  By Lemma \ref{lem: cut down of 0 is 0}, $h(pPp:pMp) \leq 0$ and so there exists a Pinsker subalgebra $Q$ of $pMp$ containing $pPp$.
Set
\[\widehat{Q}=\overline{Q+\sum_{i\in I\setminus \{i_{0}\}}v_{i}v_{i}^{*}Pv_{i}v_{i}^{*}}^{WOT}.\]
Thus, by P\ref{item:direct sum inequality},
\begin{align*}
h(Q:M)&\leq h\left(\widehat{Q}:\overline{pMp+\sum_{i\in I\setminus\{i_{0}\}}v_{i}v_{i}^{*}Mv_{i}v_{i}^{*}}^{WOT}\right)&\\
&\leq \tau(p)^{2}h(Q:pMp)+\sum_{i\in I\setminus\{i_{0}\}}\tau(v_{i}v_{i}^{*})^{2}h(v_{i}v_{i}^{*}Pv_{i}v_{i}^{*}:v_{i}v_{i}^{*}Mv_{i}v_{i}^{*})\\
&\leq \sum_{i\in I\setminus\{i_{0}\}}\tau(v_{i}v_{i}^{*})^{2}h(v_{i}v_{i}^{*}Pv_{i}v_{i}^{*}:v_{i}v_{i}^{*}Mv_{i}v_{i}^{*}).
\end{align*}
For all $i\in I$, we have that $x\mapsto v_{i}xv_{i}^{*}$ gives a trace-preserving isomorphism from $p_{i}Mp_{i}\to v_{i}v_{i}^{*}Mv_{i}v_{i}^{*}$ which takes $p_{i}Pp_{i}$ to $v_{i}v_{i}^{*}Pv_{i}v_{i}^{*}$. Hence for all
$i\in I$
\[h(v_{i}v_{i}^{*}Pv_{i}v_{i}^{*}:v_{i}v_{i}^{*}Mv_{i}v_{i}^{*})=h(p_{i}Pp_{i}:p_{i}Mp_{i})\leq 0,\]
by Lemma \ref{lem: cut down of 0 is 0}. Altogether we have shown that $h(\widehat{Q}:M)\leq 0$. Note that 
\[\widehat{Q}\cap P\supseteq \overline{pPp+\sum_{i\in I\setminus\{i_{0}\}}v_{i}v_{i}^{*}Pv_{i}v_{i}^{*}}^{WOT},\]
which is diffuse. Hence, by Property \ref{I:subadditivity of 1 bdd ent}, we have that $h(\widehat{Q}\vee P:M)\leq 0$, and by maximality, we have that $\widehat{Q}\vee P\subseteq P$. It follows that $\widehat{Q}\subseteq P$.
 So
\[Q=p\widehat{Q}p\subseteq pPp.\]

(\ref{item: amplification of Pinkser}):
View $M\leq M_{n}(M)$ by identifying it with $M\otimes 1\leq M\otimes M_{n}(\bC)\cong M_{n}(M)$. Under this identification, $\cN_{M_{n}(M)}(P)\supseteq \cU(M_{n}(\bC))\cup \cU(P)$, so $W^{*}(\cN_{M_{n}(M)}(P))\supseteq M_{n}(P)$. Thus, by Properties P\ref{item:preservation under normalizers}, P\ref{I:monotonicity of 1 bounded entropy}:
\[h(M_{n}(P):M_{n}(M))\leq h(P:M_{n}(M))\leq h(P:M)\leq 0. \]
So we can let $Q$ be the Pinsker algebra of $M_{n}(M)$ containing $M_{n}(P)$.  Let $e_{ij}$ be the standard matrix units of $M_{n}(\bC)$ viewed as elements of $M_{n}(M)$. Then, by (\ref{item: compressions of Pinsker}), we have that $e_{11}Qe_{11}=P$. But then, for all $x\in Q$ we have that
\[x=\sum_{i,j}e_{ii}xe_{jj}=\sum_{i,j}e_{i1}e_{11}xe_{11}e_{1j}\in M_{n}(P).\]
So $Q\leq M_{n}(P)$.

\end{proof}

\begin{proof}[Proof of Corollary \ref{Pinsker algebras of interpolated fgf}]
The forward implication is P4 of the main properties of $1$-bounded entropy we listed above. For the reverse implication, suppose for the sake of contradiction that $Q$ is not amenable. Then by Connes-Haagerup characterization of amenability (see Lemma 2.2 in \cite{HaagerupAmenable}), there is a projection $p\in Z(Q)$ and $u_{1},\cdots,u_{r}\in \mathcal{U}(Qp)$ so that
\[\left\|\frac{1}{r}\sum_{j=1}^{r}u_{j}\otimes \overline{u_{j}}\right\|<1.\]
Where $\overline{u_{j}}=(u_{j}^{*})^{op}$ and the norm is computed in $Qp\otimes_{\textnormal{min}}Qp$.
Let $P\leq pL(\bF_{t})p$ be the Pinsker algebra of $L(\bF_{t})$ which contains $Q$. By fundamental results of Dykema and R\u{a}dulescu (see \cite{DykemaIFG}, \cite{RadulaescuIFG}), we may choose $s>0$ so that $(pL(\bF_{t})p)^{s}\cong L(\bF_{2})$. Fix an integer $n>s$, and let $q$ be a projection in $M_{n}(P)$ so that $\Tr\otimes \tau(q)=s$. Observe that
\[\left\|\frac{1}{r}\sum_{j=1}^{r}(1_{n}\otimes u_{j})\otimes \overline{1_{n}\otimes u_{j}}\right\|<1,\]
where $1_{n}$ is the identity of $M_{n}(\bC)$. Hence, it follows that $M_{n}(P)$ also has no nonzero amenable direct summands. We leave it as an exercise to show that this implies that $M_{n}(P)$ has no nonzero amenable corners.
By Proposition \ref{prop: amplifications of Pinsker}, we know that
\[qM_{n}(P)q\leq qM_{n}(pL(\bF_{t})p)q\cong L(\bF_{2})\]
is Pinsker. It follows from Theorem \ref{thm:classification of Pinsker algebras} that $qM_{n}(P)q$ is amenable. This contradicts our previous observation that $M_{n}(P)$ has no nonzero amenable corners.
\end{proof}

\section{Main results}

\subsection{Orthocomplement bimodule structure for maximal amenable subalgebras}
\label{sec:orthocomp}

We start with the following consequence of Theorem \ref{thm:classification of Pinsker algebras} on the structure of the orthocomplement bimodule for any maximal amenable $P\leq L(\bF_{t})$. Note that this verifies the coarseness conjecture, independently due to the first-named author \cite[Conjecture 1.12]{Hayes8} and Popa \cite[Conjecture 5.2]{PopaWeakInter}.

\begin{cor}\label{cor: coarseness conjuecture}
Let $M=L(\bF_{t})$ for some $t>1$. For any maximal amenable $P\leq L(\bF_{t})$ we have that
\[_{P}(L^{2}(M)\ominus L^{2}(P))_{P}\leq [L^{2}(P)\otimes L^{2}(P)]^{\oplus \infty}.\]
\end{cor}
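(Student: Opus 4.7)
The plan is to reduce the statement to establishing $\cH_{\textnormal{anti-c}}(P \leq M) = L^{2}(P)$, where $M = L(\bF_{t})$, after which the desired $P$-$P$ bimodular isometry is immediate from the folklore embedding $\cH_{\textnormal{anti-c}}(P \leq M)^{\perp} \hookrightarrow L^{2}(P) \otimes L^{2}(P)$ recalled in the proof of Proposition \ref{prop:singular subspaces contain weak intertwiners}.

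First I would record the easy inclusion $L^{2}(P) \subseteq \cH_{\textnormal{anti-c}}(P \leq M)$: since $P$ is diffuse, the coarse bimodule $L^{2}(P) \otimes L^{2}(P)$ admits no nonzero $P$-central vectors, so every $P$-$P$ bimodular map $L^{2}(M) \to L^{2}(P) \otimes L^{2}(P)$ vanishes on $L^{2}(P)$. In particular $P \subseteq W^{*}(\cH_{\textnormal{anti-c}}(P \leq M))$. Next, by Corollary \ref{cor:classification of Pinsker}, $P$ being maximal amenable in $L(\bF_{t})$ is equivalent to $P$ being Pinsker, so $h(P:M) \leq 0$, and Property P\ref{item:preservation under normalizers} then yields
\[h\bigl(W^{*}(\cH_{\textnormal{anti-c}}(P \leq M)) : M\bigr) = h(P : M) \leq 0.\]
Combined with $P \subseteq W^{*}(\cH_{\textnormal{anti-c}}(P \leq M))$ and the Pinsker maximality of $P$, this forces $W^{*}(\cH_{\textnormal{anti-c}}(P \leq M)) = P$. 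Under the definition of $W^{*}(Y)$ for $Y \subseteq L^{2}(M)$ supplied in Section \ref{sec: omega-mega solidity}---namely, the smallest von Neumann subalgebra $N \leq M$ whose $L^{2}$-closure contains $Y$---this yields $\cH_{\textnormal{anti-c}}(P \leq M) \subseteq L^{2}(P)$, hence equality.

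Passing to orthogonal complements then gives $L^{2}(M) \ominus L^{2}(P) \subseteq \cH_{\textnormal{anti-c}}(P \leq M)^{\perp}$, and composing the folklore embedding of the right-hand side into $L^{2}(P) \otimes L^{2}(P)$ with the obvious inclusion $L^{2}(P) \otimes L^{2}(P) \hookrightarrow (L^{2}(P) \otimes L^{2}(P))^{\oplus \infty}$ finishes the proof. The main subtlety I anticipate is the passage from the algebraic equality $W^{*}(\cH_{\textnormal{anti-c}}(P \leq M)) = P$ to the bimodule inclusion $\cH_{\textnormal{anti-c}}(P \leq M) \subseteq L^{2}(P)$, which relies on the precise definition of $W^{*}(Y)$ from Section \ref{sec: omega-mega solidity}; under the natural reading above the step is immediate, so I expect no further difficulty beyond bookkeeping.
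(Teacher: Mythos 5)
Your proposal is correct and follows essentially the same route as the paper: the paper likewise combines the folklore embedding of $\cH_{\textnormal{anti-c}}(P\leq M)^{\perp}$ into $[L^{2}(P)\otimes L^{2}(P)]^{\oplus\infty}$ with Property P\ref{item:preservation under normalizers} and the Pinsker property of $P$ (Corollary \ref{cor:classification of Pinsker}) to conclude $\cH_{\textnormal{anti-c}}(P\leq M)=L^{2}(P)$ and then passes to orthogonal complements. You merely spell out the intermediate steps (the inclusion $L^{2}(P)\subseteq\cH_{\textnormal{anti-c}}(P\leq M)$ and the passage from $W^{*}(\cH_{\textnormal{anti-c}}(P\leq M))=P$ to $\cH_{\textnormal{anti-c}}(P\leq M)\subseteq L^{2}(P)$, which does hold under the polar-decomposition definition of $W^{*}(Y)$ in Section \ref{sec: omega-mega solidity}) that the paper leaves implicit.
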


\begin{proof}
As noted in Proposition \ref{prop:singular subspaces contain weak intertwiners}, we have that
$\cH_{\textnormal{anti-c}}(P\leq M)^{\perp}$ embeds into $[L^{2}(P)\otimes L^{2}(P)]^{\oplus \infty}$
as a $P$-$P$ bimodule. Since $P$ is Pinsker by Theorem \ref{thm:classification of Pinsker algebras}, P\ref{item:preservation under normalizers} implies that
\[\cH_{\textnormal{anti-c}}(P\leq M)=L^{2}(P).\]
Thus,
\[
L^{2}(M) \ominus L^{2}(P)=\cH_{\textnormal{anti-c}}(P\leq M)^{\perp}
\]
embeds into $[L^{2}(P)\otimes L^{2}(P)]^{\oplus \infty}$.
\end{proof}

% Note that this verifies the \emph{coarseness conjecture} independently due to the first named author and Popa .

% Suppose $(M,\tau)$ is a tracial von Neumann algebra, and $A\leq M$ is a maximal abelian $*$-subalgebra. Write $A=L^{\infty}(X,\mu)$ for some compact Hausdorff space $X$ and some Borel probability measure $\mu$ on $X$. The representation
% \[\pi\colon C(X)\otimes C(X)\to B(L^{2}(M)\ominus L^{2}(A))\]
% given by
% \[\pi(f\otimes g)\xi=f\xi g,\]
% gives rise to a spectral measure $E$ on $X\times X$ whose marginals are Radon-Nikodym equivalent to $\mu$. We say that $\nu\in \Prob(X\times X)$ is a \emph{left/right measure} of $A\leq M$ if it is Radon-Nikodym equivalent to $E$. One often abuses terminology and refers to \emph{the} left/right measure to refer to any element of this equivalence class of measures.

 Suppose $(M,\tau)$ is a tracial von Neumann algebra, and $A\leq M$ is a maximal abelian $*$-subalgebra. Write $A=L^{\infty}(X,\mu)$ for some compact Hausdorff space $X$ and some Borel probability measure $\mu$ on $X$. Let $\pi\colon C(X)\otimes C(X)\to B(L^{2}(M)\ominus L^{2}(A))$ be as in the definition of the left/right measure given in the introduction.
Note that if $\nu$ is a left/right measure, and if $\phi\colon C(X)\otimes C(X)\to L^{\infty}(X\times X,\nu)$ is the map sending an element of $C(X)\otimes C(X)\cong C(X\times X)$ to its $L^{\infty}(\nu)$-equivalence class, then there is a unique normal $*$-isomorphism $\rho\colon L^{\infty}(X\times X,\nu)\to \overline{\pi(C(X)\otimes C(X))}^{SOT}$ so that
$\pi=\rho\circ \phi.$

\begin{cor}\label{left/right measure is ac wrt my tensor mu}
Let $M=L(\bF_{t})$ for some $t>1$. Suppose that $A\leq M$ is abelian and a maximal amenable subalgebra of $M$. Write $A=L^{\infty}(X,\mu)$ for some compact metrizable space $X$ and some Borel probability measure $\mu$ on $X$. Then the left/right measure of $A\leq M$ is absolutely continuous with respect to $\mu\otimes \mu$.
\end{cor}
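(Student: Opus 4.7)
The plan is to reduce directly to the coarseness result of Corollary~\ref{cor: coarseness conjuecture} by tracking spectral measure classes. By that corollary applied with $P = A$, there is an $A$-$A$ bimodular isometry
\[
V\colon L^{2}(M) \ominus L^{2}(A) \longrightarrow [L^{2}(A) \otimes L^{2}(A)]^{\oplus \infty}.
\]
Under the identification $L^{2}(A) \otimes L^{2}(A) \cong L^{2}(X \times X, \mu \otimes \mu)$, the analogue $\widetilde{\pi}$ of $\pi$ on the target is the pointwise multiplication representation of $C(X) \otimes C(X) = C(X \times X)$, acting diagonally on each copy of $L^{2}(X \times X)$. Its spectral measure $\widetilde{E}$ therefore has scalar measure class exactly $[\mu \otimes \mu]$: for a Borel set $B \subseteq X \times X$ we have $\widetilde{E}(B) = 0$ if and only if $(\mu \otimes \mu)(B) = 0$.

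The bimodularity of $V$ says precisely that $V \pi(f \otimes g) = \widetilde{\pi}(f \otimes g) V$ for all $f, g \in C(X)$. Since both representations extend uniquely to normal representations of the generated abelian von Neumann algebras, this intertwining passes to spectral projections: $V E(B) = \widetilde{E}(B) V$ for every Borel $B \subseteq X \times X$. Because $V$ is an isometry, $V^{*}V = I$, so
\[
E(B) = V^{*} V E(B) = V^{*} \widetilde{E}(B) V.
\]
If $(\mu \otimes \mu)(B) = 0$ then $\widetilde{E}(B) = 0$, and hence $E(B) = 0$. Thus $E$, and hence any left/right measure $\nu$ (which is Radon-Nikodym equivalent to $E$), is absolutely continuous with respect to $\mu \otimes \mu$.

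The only real content here is Corollary~\ref{cor: coarseness conjuecture}; the present argument is a routine translation of the $A$-$A$ bimodule containment into a statement about scalar spectral measure classes on $X \times X$, so I do not expect any substantive obstacle beyond what was already needed to establish coarseness.
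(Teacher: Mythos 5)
Your proof is correct and follows essentially the same route as the paper: both deduce the statement from Corollary~\ref{cor: coarseness conjuecture} by showing that the spectral measure $E$ of $\pi$ is dominated by the spectral measure of the coarse bimodule $[L^{2}(A)\otimes L^{2}(A)]^{\oplus\infty}$, whose class is $[\mu\otimes\mu]$. The paper phrases this via the scalar spectral measures $\ip{E(\cdot)\xi,\xi}$ and explicit $L^{1}$ densities $K=\sum_{n}|k_{n}|^{2}$ rather than by intertwining the projection-valued measures through the isometry $V$, but the content is identical.
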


\begin{proof}
Let $E\colon X\times X\to\Proj(L^{2}(M)\ominus L^{2}(A))$ be the spectral measure corresponding to the representation $\pi$ defined as in the paragraph before Corollary \ref{left/right measure is ac wrt my tensor mu}. For a bounded, Borel map $\phi\colon X\times X\to \bC$, we let
\[\widetilde{\pi}(\phi)=\int \phi\,dE.\]
By Corollary \ref{cor: coarseness conjuecture}, we know that $L^{2}(M)\ominus L^{2}(A)$ embeds into an infinite direct sum of $L^{2}(A)\otimes L^{2}(A)$ as an $A$-$A$ bimodule.  Thus for any vector $\xi\in L^{2}(M)\ominus L^{2}(A)$, we may find a sequence $(k_{n})_{n}\in L^{2}(X\times X)$ so that $\sum_{n}\int\|k_{n}\|_{2}^{2}<+\infty$ and
\begin{align*}
\ip{\pi(\phi)\xi,\xi}&=\sum_{n}\ip{\phi k_{n},k_{n}}\\
&=\sum_{n}\int \int \phi(x,y)|k_{n}(x,y)|^{2}\,d\mu(x)\,d\mu(y) \textnormal{ for all $\phi\in C(X\times X)$.}
\end{align*}
Set $K=\sum_{n}|k_{n}|^{2}$.
Then for every bounded, Borel $\phi\colon X\times X\to \bC$ we have
\[\ip{\widetilde{\pi}(\phi)\xi,\xi}=\int \int \phi(x,y)K(x,y)\,d\mu(x)\,d\mu(y).\]
In particular, if $B\subseteq \bC$ is Borel, and $(\mu\otimes \mu)(B)=0$, then
\[\|\widetilde{\pi}(1_{B})\xi\|_{2}^{2}=\ip{\widetilde{\pi}(1_{B})\xi,\xi}=0,\]
the first equality holding as $\widetilde{\pi}(1_{B})$ is a projection. Since this holds for every $\xi\in L^{2}(M)\ominus L^{2}(A)$ we see that $E(B)=\widetilde{\pi}(1_{B})=0$. So we have shown that $E$ is absolutely continuous with respect to $\mu\otimes \mu$.
\end{proof}

\subsection{Generalizations of strong solidity}\label{sec: omega-mega solidity}

Throughout this section, we will be interested in properties of $W^{*}(\cH_{\textnormal{anti-c}}(Q\leq M))$ for an inclusion $Q\leq M$ of tracial von Neumann algebras. Since $\cH_{\textnormal{anti-c}}(Q\leq M)$ is a subset of $L^{2}(M)$ and not of $M$, we need to explain what we mean by $W^{*}(\cH_{\textnormal{anti-c}}(Q\leq M))$. Every $\xi\in L^{2}(M)$ may be identified with the densely defined, closed, unbounded operator $L_{\xi}$ on $L^{2}(M)$; this $L_\xi$ is the closure of the operator $L_{\xi}^{o}$ which has $\dom(L_{\xi}^{o})=M$ and is defined by $L_{\xi}^{o}(x)=\xi x$ for all $x\in M$. For $\xi\in L^{2}(M)$, we let $L_{\xi}=V_{\xi}|L_{\xi}|$ be its polar decomposition. For $X\subseteq L^{2}(M)$, we then define
\[W^{*}(X)=W^{*}\bigl(\{V_{\xi}:\xi\in X\}\cup \{\phi(|L_{\xi}|):\xi\in X,\phi\colon [0,\infty)\to \bC \textnormal{ is bounded and Borel}\}\bigr).\]

Throughout this section, given a tracial von Neumann algebra $(M,\tau)$ we view $M\leq M^{\omega}$ by identifying it with the image of the constant sequences.

\begin{defn}
Let $(M,\tau)$ be a tracial von Neumann algebra. For a free ultrafilter $\omega\in\beta \bN\setminus \bN$, we say that $M$ is \emph{$\omega$-strongly solid} if $W^{*}(N_{M^{\omega}}(Q))\cap M$ is amenable for all diffuse, amenable $Q\leq M^{\omega}$. We say that $M$ is \emph{ultrasolid} if it is $\omega$-strongly solid for every free ultrafilter $\omega$.
We say that $M$ is \emph{spectrally solid} if for any diffuse, amenable $Q\leq M$ we have that $W^{*}(\cH_{\textnormal{anti-c}}(Q\leq M))$ is amenable. Given a free ultrafilter $\omega\in\beta\bN\setminus \bN$ we say that $M$ is \emph{spectrally $\omega$-solid} if for any diffuse, amenable $Q\leq M^{\omega}$ we have that $W^{*}(\cH_{\textnormal{anti-c}}(Q\leq M^{\omega}))\cap M$ is amenable. We say that $M$ is \emph{spectrally ultrasolid} if it is \emph{spectrally $\omega$-solid} for every free ultrafilter $\omega$.
\end{defn}

\begin{cor} \label{cor: spectrally ultrasolid}
Fix $t>1$.
\begin{enumerate}[(i)]
\item $L(\bF_{t})$ is spectrally ultrasolid.  \label{item:spectral ultrasolidity}
\item If $Q\leq L(\bF_{t})$,
% is diffuse,
$\omega\in\beta \bN\setminus\bN$ is a free ultrafilter and $Q'\cap L(\bF_{t})^{\omega}$ is diffuse, then $Q$ is amenable. \label{item:Gamma absorbtion}
\end{enumerate}
\end{cor}

\begin{proof}
For notational simplicity, set $M=L(\bF_{t})$.

(\ref{item:spectral ultrasolidity})
Fix $\omega\in \beta\bN\setminus \bN$.
Let $Q\leq M^{\omega}$ be diffuse and amenable. Then by Properties P\ref{I:omegafying in the second variable}, P\ref{I:monotonicity of 1 bounded entropy}, P\ref{item:preservation under normalizers}, and P\ref{I:hyperfinite has 1-bdd ent zero}:
\begin{align*}
 h(W^{*}(\cH_{\textnormal{anti-c}}(Q\leq M^{\omega}))\cap M:M)&=h(W^{*}(\cH_{\textnormal{anti-c}}(Q\leq M^{\omega}))\cap M:M^{\omega})\\
&\leq h(W^{*}(\cH_{\textnormal{anti-c}}(Q\leq M^{\omega})):M^{\omega})\\
&=h(Q:M^{\omega})\leq 0.
\end{align*}

So $h(W^{*}(\cH_{\textnormal{anti-c}}(Q\leq M^{\omega}))\cap M:M)\leq 0$ which implies, Theorem \ref{cor:classification of Pinsker},  that $W^{*}(\cH_{\textnormal{anti-c}}(Q\leq M^{\omega}))\cap M$ is amenable.

(\ref{item:Gamma absorbtion}):
Fix $A\leq Q'\cap M^{\omega}$ diffuse and abelian. Then $Q\leq W^{*}(\cN_{M^{\omega}}(A))\leq W^{*}(\cH_{\textnormal{anti-c}}(A\leq M^{\omega}))$, so this follows from (\ref{item:spectral ultrasolidity}).
\end{proof}

In particular, Corollary \ref{cor: spectrally ultrasolid} and Proposition \ref{prop:singular subspaces contain weak intertwiners} imply that if $P\leq L(\bF_{t})$ is a maximal amenable subalgebra, then $wI_{L(\bF_{t})}(P,P)\subseteq L^{2}(P)$, and so $P$ is \emph{strongly malnormal} in the sense of Popa \cite{PopaWeakInter}.

We can take several iterations of this procedure in the ultraproduct setting and it will still have amenable intersection with the diagonal copy of $L(\bF_{t})$.

\begin{cor}\label{cor: fun with ordinals}
Fix $t>1$, and a free ultrafilter $\omega\in \beta\bN\setminus\bN$. Suppose that $Q\leq L(\bF_{t})^{\omega}$ is diffuse and amenable. Suppose we are given von Neumann subalgebras $Q_{\alpha}$ defined for ordinals $\alpha$ which satisfy the following properties:
\begin{itemize}
    \item $Q_{0}=Q$,
    \item if $\alpha$ is a successor ordinal, then $Q_{\alpha-1}\leq Q_{\alpha}\leq W^{*}(\cH_{\textnormal{anti-c}}(Q_{\alpha-1}\leq L(\bF_{t})^{\omega}))$,
    \item if $\alpha$ is a limit ordinal, then $Q_{\alpha}=\overline{\bigcup_{\beta<\alpha}Q_{\beta}}^{SOT}$.
\end{itemize}
Then, for any ordinal $\alpha$ we have that $Q_{\alpha}\cap L(\bF_{t})$ is amenable.
\end{cor}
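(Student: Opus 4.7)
The plan is to prove the stronger statement that $Q_{\alpha}$ is a diffuse von Neumann subalgebra of $L(\bF_{t})^{\omega}$ satisfying $h(Q_{\alpha} : L(\bF_{t})^{\omega}) \leq 0$ by transfinite induction on $\alpha$, and then to descend to $L(\bF_{t})$ via Property P\ref{I:omegafying in the second variable}. Once the induction is done, monotonicity (Property P\ref{I:monotonicity of 1 bounded entropy}) together with P\ref{I:omegafying in the second variable} gives
\[
h(Q_{\alpha} \cap L(\bF_{t}) : L(\bF_{t})) \;=\; h(Q_{\alpha} \cap L(\bF_{t}) : L(\bF_{t})^{\omega}) \;\leq\; h(Q_{\alpha} : L(\bF_{t})^{\omega}) \;\leq\; 0,
\]
and Corollary \ref{Pinsker algebras of interpolated fgf} then furnishes amenability of $Q_{\alpha} \cap L(\bF_{t})$.

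For the induction, the base case is immediate: since $Q$ is diffuse and amenable, it is hyperfinite by Connes' theorem, so Property P\ref{I:hyperfinite has 1-bdd ent zero} yields $h(Q : L(\bF_{t})^{\omega}) \leq 0$. For a successor ordinal $\alpha$, the inductive hypothesis gives that $Q_{\alpha-1}$ is diffuse with $h(Q_{\alpha-1} : L(\bF_{t})^{\omega}) \leq 0$, so Property P\ref{item:preservation under normalizers} applies to produce
\[
h(W^{*}(\cH_{\textnormal{anti-c}}(Q_{\alpha-1} \leq L(\bF_{t})^{\omega})) : L(\bF_{t})^{\omega}) \;=\; h(Q_{\alpha-1} : L(\bF_{t})^{\omega}) \;\leq\; 0,
\]
and monotonicity transfers this bound to the intermediate subalgebra $Q_{\alpha}$. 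Diffuseness of $Q_{\alpha}$ is automatic since $Q_{\alpha} \supseteq Q_{\alpha-1}$. At a limit ordinal $\alpha$, the algebra $Q_{\alpha}$ is the SOT-closure of the upward directed union of diffuse subalgebras $\bigcup_{\beta<\alpha} Q_{\beta}$, hence remains diffuse, and Property P\ref{I:increasing limits of 1bdd ent first variable} supplies $h(Q_{\alpha} : L(\bF_{t})^{\omega}) = \sup_{\beta<\alpha} h(Q_{\beta} : L(\bF_{t})^{\omega}) \leq 0$.

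The one minor wrinkle is that $Q_{\alpha} \cap L(\bF_{t})$ need not be diffuse, so one cannot blindly invoke Corollary \ref{Pinsker algebras of interpolated fgf}. However, decomposing $Q_{\alpha} \cap L(\bF_{t})$ into its atomic and diffuse central direct summands, the atomic part is trivially amenable, while the diffuse part inherits $h \leq 0$ through Property P\ref{item:corner preserving vanishing} and is therefore amenable by Corollary \ref{Pinsker algebras of interpolated fgf}. I do not expect any genuine obstacle: the proof is a clean bookkeeping exercise in the axiomatic permanence properties P\ref{item:definition}--P\ref{item: corner inequality} collected in Section \ref{sec: 1 bounded entropy}, which is exactly the style of argument emphasized in the introduction.
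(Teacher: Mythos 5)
Your proof is correct and follows essentially the same route as the paper: transfinite induction using P\ref{I:hyperfinite has 1-bdd ent zero}, P\ref{item:preservation under normalizers}, P\ref{I:monotonicity of 1 bounded entropy}, and P\ref{I:increasing limits of 1bdd ent first variable} to get $h(Q_\alpha : L(\bF_t)^\omega)\leq 0$, then P\ref{I:omegafying in the second variable} and monotonicity to descend to $Q_\alpha\cap L(\bF_t)$, and finally the classification of subalgebras with vanishing $1$-bounded entropy. Your extra care about possible non-diffuseness of $Q_\alpha\cap L(\bF_t)$ is harmless but not needed, since Corollary \ref{Pinsker algebras of interpolated fgf} is stated for arbitrary subalgebras of $L(\bF_t)$.
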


\begin{proof}
One applies Properties P\ref{I:omegafying in the second variable}, P\ref{I:monotonicity of 1 bounded entropy}, P\ref{item:preservation under normalizers}, P\ref{I:increasing limits of 1bdd ent first variable}, and transfinite induction to see that
\[h(Q_{\alpha}:L(\bF_{t})^{\omega})=0\]
for any $\alpha$. It then follows by Property P\ref{I:omegafying in the second variable} that
\[h(Q_{\alpha}\cap L(\bF_{t}):L(\bF_{t})=h(Q_{\alpha}\cap L(\bF_{t}):L(\bF_{t})^{\omega})=h(Q_{\alpha}\cap L(\bF_{t}):L(\bF_{t})^{\omega})\leq h(Q_{\alpha}:L(\bF_{t})^{\omega})=0.\]
The Corollary now follows from Theorem \ref{thm:classification of Pinsker algebras}.
\end{proof}

\begin{cor}
Fix $t>1$. Then $L(\bF_{t})$ has the following Gamma stability property. Fix a free ultrafilter $\omega\in \beta\bN\setminus\bN$. If $Q\leq L(\bF_{t})^{\omega}$
% If $Q\leq L(\bF_{t})^{\omega}$ is diffuse,
and $Q'\cap L(\bF_{t})^{\omega}$ is diffuse, then $Q\cap L(\bF_{t})$ is amenable.
\end{cor}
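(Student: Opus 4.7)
The plan is to apply Corollary~\ref{cor: fun with ordinals} with just a single successor step, taking as seed a diffuse abelian subalgebra of $Q'\cap L(\bF_{t})^{\omega}$. The entire content of the argument is the elementary observation that $Q$ sits inside the von Neumann algebra generated by the normalizer of any such subalgebra, which in turn lies in its anti-coarse space.

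In detail, since $Q'\cap L(\bF_{t})^{\omega}$ is diffuse, I can choose a diffuse abelian $A\leq Q'\cap L(\bF_{t})^{\omega}$ (for instance, the von Neumann algebra generated by any self-adjoint element with diffuse spectral measure). For each unitary $u\in Q$ and each $a\in A$ we have $ua=au$, so $uAu^{*}=A$ and hence $u\in\cN_{L(\bF_{t})^{\omega}}(A)$. Because $Q$ is generated by its unitaries,
\[
Q\subseteq W^{*}\bigl(\cN_{L(\bF_{t})^{\omega}}(A)\bigr)\subseteq W^{*}\bigl(\cH_{\textnormal{anti-c}}(A\leq L(\bF_{t})^{\omega})\bigr),
\]
the second containment being the inclusion recalled in Section~\ref{sec: singular subspace} (the usual normalizer is contained in the one-sided quasi-normalizer, which sits inside the anti-coarse space).

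Now apply Corollary~\ref{cor: fun with ordinals} to the two-term chain $Q_{0}=A$, $Q_{1}=W^{*}(\cH_{\textnormal{anti-c}}(A\leq L(\bF_{t})^{\omega}))$. The seed $Q_{0}$ is diffuse abelian, hence diffuse and amenable, and $Q_{1}$ is by construction a valid successor step. The corollary therefore yields that $Q_{1}\cap L(\bF_{t})$ is amenable, and since $Q\subseteq Q_{1}$ we conclude
\[
Q\cap L(\bF_{t})\subseteq Q_{1}\cap L(\bF_{t}),
\]
which is amenable because amenability passes to von Neumann subalgebras. The argument has no substantive obstacle: all the real work has already been packaged into Corollary~\ref{cor: fun with ordinals}, and the only new ingredient is the principle that commutation with $A$ upgrades to normalization of $A$.
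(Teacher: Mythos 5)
Your proof is correct and is essentially the paper's own argument: both pick a diffuse abelian $A\leq Q'\cap L(\bF_{t})^{\omega}$, note $Q\leq W^{*}(\cN_{L(\bF_{t})^{\omega}}(A))\leq W^{*}(\cH_{\textnormal{anti-c}}(A\leq L(\bF_{t})^{\omega}))$, and invoke Corollary \ref{cor: fun with ordinals} with seed $Q_{0}=A$. The only (immaterial) difference is that the paper takes the successor step $Q_{1}=Q\vee A$ while you take the maximal admissible choice $Q_{1}=W^{*}(\cH_{\textnormal{anti-c}}(A\leq L(\bF_{t})^{\omega}))$.
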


\begin{proof}
Fix $A\leq Q'\cap L(\bF_{t})^{\omega}$ diffuse and abelian.
Note that
\[Q\leq W^{*}(\cN_{L(\bF_{t})^{\omega}}(A))\leq W^{*}(\cH_{\textnormal{anti-c}}(A\leq L(\bF_{t})^{\omega})).\]
Applying Corollary \ref{cor: spectrally ultrasolid} with $Q_{0}=A$, and $Q_{\alpha}=Q\vee A$ for all $\alpha\geq 1$ we see that $Q\cap L(\bF_{t})\leq (A\vee Q)\cap L(\bF_{t})$ is amenable.
\end{proof}

\subsection{Intertwining properties for Pinsker algebras}\label{sec:intertwine}

In this section, we explore how Pinsker algebras behave with respect to intertwining properties in the sense of Popa \cite{PopaL2Betti,PopaStrongRigidity}.

\begin{thm} \label{thm: Pinsker dichotomy}
Let $(M,\tau)$ be a tracial von Neumann algebra, and let $P,Q\leq M$ be Pinsker. Then exactly one of the following occurs:
\begin{itemize}
    \item either there are nonzero projections $e\in P,f\in Q$ and a unitary $u\in \mathcal{U}(M)$ so that $u(ePe)u^{*}=fQf$,
    \item or $wI_{M}(Q,P)=\{0\}$.
\end{itemize}
\end{thm}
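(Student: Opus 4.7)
The plan is to establish the contrapositive: assuming $wI_M(Q,P)\neq\{0\}$, we produce nonzero projections $e\in P$, $f\in Q$ and a unitary $u\in\cU(M)$ with $u(ePe)u^*=fQf$. The converse is immediate, since such a $u$ yields $ue\in I_M(fQf,P)\subseteq wI_M(Q,P)\setminus\{0\}$ (the bimodule $\overline{fQf\cdot ue\cdot P}$ is contained in $uP$, hence has right $P$-dimension one), confirming that the two alternatives are mutually exclusive.

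By hypothesis there is a diffuse $Q_0\leq Q$ with $I_M(Q_0,P)\neq\{0\}$, so Popa's intertwining-by-bimodules theorem \cite{PopaL2Betti,PopaStrongRigidity} produces projections $p\in Q_0$, $q\in P$, a unital $*$-homomorphism $\theta\colon pQ_0p\to qPq$, and a nonzero partial isometry $v\in pMq$ satisfying $xv=v\theta(x)$ for all $x\in pQ_0p$, together with $vv^*\in(pQ_0p)'\cap pMp$ and $v^*v\in\theta(pQ_0p)'\cap qMq$. Set $B=\theta(pQ_0p)\subseteq qPq$, which is diffuse.

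The first key step is to upgrade the support projections using the Pinsker property of $P$ and $Q$: we show $vv^*\in Q$ and $v^*v\in P$. Since $vv^*$ commutes with $pQ_0p$, it lies in the one-sided quasi-normalizer $q^{1}\cN_{pMp}(pQ_0p)$, and hence in $\cH_{\textnormal{anti-c}}(pQ_0p\leq pMp)$ (Section~\ref{sec: singular subspace}). By \textbf{P\ref{item:preservation under normalizers}} combined with monotonicity \textbf{P\ref{I:monotonicity of 1 bounded entropy}} and Proposition~\ref{prop: amplifications of Pinsker}\,(ii),
\[
h(W^*(pQ_0p,vv^*):pMp)\leq h(pQ_0p:pMp)\leq h(pQp:pMp)\leq 0.
\]
Since $pQp\cap W^*(pQ_0p,vv^*)\supseteq pQ_0p$ is diffuse, subadditivity \textbf{P\ref{I:subadditivity of 1 bdd ent}} gives $h(W^*(pQp,vv^*):pMp)\leq 0$, so Pinsker maximality of $pQp$ in $pMp$ forces $vv^*\in pQp$. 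Set $f:=vv^*\in Q$; the symmetric argument applied to $v^*v$ and $qPq$ yields $e:=v^*v\in P$.

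With $e\in P$ and $f\in Q$, the map $\Phi\colon eMe\to fMf$, $\Phi(x)=vxv^*$, is a trace-preserving $*$-isomorphism of tracial von Neumann algebra inclusions; hence $h(\Phi(ePe):fMf)=h(ePe:eMe)\leq 0$ by Proposition~\ref{prop: amplifications of Pinsker}\,(ii), and the diffuse algebra $\Phi(Be)=(pQ_0p)f$ lies inside both $\Phi(ePe)$ and $fQf$. A second application of \textbf{P\ref{I:subadditivity of 1 bdd ent}}, together with the Pinsker property of $fQf$ in $fMf$, forces $\Phi(ePe)\subseteq fQf$, and the symmetric argument on $\Phi^{-1}$ gives the reverse inclusion, so $v(ePe)v^*=fQf$. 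Finally, since $e\sim f$ via $v$ in the finite von Neumann algebra $M$, the complements $1-e$ and $1-f$ are also Murray--von Neumann equivalent; choosing a partial isometry $w$ with $ww^*=1-f$ and $w^*w=1-e$, the unitary $u:=v+w\in\cU(M)$ satisfies $u(ePe)u^*=v(ePe)v^*=fQf$, delivering the first alternative. The principal obstacle --- which the Pinsker hypothesis handles automatically in the first step, via the anti-coarse permanence property --- is the usual intertwining-theory nuisance that $vv^*$ a priori lives only in $(pQ_0p)'\cap pMp$ rather than in $Q$ itself.
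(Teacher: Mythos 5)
Your proposal is correct and follows essentially the same route as the paper: run Popa's intertwining, upgrade the support projections $vv^*$ and $v^*v$ into $Q$ and $P$ via the permanence properties of $h$ and Pinsker maximality, identify the corners, and extend $v$ to a unitary using finiteness. The only (harmless) differences are that you place $vv^*$ in the anti-coarse space through the one-sided quasi-normalizer where the paper invokes the normalizer argument of \cite{Pineapple}, and that you spell out both inclusions for $v(ePe)v^*=fQf$ and the mutual exclusivity of the two alternatives, which the paper leaves implicit.
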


\begin{proof}
Suppose that $wI_{M}(Q,P)\neq \{0\}$, so there is a diffuse $Q_{0}\leq Q$ with $Q_{0}\prec P$. This means there are nonzero projections $f_{0}\in Q_{0},e_{0}\in P$, a unital $*$-homomorphism $\Theta\colon f_{0}Q_{0}f_{0}\to e_{0}Pe_{0}$, and a nonzero partial isometry $v\in M$ so that:
\begin{itemize}
    \item $xv=v\Theta(x)$ \textnormal{ for all $x\in f_{0}Q_{0}f_{0}$},
    \item $vv^{*}\in (f_{0}Q_{0}f_{0})'\cap f_{0}Mf_{0},$
    \item $v^{*}v\in \Theta(f_{0}Q_{0}f_{0})'\cap e_{0}Me_{0}$.
\end{itemize}
By Property P\ref{item:preservation under normalizers}, we have  $h(W^{*}(\cN_{f_{0}Mf_{0}}(f_{0}Q_{0}f_{0})):f_{0}Mf_{0})\leq h(f_{0}Qf_{0}:f_{0}Mf_{0})\leq 0$ and since $f_{0}Qf_{0}\leq f_{0}Mf_{0}$ is Pinsker by Proposition \ref{prop: amplifications of Pinsker}, we know that $\cN_{f_{0}Mf_{0}}(f_{0}Q_{0}f_{0})\subseteq \cU(f_{0}Qf_{0})$. Similarly, $\cN_{e_{0}Me_{0}}(e_{0}Pe_{0})=\cU(e_{0}Pe_{0})$. It then follows as in \cite[Theorem 6.8]{Pineapple} that $f=vv^{*}\in Q, e=v^{*}v\in P$.

Since $f\in Q$, we have that $v^{*}(fQf)v$ is a subalgebra of $eMe$. Moreover, conjugation by $v$ implements an isomorphism between the inclusion $fQf\leq fMf$ and the inclusion $v^{*}(fQf)v\leq eMe,$ which implies that $h(v^{*}(fQf)v:eMe)\leq 0$. Moreover,
\[v^{*}(fQf)v\cap ePe\supseteq e\Theta(f_{0}Qf_{0}).\]
Since $\Theta(f_{0}Qf_{0})$ is the image of a diffuse subalgebra under a normal $*$-homomorphism, it follows that it is diffuse.
Since $ePe$ is Pinsker by Proposition \ref{prop: amplifications of Pinsker} (\ref{item: compressions of Pinsker}), and $h(v^{*}(fQf)v:eMe)\leq 0$, this forces $v^{*}(fQf)v=ePe$ by Property P\ref{I:subadditivity of 1 bdd ent}. Since $M$ is finite, there is a unitary $u\in \mathcal{U}(M)$ so that $fu=v$. Then $u^{*}(fQf)u=ePe$, as desired.
\end{proof}

In the case where $P$ and $Q$ are factors, the first option in the dichotomy can be strengthened to saying that $P$ and $Q$ are unitarily conjugate.

\begin{cor} \label{cor: Pinsker dichotomy factor case}
Let $(M,\tau)$ be a tracial von Neumann algebra.  Let $P$ and $Q$ be Pinsker subalgebra of $M$ such that $P$ and $Q$ are factors.  Then either $P$ and $Q$ are unitarily conjugate or $wI_M(P,Q) = 0$.
\end{cor}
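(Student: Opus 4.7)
The plan is to apply Theorem \ref{thm: Pinsker dichotomy} (swapping the roles of $P$ and $Q$ if needed) to obtain the alternative: either $wI_M(P,Q) = 0$, and we are done, or there exist nonzero projections $e \in P$, $f \in Q$ and a unitary $u \in \mathcal{U}(M)$ with $u(ePe)u^* = fQf$. In the latter case, comparing the units of both sides forces $ueu^* = f$, so $\tau(e) = \tau(f)$ and the $*$-isomorphism $\alpha = \Ad(u) : ePe \to fQf$ is trace-preserving. It remains to upgrade this isomorphism-on-corners to a unitary conjugation $WPW^* = Q$ for some $W \in \mathcal{U}(M)$.

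Since $ePe \cong fQf$ as corners of finite factors, $P$ and $Q$ must be factors of the same type. Exploiting factoriality, I plan to choose an integer $n$ and partial isometries $v_1, \ldots, v_n \in P$ with $v_i^* v_i \leq e$ and $\{v_i v_i^*\}_i$ pairwise orthogonal with $\sum_i v_i v_i^* = 1$; such a system exists in any finite factor by Murray--von Neumann equivalence of projections of equal trace (choosing $n$ so that the traces involved are realizable in $P$). Since $\alpha$ is trace-preserving and $Q$ has the same type as $P$, I can then find partial isometries $w_1, \ldots, w_n \in Q$ with $w_i^* w_i = \alpha(v_i^* v_i) \leq f$ and $\{w_i w_i^*\}_i$ pairwise orthogonal with $\sum_i w_i w_i^* = 1$.

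The unitary I propose is $W = \sum_i w_i u v_i^*$. A routine verification using $v_i^* v_j = 0$ and $w_i^* w_j = 0$ for $i \neq j$ (which follow from orthogonality of the range projections), together with $w_i^* w_i = u(v_i^* v_i) u^*$, gives $W W^* = \sum_i w_i w_i^* = 1$ and $W^* W = \sum_i v_i v_i^* = 1$, so $W \in \mathcal{U}(M)$. For any $x \in P$, the element $v_i^* x v_j$ lies in $ePe$, hence $u(v_i^* x v_j) u^* \in fQf \subseteq Q$, which shows $W x W^* \in Q$; by a symmetric argument $W^* Q W \subseteq P$, and thus $W P W^* = Q$. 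The main obstacle is the simultaneous matching of the two orthogonal decompositions under $\alpha$, which is enabled precisely by factoriality of $P$ and $Q$ together with the equality $\tau(e) = \tau(f)$; this also silently handles the type $\text{I}$ case, since isomorphism of corners forces $P$ and $Q$ to be type $\text{I}_k$ for the same $k$ when they are not $\text{II}_1$.
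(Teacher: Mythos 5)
Your proposal is correct and follows essentially the same route as the paper: apply Theorem \ref{thm: Pinsker dichotomy}, then upgrade unitary conjugacy of corners to global unitary conjugacy by using factoriality to decompose $1$ into orthogonal projections equivalent to subprojections of $e$ (resp.\ $f$) and patching the partial isometries into a single unitary $W=\sum_i w_i u v_i^*$. The paper isolates this second step as a standalone folklore proposition (normalizing so that $\tau(p)=1/n$ and all pieces are exactly equivalent to $p$, rather than allowing subprojections as you do), but the argument is the same.
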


This follows from the general fact that if $M$ is a tracial von Neumann algebra and $P$ and $Q \leq M$ are factors with unitarily conjugate corners, then they are unitarily conjugate.  This is a folklore result and we include the proof here for completeness.

\begin{prop}
If $Q,P$ are subalgebras of a tracial von Neumann algebra $(M,\tau)$ with unitarily conjugate corners, and if $P,Q$ are factors, then $P,Q$ are unitarily conjugate.
\end{prop}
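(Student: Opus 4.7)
The plan is to extend the unitary $u \in \mathcal{U}(M)$ implementing $u(ePe)u^* = fQf$ to a unitary $U \in \mathcal{U}(M)$ with $UPU^* = Q$, via a matrix-units decomposition of $1_M$ coming from the factoriality of $P$ and $Q$. Since unitary conjugation preserves identities of subalgebras, we must have $ueu^* = f$, and therefore $\tau(e) = \tau(f) =: \alpha$.

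Using that $P$ is a finite factor containing $e$, I would choose partial isometries $v_1, \ldots, v_N \in P$ whose range projections $e_i = v_i v_i^*$ are mutually orthogonal and sum to $1$, with $v_i^* v_i = e$ for $i < N$ and with $v_N^* v_N = e'$ a possibly proper sub-projection of $e$ (of trace $1 - (N-1)\alpha$). The parallel construction in $Q$ yields $w_1, \ldots, w_N \in Q$; the integer $N = \lceil 1/\alpha \rceil$ and the matching remainder trace $\tau(f') = \tau(e')$ are forced by $\tau(e) = \tau(f)$. The candidate unitary is
\[ U \;=\; \sum_{i=1}^{N} w_i\, u\, v_i^*. \]

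The key compatibility needed for $U$ to work is $u e' u^* = f'$. Since these are two projections of equal trace inside the finite factor $fQf$, they are unitarily conjugate in $fQf$, and absorbing such an inner unitary (extended by $1-f$ to all of $M$) into $u$ arranges the compatibility without disturbing $u(ePe)u^* = fQf$ or the relation $ueu^* = f$. Once this holds, routine partial-isometry manipulations using $v_i^* v_j = \delta_{ij}\, v_i^*v_i$ (and the analogue for the $w_j$) together with $ueu^* = f$ and $ue'u^* = f'$ give $U^*U = \sum_i e_i = 1$ and $UU^* = \sum_i f_i = 1$. To see $UPU^* \subseteq Q$, expand $p \in P$ as $p = \sum_{i,j} v_i (v_i^* p v_j) v_j^*$, observe that each $v_i^* p v_j$ lies in $ePe$ (since $v_i e = v_i$), and apply $u(ePe)u^* = fQf$ together with $w_i, w_j \in Q$; the reverse inclusion $Q \subseteq UPU^*$ follows by the symmetric argument applied with $u^*$. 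The main technical obstacle is the fractional remainder when $1/\alpha \notin \mathbb{N}$, which is precisely what the compatibility adjustment inside the factor $fQf$ resolves.
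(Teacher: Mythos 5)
Your argument is correct and follows essentially the same route as the paper: tile the identity in $P$ and in $Q$ by matrix units subordinate to $e$ and $f$, and sum the translated copies of the conjugating isometry to produce the global unitary. The only difference is in handling the remainder: the paper first shrinks $e$ (and correspondingly $f$) so that $\tau(e)=1/n$ for an integer $n$, making the tiling exact, whereas you keep $e$ and absorb an inner unitary of the finite factor $fQf$ to align $ue'u^*$ with $f'$ --- both devices work.
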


\begin{proof}
Choose nonzero projections $p\in P,q\in Q$ and a unitary partial isometry $v\in M$ with $v^{*}v=p,vv^{*}=q$ and
\[vPv^{*}=qQq.\]
Since $P$ is a factor, we may shrink $p,q$ if necessary to assume that $\tau(p)=1/n$ for some integer $n$.
Choose projections $p_{1},\cdots,p_{n}$ in $P$ which are pairwise orthogonal with $\tau(p_{j})=1/n$ for all $j$, and with $p_{1}=p$. Choose analogous projections $q_{1},\cdots,q_{n}$ in $Q$ with $q_{1}=q$.  Since $P,Q$ are factors for $2\leq j\leq n$ we may choose partial isometries $a_{j},b_{j}$ in $P,Q$ so that $a_{j}^{*}a_{j}=p,a_{j}a_{j}^{*}=p_{j}$, $b_{j}^{*}b_{j}=q,b_{j}b_{j}^{*}=q_{j}$, and set $a_{1}=p,b_{1}=q$.
Finally, let
\[u=\sum_{i}b_{i}va_{i}^{*}.\]
Then $u$ is a unitary, and if $x\in P$, then $u(a_{i}a_{i}^{*}xa_{j}a_{j}^{*})u^{*}\in Q$ for all $1\leq i,j\leq n$. Using that any $x\in P$ is equal to $\sum_{i,j}a_{i}a_{i}^{*}xa_{j}a_{j}^{*}$ we see that $uPu^{*}\subseteq Q$. By a symmetric argument, $u^{*}Qu\subseteq P$.
\end{proof}

% \begin{cor}

% \end{cor}

The combination of the above results allows us to deduce Theorem \ref{thm:Pinkser dichotomy intro} as follows.

\begin{proof}[Proof of Theorem \ref{thm:Pinkser dichotomy intro}]
The fact that either (\ref{item:option 1 intro}) or (\ref{item:option 2 intro}) of Theorem \ref{thm:Pinkser dichotomy intro} hold follows from Theorems \ref{thm: Pinsker dichotomy} and \ref{thm:classification of Pinsker algebras}.
The ``in particular" part follows from Corollary \ref{cor: Pinsker dichotomy factor case} and Theorem \ref{thm:classification of Pinsker algebras}.
\end{proof}

\begin{example} \label{ex: no intertwining 1}
The work of Dykema \cite{DykemaFreeProductsHyper} implies that $L(\bF_2) \cong L^\infty[0,1] * \mathcal{R}$.  By the work of Popa \cite{PopaMaximalAmenable}, $L^\infty[0,1]$ and $\mathcal{R}$ are maximal amenable subalgebras in $L(\bF_2)$.  Hence, they are Pinsker subalgebras by Theorem \ref{thm:classification of Pinsker algebras}.  (Alternatively, \cite{FreePinsker} shows directly that they are Pinsker subalgebras.)  Therefore, given any automorphism $\phi$ of $L(\bF_2)$, by Theorem \ref{thm: Pinsker dichotomy}, $L^\infty[0,1]$ and $\phi(\mathcal{R})$ in $L(\bF_2)$ either have zero intertwiners or have unitarily conjugate corners.  They do not have isomorphic corners, and therefore  $wI(L^\infty[0,1],\phi(\mathcal{R})) = 0$.
\end{example}

Since the free product of any two amenable separable tracial von Neumann algebras is $L(\bF_{2})$, we can generalize this example quite a bit.  To handle these more general examples, we want a strengthening of Theorem \ref{thm: Pinsker dichotomy} along the lines of Corollary \ref{cor: Pinsker dichotomy factor case} that does not assume that $P$ and $Q$ are factors.  First, we use a maximality argument to make the projection $e$ in Theorem \ref{thm: Pinsker dichotomy} as large as possible.

\begin{thm} \label{thm: Pinsker dichotomy 2}
Let $(M,\tau)$ be a tracial von Neumann algebra, and let $P, Q \leq M$ be Pinsker subalgebras.  There exist projections $e \in P$ and $f \in Q$, and a partial isometry $v \in M$ from $e$ to $f$, such that the following hold:
\begin{enumerate}
    \item $v(ePe)v^* = fQf$.
    \item If $e \neq 1$ and $u$ is a unitary such that $v = ue = fu$, then
    \[
    wI_{(1-f)M(1-f)}((1-f)Q(1-f),u(1 - e)P(1 - e)u^*) = 0.
    \]
\end{enumerate}
\end{thm}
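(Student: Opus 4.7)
The plan is to apply Zorn's lemma to the set $\mathcal{T}$ of triples $(e,f,v)$ where $e\in P$, $f\in Q$ are projections and $v\in M$ is a partial isometry with $v^*v=e$, $vv^*=f$, and $v(ePe)v^*=fQf$. Order $\mathcal{T}$ by $(e_1,f_1,v_1)\leq(e_2,f_2,v_2)$ iff $v_2 e_1=v_1$, which forces $e_1\le e_2$, $f_1\le f_2$, and compatibility of the induced corner isomorphisms. The trivial triple $(0,0,0)$ lies in $\mathcal{T}$. For any chain, the net $(v_\alpha)$ is Cauchy in $\|\cdot\|_2$ since $\|v_\alpha-v_\beta\|_2^2=\tau(e_\beta)-\tau(e_\alpha)$ for $\alpha\le\beta$, so its SOT limit $v$ is a partial isometry from $e:=\sup_\alpha e_\alpha\in P$ to $f:=\sup_\alpha f_\alpha\in Q$, and $v(ePe)v^*=fQf$ by passage to SOT limits in the approximating equalities. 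Zorn's lemma yields a maximal $(e,f,v)\in\mathcal{T}$, which trivially satisfies item (1).

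For item (2), assume $e\ne 1$ and let $u$ be a unitary with $v=ue=fu$. Such $u$ exists because $1-e$ and $1-f$ have equal trace $1-\tau(e)$ in the finite algebra $M$, hence are Murray--von Neumann equivalent, so $u:=v+w$ works for any partial isometry $w$ from $1-e$ to $1-f$. By Proposition \ref{prop: amplifications of Pinsker}(\ref{item: compressions of Pinsker}), $(1-f)Q(1-f)$ and $(1-e)P(1-e)$ are Pinsker subalgebras of $(1-f)M(1-f)$ and $(1-e)M(1-e)$ respectively; since conjugation by $u$ is an inner automorphism of $M$ taking $1-e$ to $1-f$, it restricts to a trace-preserving $*$-isomorphism $(1-e)M(1-e)\to (1-f)M(1-f)$ sending $(1-e)P(1-e)$ to $u(1-e)P(1-e)u^*$, which is therefore Pinsker in $(1-f)M(1-f)$.

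Assume for contradiction that $wI_{(1-f)M(1-f)}((1-f)Q(1-f),\,u(1-e)P(1-e)u^*)\ne 0$. Applying Theorem \ref{thm: Pinsker dichotomy} inside $(1-f)M(1-f)$ produces nonzero projections $e''\in u(1-e)P(1-e)u^*$, $f''\in (1-f)Q(1-f)$, and a unitary $u''\in\mathcal{U}((1-f)M(1-f))$ with $u''(e''\cdot u(1-e)P(1-e)u^*\cdot e'')u''^*=f''Qf''$. Setting $e_0:=u^*e''u\in(1-e)P(1-e)$ and $v_1:=u''ue_0\in M$, one verifies $v_1^*v_1=e_0$, $v_1v_1^*=f''$, and $v_1(e_0Pe_0)v_1^*=f''Qf''$ by direct calculation, using $u^*(1-f)u=1-e$ and $u''e''u''^*=f''$.

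To derive the contradiction, I would show $(e+e_0,f+f'',v+v_1)\in\mathcal{T}$ strictly extends $(e,f,v)$. The orthogonality $v^*v_1=u^*(fu'')ue_0=0$ (since $u''\in(1-f)M(1-f)$ forces $fu''=0$) and its adjoint give $(v+v_1)^*(v+v_1)=e+e_0$ and $(v+v_1)(v+v_1)^*=f+f''$; the compatibility $(v+v_1)e=v$ is immediate. The main obstacle and the delicate step is the algebra equality $(v+v_1)((e+e_0)P(e+e_0))(v+v_1)^*=(f+f'')Q(f+f'')$: the diagonal blocks reduce to the known equalities, but the off-diagonal part demands $v(ePe_0)v_1^*=fQf''$. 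Here I would use the Pinsker property of $Q$: property P\ref{item:preservation under normalizers} gives $h(W^*(\cH_{\textnormal{anti-c}}(Q\le M)):M)=h(Q:M)\le 0$, so Pinsker maximality forces $\cH_{\textnormal{anti-c}}(Q\le M)=L^2(Q)$. Each $vpv_1^*$ for $p\in ePe_0$ carries a $(fQf,f''Qf'')$-bimodule structure transported via $v,v_1$ from the $(ePe,e_0Pe_0)$-bimodule structure on $ePe_0\subseteq L^2(P)$; the rigidity of this non-coarse bimodule, together with the vanishing of the anti-coarse complement of $Q$ in $L^2(M)$, should force $vpv_1^*\in L^2(Q)\cap M=Q$, with the reverse inclusion following symmetrically from the Pinsker property of $P$. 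This contradicts maximality of $(e,f,v)$.
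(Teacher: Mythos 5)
Your Zorn's lemma setup and the reduction via Theorem \ref{thm: Pinsker dichotomy} match the paper's argument, but there is a genuine gap exactly at the step you yourself flag as ``the main obstacle'': the off-diagonal identity $v(ePe_0)v_1^* = fQf''$. Your proposed resolution --- transporting a bimodule structure along $v,v_1$ and invoking $\cH_{\textnormal{anti-c}}(Q\leq M)=L^2(Q)$ to ``force'' $vpv_1^*\in Q$ --- is not an argument. For $p\in ePe_0$ the element $vpv_1^*$ lies in $fMf''$, and nothing you have established constrains where the off-diagonal corner $ePe_0$ lands: the two corner isomorphisms $\mathrm{Ad}(v)$ and $\mathrm{Ad}(v_1)$ need not glue to an isomorphism of $(e+e_0)P(e+e_0)$ onto $(f+f'')Q(f+f'')$ for general subalgebras, so some use of the Pinsker hypothesis beyond the diagonal blocks is essential, and your sketch does not supply it.

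The paper closes this gap by sidestepping the off-diagonal computation entirely, using uniqueness of Pinsker extensions of a diffuse subalgebra. By Proposition \ref{prop: amplifications of Pinsker}(\ref{item: compressions of Pinsker}), $(f+f'')Q(f+f'')$ is Pinsker in $(f+f'')M(f+f'')$ and $(e+e_0)P(e+e_0)$ is Pinsker in $(e+e_0)M(e+e_0)$; conjugating by the partial isometry $v+v_1$ shows that $(v+v_1)\bigl((e+e_0)P(e+e_0)\bigr)(v+v_1)^*$ is also Pinsker in $(f+f'')M(f+f'')$. These two Pinsker algebras both contain the diffuse algebra $fQf \oplus f''Qf'' = (v+v_1)[ePe \oplus e_0Pe_0](v+v_1)^*$, so by Property P\ref{I:subadditivity of 1 bdd ent} their join still has $1$-bounded entropy $\leq 0$, and maximality forces them to coincide. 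That equality is precisely the membership of $(e+e_0,f+f'',v+v_1)$ in your set $\mathcal{T}$, giving the contradiction with maximality. If you replace your bimodule-rigidity paragraph with this uniqueness argument, the proof is complete; the rest of your write-up (the partial order, the chain argument, and the bookkeeping with $u$, $e_0$, $v_1$) is sound and essentially identical to the paper's.
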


\begin{proof}
\textbf{Step 1:} We show that there exists a choice of $e$ and $f$ satisfying (1) that is maximal, in the sense that no strictly larger projections satisfy (1).  To this end, we will apply Zorn's lemma to the set of triples $(e,f,v)$ where $e \in P$ and $f \in Q$ are projections and $v$ is a partial isometry from $e$ to $f$ such that $v(ePe)v^* = fQf$.  The partial order on this set will be given by $(e,f,v) \leq (e',f',v')$ if $e \leq e'$, $f \leq f'$,
and $fv'e = v$.
Note that $fv'e=v$ implies that $e=v^{*}v=e(v')^{*}fv'e$ which in turn implies
\[|v'e-v|^{2}=e+e(v')^{*}v'e-2\rea(e(v')^{*}v)=e+e(v')^{*}v'e-2\rea(e(v')^{*}fv'e)=e(v')^{*}v'e-e\leq 0.\]
So $v'e=v$ and similarly $fv'=v$.
One checks readily that the above order is a partial order.  So it remains to show that every increasing chain $\{(e_\alpha,f_\alpha,v_\alpha)\}_{\alpha \in I}$ has an upper bound.  Let $e = \sup_\alpha e_\alpha$ and $f = \sup_\alpha f_\alpha$.  Note that for $\alpha \leq \beta$, we have
\[v_{\beta}-v_{\alpha}=v_{\beta}e_{\beta}-v_{\alpha}=v_{\beta}(e_{\beta}-e_{\alpha}),\]
using our previous observation that $\alpha\leq \beta$ implies that $v_{\alpha}=v_{\beta}e_{\alpha}$.
% $v_\beta - v_\alpha = (e_\beta - e_\alpha) v_\beta (e_\beta - e_\alpha) + e_\alpha v_\beta (e_\beta - e_\alpha) + (e_\beta - e_\alpha) v_\beta e_\alpha$.
Since $e_\alpha$ converges to $e$,
% and $f_\alpha$ converges to $f$
 it follows that $(e_\alpha)_{\alpha \in I}$
 % and $(f_\alpha)_{\alpha \in I}$ are
 is Cauchy in $L^2(M)$; this in turn implies that $(v_\alpha)_{\alpha \in I}$ is Cauchy in $L^2(M)$ and hence converges to some $v \in L^2(M)$.  This $v$ is necessarily also a partial isometry, and $v_\alpha = f_\alpha v e_\alpha$.
Moreover, we have $v (ePe) v^* \subseteq fQf$ because for each $x \in P$, we have $v(e_\alpha x e_\alpha) v^* = v_\alpha (e_\alpha x e_\alpha) v_\alpha^* \in f_\alpha Q f_\alpha \subseteq f Q f$.  So taking the limit over $\alpha$, we get $v(exe)v^* \in fQf$.  The same reasoning shows that $v^* fQf v \subseteq ePe$, and hence $v(ePe) v^* = fQf$.  Hence, by Zorn's lemma, there exists a maximal choice of $e$, $f$, and $v$.

\textbf{Step 2:}  Now we will apply Theorem \ref{thm: Pinsker dichotomy} to show that the maximal $e$, $f$, and $v$ satisfy (2).  Let $u$ be a unitary such that $v = ue = fu$.  Suppose for contradiction that $wI_{(1-f)M(1-f)}((1-f)Q(1-f),u(1-e)P(1-e)u^*) \neq \{0\}$.  By Proposition \ref{prop: amplifications of Pinsker} (ii), $(1-f)Q(1-f)$ is Pinsker in $(1-f)M(1-f)$.  Note $uPu^*$ is Pinsker in $M$ and hence $(1-f)uPu^*(1-f) = u(1-e)P(1-e)u^*$ is Pinsker in $(1-f)M(1-f)$.  By Theorem \ref{thm: Pinsker dichotomy}, $wI_{(1-f)M(1-f)}((1-f)Q(1-f),u(1-e)P(1-e)u^*) \neq \{0\}$ implies that there exist projections $e_0 \in u(1-e)P(1-e)u^*$ and $f_0 \in (1-f)Q(1-f)$ and a partial isometry $v_0$ from $e_0$ to $f_0$ that conjugates $e_0u(1-e)P(1-e)u^*e_0$ to $f_0(1 - f)Q(1-f)f_0$.  Write $e' = u^*e_0u$, so that $e'$ is a projection in $P$ with $e' \leq 1-e$.  Let $f' = f_0$ which is a projection in $Q$ with $f' \leq 1 -f $.  Let $v' = v_0 u$, which is a partial isometry in $M$ sending $e'$ to $f'$.  Then
\[
v'e'Pe'(v')^* =v_{0} e_0u(1 - e)P(1 - e)u^*e_0 v_{0}^{*} = f_{0}(1-f)Q(1-f)f_{0} = f'Qf'.
\]
By Proposition \ref{prop: amplifications of Pinsker} (ii), $(f + f')Q(f + f')$ is Pinsker in $(f+f')M(f+f')$, and $(e+e')P(e+e')$ is Pinsker in $(e+e')M(e+e')$, so that $(v+v')(e+e')P(e+e')(v+v')^* = (f+f')(v+v')P(v+v')^*(f+f')$ is Pinsker in $(f+f')M(f+f')$.  Now $(f+f')Q(f+f')$ and $(v+v')(e+e')P(e+e')(v+v')^*$ contain the common diffuse subalgebra
\[
fQf \oplus f'Qf' = (v+v')[ePe \oplus e'Pe'](v+v')^*.
\]
Since $(f+f')Q(f+f')$ and $(v+v')(e+e')P(e+e')(v+v')^*$ are both Pinsker in $(f+f')Q(f+f')$ and intersect diffusely, they must be equal.  This contradicts the maximality of $(e,f,v)$, and thus establishes that $wI_{(1-f)M(1-f)}((1-f)Q(1-f),u(1-e)P(1-e)u^*) = \{0\}$.
\end{proof}

Theorem \ref{thm: Pinsker dichotomy 2} implies the following corollary about projections in the Pinsker algebras $P$ and $Q$.  Note that in the case where $P$ and $Q$ are both factors, (2) below reduces to saying $e$ is either $0$ or $1$, which yields Corollary \ref{cor: Pinsker dichotomy factor case}.  Thus, the following corollary can be understood as a generalization of Corollary \ref{cor: Pinsker dichotomy factor case}.

\begin{cor}
Let $P$ and $Q$ be Pinsker subalgebras of a tracial von Neumann algebra $(M,\tau)$. Let $e$, $f$, and $v$ be as in the previous theorem.  Then
\begin{enumerate}
    \item Let $e_1, e_2 \in P$ with $e_1 \leq 1 - e$, $e_2 \leq e$ and $e_1 \sim_P e_2$.  Let $f_1$ and $f_2$ satisfy the analogous conditions for $Q$ and $f$.  Then $ve_2v^* \wedge f_2 = 0$.
    \item In particular if $Q$ is a factor, then $e$ is central in $P$.
\end{enumerate}
\end{cor}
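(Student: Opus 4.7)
For (1), the plan is a proof by contradiction exploiting part (2) of Theorem \ref{thm: Pinsker dichotomy 2}. Let $w \in P$ be a partial isometry with $w^*w = e_1$ and $ww^* = e_2$, and $z \in Q$ with $z^*z = f_1$ and $zz^* = f_2$. Assume for contradiction that $p_0 := v e_2 v^* \wedge f_2 \neq 0$; note $p_0$ is then a nonzero projection in $fQf$. The construction that drives the argument is
\[
\tilde v := z^* p_0 v w \in M, \qquad \tilde v' := \tilde v u^* \in (1-f)M(1-f),
\]
where $u$ is the unitary extension of $v$ furnished by Theorem \ref{thm: Pinsker dichotomy 2}. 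Using the absorption relations $p_0 \leq v e_2 v^*$ and $p_0 \leq f_2 = zz^*$, direct calculation yields
\[
\tilde v \tilde v^* = z^* p_0 z =: p_0'' \leq f_1, \qquad \tilde v^* \tilde v = w^* p_0' w \leq e_1,
\]
where $p_0' := v^* p_0 v$ is a projection in $ePe$, namely the preimage of $p_0$ under the $*$-isomorphism $\Phi := v(\cdot)v^* : ePe \to fQf$. In particular $\tilde v$ is a partial isometry, and $\tilde v'$ sits in $(1-f)M(1-f)$ with left support $p_0'' \leq 1-f$ and right support $u(w^*p_0'w)u^* \leq 1-f$.

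The next step is to verify that $\tilde v$ implements a $*$-isomorphism $\Theta^{-1} : p_0'' Q p_0'' \to (w^*p_0'w) P (w^*p_0'w)$ via $q \mapsto \tilde v^* q \tilde v$, with the intertwining identity $q \tilde v = \tilde v\, \Theta^{-1}(q)$ for all $q \in p_0'' Q p_0''$. This is because $\Theta$ and $\Theta^{-1}$ factor as compositions of the three natural corner isomorphisms $w(\cdot)w^*$, $\Phi$, and $z^*(\cdot)z$; the intertwining relation then reduces to $v b = \Phi(b) v$ on $ePe$ together with $\Phi(b) \leq p_0$. Conjugating by $u^*$ gives $q \tilde v' = \tilde v' \cdot u\Theta^{-1}(q) u^*$ with $u\Theta^{-1}(q)u^* \in (1-f) uPu^* (1-f)$. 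Hence the $p_0'' Q p_0''$-$(1-f)uPu^*(1-f)$ bimodule generated by $\tilde v'$ in $L^2((1-f)M(1-f))$ is contained in $\tilde v' \cdot (1-f)uPu^*(1-f)$, a cyclic right $(1-f)uPu^*(1-f)$-module. Thus $\tilde v' \in I_{(1-f)M(1-f)}(p_0'' Q p_0'', (1-f)uPu^*(1-f))$. Diffuseness of $p_0''Qp_0''$ (a minimal projection of $Q$ would, by Proposition \ref{prop: amplifications of Pinsker}(ii), give a one-dimensional Pinsker corner inside the diffuse algebra $p_0''Mp_0''$, impossible) then promotes this to $\tilde v' \in wI_{(1-f)M(1-f)}((1-f)Q(1-f), (1-f)uPu^*(1-f))$, contradicting Theorem \ref{thm: Pinsker dichotomy 2}(2).

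For (2), the deduction is short. The edge cases $e \in \{0,1\}$, equivalent to $f \in \{0,1\}$ via $\tau(e) = \tau(f)$, render $e$ trivially central. Otherwise, if $e$ were not central in $P$, standard polar-decomposition comparison in $P$ would produce nonzero projections $e_1 \leq 1-e$ and $e_2 \leq e$ in $P$ with $e_1 \sim_P e_2$. Setting $f_2 := v e_2 v^* \in fQf$, one has $\tau(f_2) = \tau(e_2) \leq \tau(1-e) = \tau(1-f)$, so since $Q$ is a factor there exists $f_1 \leq 1-f$ in $Q$ with $f_1 \sim_Q f_2$. Part (1) now forces $v e_2 v^* \wedge f_2 = 0$; but by construction $v e_2 v^* = f_2$, giving $f_2 \wedge f_2 = f_2 \neq 0$, the desired contradiction. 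Hence $e$ is central in $P$.

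\emph{Main obstacle.} The technical core of (1) is verifying that $\tilde v'$ actually lands in the \emph{weak} intertwining space. There are two substantive points: (i) that the highly composite $\tilde v = z^* p_0 v w$ is a genuine partial isometry, which relies essentially on the two absorption identities encoded in ``$p_0 \neq 0$''; and (ii) that the source corner $p_0'' Q p_0''$ is diffuse, promoting $I$ to $wI$. Both (i) and the factorization of $\Theta$ through the three corner isomorphisms are essentially bookkeeping; the only conceptual input is Theorem \ref{thm: Pinsker dichotomy 2}(2) itself.
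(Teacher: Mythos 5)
Your proof is correct and follows essentially the same route as the paper: both argue by contradiction, producing a nonzero element of $wI_{(1-f)M(1-f)}((1-f)Q(1-f),u(1-e)P(1-e)u^*)$ from the assumed nonzero projection $ve_2v^*\wedge f_2$ and thereby contradicting Theorem \ref{thm: Pinsker dichotomy 2}(2), with part (2) deduced from part (1) exactly as in the paper. The only difference is presentational: your composite partial isometry $z^*p_0vw$ is precisely the composition of the three corner conjugations (by $z$, $v$, and $w$) that the paper describes abstractly, so your version just makes the bookkeeping explicit.
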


\begin{proof}
(1) Suppose $e_1, e_2 \in P$ with $e_1 \leq 1 - e$, $e_2 \leq e$ and $e_1 \sim_P e_2$.  Let $f_1$ and $f_2$ satisfy the analogous conditions for $Q$ and $f$.  Suppose for contradiction that $ve_2v^* \wedge f_2 \neq 0$.  Let $f_2' = ve_2v^* \wedge f_2$.  Let $f_1'$ be the corresponding subprojection of $f_1$.  Let $e_2'
= v^*f_2'v$, and let $e_1'$ be the corresponding subprojection of $e_1$.  Then $e_1'Pe_1'$ is unitarily conjugate to $e_2'Pe_2'$ using the partial isometry that transforms $e_1$ to $e_2$, and similarly $f_1'Qf_1'$ is unitarily conjugate to $f_2'Qf_2'$.  And $e_2'Pe_2'$ is unitarily conjugate to $f_2'Qf_2'$ using $v$.  This implies that $wI_{(1-f)M(1-f)}((1-f)Q(1-f),v(1-e)P(1-e)v^*) \neq \{0\}$ (or alternatively, it directly contradicts the maximality in Step 2 of the previous proof).

(2) Suppose $Q$ is a factor, and assume for contradiction that $e$ is not central in $P$.  Then there must exist some projections $e_1, e_2 \in P$ with $e_1 \leq 1 - e$, $e_2 \leq e$ and $e_1 \sim_P e_2$.  Because $Q$ is a factor, there exist projections $f_1$ and $f_2$ satisfying the analogous conditions for $Q$ and $f$ and with $f_{2}=ve_{2}v^{*}$.  Hence, we get a contradiction from (1).
\end{proof}

\begin{example} \label{ex: Pinsker intertwining}
Dykema \cite[Theorem 4.6]{DykemaFreeProductsHyper} showed that if $A$ and $B$ are SOT-separable diffuse amenable tracial von Neumann algebras, then $A * B \cong L(\bF_{2})$.  Taking two such pairs, there is an isomorphism $\alpha: A_1 * B_1 \to A_2 * B_2 = M$; let $\alpha$ be any such isomorphism.  Note that $A_1$ and $A_2$ are Pinsker subalgebras by \cite{FreePinsker}, and hence Theorem \ref{thm: Pinsker dichotomy} applies to $A_1$ and $A_2$.

\begin{itemize}
    \item In particular, suppose that $A_1 = \cR$ and $A_2 = \cR$.  Then either $\alpha(A_1)$ and $A_2$ are unitarily conjugate, or $wI_M(A_2,\alpha(A_1)) = \{0\}$.
    \item Suppose that $A_1 = \cR \oplus \cR$ and $A_2 =\cR$.  Then the projection $e$ from Theorem \ref{thm: Pinsker dichotomy 2} must be central in $A_1$, and hence there are only four possible choices of $e$, resulting in a tetrachotomy.
    \item Generalizing Example \ref{ex: no intertwining 1}, suppose $A_1 = L^\infty[0,1]$ and $A_2 = \cR$.  Then for any nonzero projections $e \in \alpha(A_1)$ and $f \in A_2$, the von Neumann algebras $e\alpha(A_1)e$ and $fA_2f$ are not isomorphic.  Hence the projection $e$ in Theorem \ref{thm: Pinsker dichotomy 2} must be zero, and hence $wI_M(A_2,\alpha(A_1)) = \{0\}$.
\end{itemize}
\end{example}

\appendix

\section{Invariance of $1$-bounded entropy via noncommutative functional calculus}

\subsection{Microstate spaces and definition of $h$}

Here we recall definitions of the space of non-commutative laws.  Let $\bC\ip{t_i:i\in I}$ be the algebra of non-commutative complex polynomials in $(t_i)_{i \in I}$ (i.e. the free $\bC$-algebra on the set $I$). We give $\bC\ip{t_i:i\in I}$ the unique $*$-algebra structure which makes the $t_i$ self-adjoint.  If $\cM$ is a $\mathrm{W}^*$-algebra and $\mathbf{x} = (x_i)_{i \in I}\in \cM_{sa}^I$% is a family of self-adjoint elements
, then there is a unique $*$-homomorphism $\ev_\mathbf{x}: \bC\ip{t_i: i \in I} \to \cM$ such that $\ev_{\mathbf{x}}(t_i) = x_i$.  For a non-commutative polynomial $p \in \bC\ip{t_i: i \in I}$, we define $p(\mathbf{x}) = p((x_i)_{i \in I})$ to be $\ev_\mathbf{x}(p)$.

A tracial \emph{non-commutative law} of a self-adjoint $I$-tuple is a linear functional $\lambda: \bC\ip{t_i: i \in I} \to \bC$ that is
\begin{enumerate}
	\item unital, that is, $\lambda(1) = 1$;
	\item positive, that is, $\lambda(p^*p) \geq 0$;
	\item tracial, that is, $\lambda(pq) = \lambda(qp)$;
	\item exponentially bounded, that is, for some $(R_i)_{i \in I} \in (0,+\infty)^I$, we have
	\[
	|\lambda(t_{i(1)} \dots t_{i(\ell)})| \leq R_{i(1)} \dots R_{i(\ell)}
	\]
	for all $\ell$ and all $i(1)$, \dots, $i(\ell) \in I$.
\end{enumerate}
Given $\mathbf{R} = (R_i)_{i \in I} \in (0,+\infty)^I$, we define $\Sigma_{\mathbf{R}} = \Sigma_{(R_i)_{i \in I}}$ to be the set of non-commutative laws satisfying (4) for our given choice of $(R_i)_{i \in I}$.  We equip $\Sigma_{\mathbf{R}}$ with the topology of pointwise convergence on $\bC\ip{t_i: i \in I}$, which makes it a compact Hausdorff space.

For a tracial $\mathrm{W}^*$-algebra $(\cM, \tau)$, a tuple $\mathbf{x} = (x_i)_{i \in I} \in \cM_{sa}^I$ and $\mathbf{R} = (R_i)_{i \in I} \in (0,+\infty)^I$ satisfying $\norm{x_i} \leq R_i$, we define the \emph{non-commutative law of $\mathbf{x}$} as the map
\[
\lambda_{\mathbf{x}}: \bC\ip{t_i: i \in I} \to \bC: \quad p \mapsto \tau(p(x)).
\]
It is straightforward to verify that $\lambda_{\mathbf{x}}$ is in $\Sigma_{\mathbf{R}}$.  Conversely, given any $\lambda \in \Sigma_{\mathbf{R}}$, there exists some $(\cM, \tau)$ and $\mathbf{x} \in \cM_{sa}^I$ such that $\lambda_{\mathbf{x}} = \lambda$ and $\norm{x_i} \leq R_i$ for all $i \in I$; see either \cite[Proposition 4.2]{BNLaws} or the proof of \cite[Proposition 5.2.14(d)]{AGZ2009}.
We also remark that $(\cM, \tau)$ could be $M_n(\bC)$ with the normalized trace $\tau_n = (1/n) \Tr$.  Thus, if $\mathbf{x} \in M_n(\bC)_{sa}^I$, then $\lambda_{\mathbf{x}}$ is a well-defined non-commutative law.

The $1$-bounded entropy $h$ is defined in terms of Voiculescu's microstate spaces.

\begin{defn}[Microstate space]
Let $\cM$ be a tracial von Neumann algebra and $I$ an index set.  Let $\mathbf{R} \in (0,+\infty)^I$, let $\mathbf{y} \in \cM_{sa}^I$ be a self-adjoint tuple with $\norm{y_i} \leq R_i$, and let $\cO \subseteq \Sigma_{\mathbf{R}}$ be a neighborhood of $\lambda_{\mathbf{y}}$.  Then we define the microstate space
\[
\Gamma_{\mathbf{R}}^{(n)}(\cO) := \{\mathbf{Y} \in M_n(\bC)_{sa}^I: \norm{Y_i} \leq R_i \text{ for all } i \in I \text{ and } \lambda_{\mathbf{Y}} \in \cO \}.
\]
\end{defn}

\begin{defn}[Orbital covering numbers]
Let $I$ be an index set and let $\Omega \subset M_n(\bC)_{s.a.}^I$.  For $F \subseteq I$ finite and $\eps > 0$, we define the \emph{orbital $(F,\eps)$-neighborhood} of $\Omega$ as the set
\[
N_{F,\eps}^{\orb}(\Omega) = \{ \mathbf{Y} \in M_n(\bC)_{s.a.}^I: \textnormal{ there exists } \mathbf{Y}' \in \Omega, U \in \mathcal{U}(M_n(\bC)), \norm{Y_i - UY_i'U^*}_2 < \eps \text{ for } i \in F \}.
\]
Moreover, we define the orbital covering number $K_{F,\eps}^{\orb}(\Omega)$ as the minimal cardinality of a set $\Omega'$ such that $\Omega \subseteq N_{F,\eps}^{\orb}(\Omega')$.
\end{defn}

\begin{defn}
Let $(M,\tau)$ be a tracial von Neumann algebra, let $I$ and $J$ be index sets, let $\mathbf{R} \in (0,\infty)^I$ and $\mathbf{S} \in (0,\infty)^J$, and let $\mathbf{x} \in M_{s.a.}^I$ and $\mathbf{y} \in M_{s.a.}^J$ with $\norm{x_i} \leq R_i$ for $i \in I$ and $\norm{y_j} \leq S_j$ for $j \in J$.  For a neighborhood $\cO$ of $\lambda_{(\mathbf{x},\mathbf{y})}$ in $\Sigma_{(\mathbf{R},\mathbf{S})}$, consider the microstate space $\Gamma_{(\mathbf{R},\mathbf{S})}^{(n)}(\cO) \subseteq M_n(\bC)_{s.a.}^{I \sqcup J}$ and let $\pi_I(\Gamma_{(\mathbf{R},\mathbf{S})}^{(n)}(\cO))$ be its projection onto the $I$-indexed coordinates.  Then define
\[
h_{\mathbf{R},\mathbf{S}}(\mathbf{x}: \mathbf{y}) = \sup_{\substack{F \subseteq I \text{ finite} \\ \eps > 0}} \inf_{\cO \ni \lambda_{(\mathbf{x},\mathbf{y})}} \limsup_{n \to \infty} \frac{1}{n^2} \log K_{F,\eps}^{\orb}(\pi_I(\Gamma_{(\mathbf{R},\mathbf{S})}^{(n)}(\cO))).
\]
\end{defn}

In this appendix we will give an argument showing, at the same time, that this computation yields the same result for every $\mathbf{R}$ and $\mathbf{S}$ with $R_i \geq \norm{x_i}$ and $S_j \geq \norm{y_j}$, and that $h(\mathbf{x}:\mathbf{y})$ only depends on $\mathrm{W}^*(\mathbf{x})$ and $\mathbf{W}^*(\mathbf{x},\mathbf{y})$ (and the restriction of the trace to these algebras).

\subsection{$L^2$-continuous functional calculus}

Here we recall the construction of a certain space of non-commutative functions given as $L^2$-limits of trace polynomials, uniformly over all non-commutative laws.  Trace polynomials have been studied in many previous works such as \cite{Razmyslov1974,Procesi1976,Razmyslov1987,Rains1997,Sengupta2008,Cebron2013, DHK2013, Jing2015, Kemp2016, Kemp2017, DGS2016}.  The uniform $L^2$-completion of trace polynomials was first introduced in \cite{JekelConvexPot,JekelEAP,JekelThesis}, and its relationship with continuous model theory was addressed in \cite[\S 3.5]{Jekeltypes}.
 Moreover, \cite[Remark 3.5]{FreePinsker} described how this space is an example of the tracial completions of $\mathrm{C}^*$-algebras studied in \cite[p.\ 351-352]{Ozawa2013} and \cite{BBSTWW2019} and implicitly in \cite[\S 6]{CGSTW}.  Here we follow the version of the construction in \cite[\S 3]{FreePinsker}.

\begin{defn}
Fix an index set $I$ and $\mathbf{R} \in (0,+\infty)^I$.  Consider the space
\[
\mathcal{A}_{\mathbf{R}} = C(\Sigma_{\mathbf{R}}) \otimes \bC\ip{t_i: i \in I}.
\]
Given $(\cM,\tau)$ and $\mathbf{x} \in \cM_{sa}^I$ with $\norm{x_i} \leq R_i$, we define the evaluation map
    \begin{align*}
        \ev_{\mathbf{x}}: \mathcal{A}_{\mathbf{R}} &\to \cM\\
        \phi \otimes p &\mapsto \phi(\lambda_{\mathbf{x}}) p(\mathbf{x}).
    \end{align*}
Then we define a semi-norm on $C(\Sigma_{\mathbf{R}}) \otimes \bC\ip{t_i: i \in I}$ by
\[
\norm{f}_{\mathbf{R},2} = \sup_{(\cM,\tau), \mathbf{x}} \norm{\ev_{\mathbf{x}}(f)}_{L^2(\cM)},
\]
where the supremum is over all tracial $\mathrm{W}^*$-algebras $(\cM,\tau)$ and all $\mathbf{x}\in \cM_{sa}^I$ with $\norm{x_i} \leq R_i$. Denote by $\mathcal{F}_{\mathbf{R},2}$ the completion of $\mathcal{A}_{\mathbf{R}} / \{f \in \mathcal{A}_{\mathbf{R}}: \norm{f}_{\mathbf{R},2} = 0\}$.
\end{defn}

It is immediate that for every $(\cM,\tau)$, for every self-adjoint tuple $\mathbf{x} \in \cM_{sa}^I$ with $\norm{x_i} \leq R_i$, the evaluation map $\ev_{\mathbf{x}}: \mathcal{A}_{\mathbf{R}} \to \cM$ passes to a well-defined map $\mathcal{F}_{\mathbf{R},2} \to L^2(\cM)$, which we continue to denote by $\ev_{\mathbf{x}}$, and we will also define $f(\mathbf{x}) = \ev_{\mathbf{x}}(f)$.  Moreover, it is clear that $f(\mathbf{x}) = \ev_{\mathbf{x}}(f)$ always lies in $L^2(\mathrm{W}^*(\mathbf{x}))$ because this holds when $f$ is a simple tensor.

It will be convenient often to restrict our attention to elements of $\mathcal{F}_{\mathbf{R},2}$ that are bounded in operator norm.  For $f \in \mathcal{F}_{\mathbf{R},2}$, let us define
\[
\norm{f}_{\mathbf{R},\infty} = \sup_{(\cM,\tau), \mathbf{x}} \norm{\ev_{\mathbf{x}}(f)},
\]
and then set
\[
\mathcal{F}_{\mathbf{R},\infty} = \{f \in \mathcal{F}_{\mathbf{R},2}: \norm{f}_{\mathbf{R},\infty} < +\infty\}.
\]
\cite[Lemma 3.3]{FreePinsker} showed that $\mathcal{F}_{\mathbf{R},\infty}$ is a $\mathrm{C}^*$-algebra with respect the norm $\norm{\cdot}_{\mathbf{R},\infty}$ and the multiplication and $*$-operation arising from the natural ones on simple tensors.

One of the most useful properties of $\cF_{\mathbf{R},\infty}$ is that it allows all the elements of a von Neumann algebra to be expressed as a function of the generators.  More precisely, \cite[Proposition 3.4]{FreePinsker} showed the following:

\begin{prop}
Given $(\cM, \tau)$ and $\mathbf{x} \in \cM_{sa}^I$ with $\norm{x_i} \leq R_i$, the evaluation map $\ev_{\mathbf{x}}:\mathcal{F}_{\mathbf{R},2}
\to L^2(\mathrm{W}^*(\mathbf{x}))$ is surjective.  It also restricts to a surjective $*$-homomorphism $\mathcal{F}_{\mathbf{R},\infty} \to \mathrm{W}^*(\mathbf{x})$.
\end{prop}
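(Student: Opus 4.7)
The plan is to prove $L^2$-surjectivity first via an explicit telescoping construction with bump functions on $\Sigma_{\mathbf{R}}$, and then bootstrap to the $\mathcal{F}_{\mathbf{R},\infty}$-statement using continuous functional calculus together with a truncation. The initial observation is that $\ev_{\mathbf{x}}(1 \otimes p) = p(\mathbf{x})$ for polynomials $p$, so the image contains $\bC\langle \mathbf{x}\rangle$; since polynomials are $L^2$-dense in $L^2(\mathrm{W}^*(\mathbf{x}))$, the substantive obstacle is only that an $L^2(\cM)$-Cauchy sequence $p_n(\mathbf{x})$ need not give a Cauchy sequence in $\mathcal{F}_{\mathbf{R},2}$, because $\|1 \otimes p\|_{\mathbf{R},2} = \sup_{\nu \in \Sigma_{\mathbf{R}}} \nu(p^*p)^{1/2}$ takes a supremum over \emph{all} laws, not just $\lambda_{\mathbf{x}}$.

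For the $L^2$-statement, given $y \in L^2(\mathrm{W}^*(\mathbf{x}))$, I would pick polynomials $p_n$ with $\|(p_{n+1}-p_n)(\mathbf{x})\|_{L^2(\cM)} < 2^{-n}$. Since $\Sigma_{\mathbf{R}}$ carries the topology of pointwise convergence on $\bC\langle t_i : i \in I\rangle$, the map $\nu \mapsto \nu((p_{n+1}-p_n)^*(p_{n+1}-p_n))$ is continuous, so
\[
V_n := \{\nu \in \Sigma_{\mathbf{R}} : \nu((p_{n+1}-p_n)^*(p_{n+1}-p_n)) < 4^{-n}\}
\]
is an open neighborhood of $\lambda_{\mathbf{x}}$. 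By Urysohn's lemma on the compact Hausdorff space $\Sigma_{\mathbf{R}}$, choose $\phi_n \in C(\Sigma_{\mathbf{R}})$ with $0 \le \phi_n \le 1$, $\phi_n(\lambda_{\mathbf{x}}) = 1$, and $\supp \phi_n \subseteq V_n$. The telescoping series
\[
f := 1 \otimes p_1 + \sum_{n \ge 1} \phi_n \otimes (p_{n+1} - p_n)
\]
has summands of $\|\cdot\|_{\mathbf{R},2}$-norm at most $2^{-n}$, hence converges absolutely in $\mathcal{F}_{\mathbf{R},2}$, and $\ev_{\mathbf{x}}(f)$ telescopes to $y$.

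For the $\mathcal{F}_{\mathbf{R},\infty}$-surjectivity, by Cartesian decomposition I reduce to self-adjoint $y$ with $\|y\| \le K$. Kaplansky density produces self-adjoint polynomials $p_n$ with $\|p_n(\mathbf{x})\|_{\cM} \le K$ and $p_n(\mathbf{x}) \to y$ in $L^2$. Using the continuous functional calculus inside the $\mathrm{C}^*$-algebra $\mathcal{F}_{\mathbf{R},\infty}$ (already established in the excerpt) applied to $\chi_K(t) := \max(-K,\min(K,t))$, I set $\tilde{p}_n := \chi_K(1 \otimes p_n) \in \mathcal{F}_{\mathbf{R},\infty}$ with $\|\tilde{p}_n\|_{\mathbf{R},\infty} \le K$ and $\ev_{\mathbf{x}}(\tilde{p}_n) = \chi_K(p_n(\mathbf{x})) = p_n(\mathbf{x})$. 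The same telescoping, now applied to $\tilde{p}_n$ (using that $\nu \mapsto \nu(g^*g)$ remains continuous on $\Sigma_{\mathbf{R}}$ for every $g \in \mathcal{F}_{\mathbf{R},\infty}$, as a uniform limit of polynomial expressions via the Cauchy--Schwarz estimate $|\nu(g_n^*g_n) - \nu(g^*g)| \lesssim \|g_n - g\|_{\mathbf{R},2}$), produces $f$ with $\ev_{\mathbf{x}}(f) = y$. The hard part will be that this $f$ is only known a priori to lie in $\mathcal{F}_{\mathbf{R},2}$, because the telescoping series does not converge in $\|\cdot\|_{\mathbf{R},\infty}$; the remedy I envision is a final application of $\chi_K$ via continuous functional calculus in $\mathcal{F}_{\mathbf{R},\infty}$, exploiting $\ev_{\mathbf{x}}(\chi_K(f)) = \chi_K(\ev_{\mathbf{x}}(f)) = \chi_K(y) = y$, combined with a more careful bookkeeping that keeps the partial sums inside a fixed $\|\cdot\|_{\mathbf{R},\infty}$-ball (leveraging the uniform bound on the $\tilde{p}_n$).
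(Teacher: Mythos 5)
First, a caveat: the paper does not prove this proposition itself --- it quotes it from \cite[Lemma 2.4]{FreePinsker} --- so your argument can only be measured against the standard proof. Your treatment of $L^2$-surjectivity is correct and is essentially that standard argument: the bound $\norm{\phi_n\otimes(p_{n+1}-p_n)}_{\mathbf{R},2}\le 2^{-n}$ holds because the supremum defining $\norm{\cdot}_{\mathbf{R},2}$ only sees laws lying in $V_n$ (where the $L^2$-norm of the increment is $<2^{-n}$) or laws where $\phi_n$ vanishes, and the telescoping series then converges in $\mathcal{F}_{\mathbf{R},2}$ and evaluates to $y$ by $L^2$-continuity of $\ev_{\mathbf{x}}$. (You never address the assertion that $\ev_{\mathbf{x}}$ restricts to a $*$-homomorphism on $\mathcal{F}_{\mathbf{R},\infty}$; it is routine from multiplicativity on simple tensors, but you implicitly use it when you commute $\ev_{\mathbf{x}}$ with $\chi_K$, so it must be established before, not after, the surjectivity argument.)

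The genuine gap is the last step of the $\mathcal{F}_{\mathbf{R},\infty}$ part, and your proposed remedy is circular as written: $\chi_K(f)$ is defined by continuous functional calculus only if $f$ already lies in the $\mathrm{C}^*$-algebra $\mathcal{F}_{\mathbf{R},\infty}$, whereas your $f$ is a priori only an element of the Banach space $\mathcal{F}_{\mathbf{R},2}$, which carries no product and no functional calculus. The repair is to truncate the partial sums rather than the limit: set $S_N=\tilde p_1+\sum_{n\le N}(\phi_n\otimes 1)(\tilde p_{n+1}-\tilde p_n)\in\mathcal{F}_{\mathbf{R},\infty}$ and $f_N=\chi_K(S_N)$, so that $\norm{f_N}_{\mathbf{R},\infty}\le K$ and $\ev_{\mathbf{x}}(f_N)=\chi_K(p_{N+1}(\mathbf{x}))=p_{N+1}(\mathbf{x})$. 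What is then missing --- and what ``careful bookkeeping'' will not supply --- is the reason $(f_N)$ remains Cauchy in $\norm{\cdot}_{\mathbf{R},2}$: one needs the fact that a $1$-Lipschitz scalar function is an $L^2$-contraction on self-adjoint elements of any tracial von Neumann algebra, $\norm{\chi_K(A)-\chi_K(B)}_2\le\norm{A-B}_2$ (a Birman--Solomyak/Schur-multiplier estimate); applying it in each $(\cM,\tau)$ and taking suprema gives $\norm{f_{N+1}-f_N}_{\mathbf{R},2}\le\norm{S_{N+1}-S_N}_{\mathbf{R},2}\le 2^{-N}$. Granting that, $f=\lim_N f_N$ exists in $\mathcal{F}_{\mathbf{R},2}$, satisfies $\ev_{\mathbf{x}}(f)=y$, and lies in $\mathcal{F}_{\mathbf{R},\infty}$ with $\norm{f}_{\mathbf{R},\infty}\le K$ because for each test tuple $\mathbf{z}$ the operator-norm ball of radius $K$ is closed in $L^2$, so the bound $\norm{\ev_{\mathbf{z}}(f_N)}\le K$ passes to the limit. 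With that one additional ingredient your outline becomes a complete proof along the standard lines.
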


This surjectivity property on its own is not too significant, because for instance, the double dual of the $\mathrm{C}^*$-universal free product of $C[-R_i,R_i]$ over $i \in I$ can be used to define a functional calculus that is surjective.  The benefit of the construction used here is that it achieves surjectivity at the \emph{same time} as relatively strong continuity properties.

First, we show that the non-commutative law of the output will depend continuously on the non-commutative law of the input. As we will see later, this property allows these functions to transform between microstate spaces for different generators of a von Neumann algebra.  To fix notation, let $I$ and $I'$ be index sets.  Let $\mathbf{R} \in (0,+\infty)^I$ and $\mathbf{R}' \in (0,+\infty)^{I'}$.  We define
\[
\mathcal{F}_{\mathbf{R},\mathbf{R}'} = \{ \mathbf{f} = (f_i)_{i \in I'} \in (\mathcal{F}_{\mathbf{R},\infty})_{sa}^{I'}: \norm{f_i}_{\mathbf{R},\infty} \leq R_i' \text{ for all } i\in I'\}.
\]

\begin{prop}[{\cite[Proposition 3.7]{FreePinsker}}] \label{prop:pushforwardcontinuity}
Let $\mathbf{R} \in (0,+\infty)^I$ and $\mathbf{R}' \in (0,+\infty)^{I'}$.  Let $\mathbf{f} = (f_i)_{i \in I'} \in \mathcal{F}_{\mathbf{R},\mathbf{R}'}$.
\begin{enumerate}
	\item Given $(\cM,\tau)$ and $\mathbf{x} \in \cM_{sa}^I$ with $\norm{x_i} \leq R_i$, we set $\mathbf{f}(\mathbf{x}) = (f_i(\mathbf{x}))_{i \in I'}$.  Then $\lambda_{\mathbf{f}(\mathbf{x})}$ is uniquely determined by $\lambda_{\mathbf{x}}$.
	\item Let $\mathbf{f}_*$ be the ``push-forward'' mapping $\Sigma_{\mathbf{R}} \to \Sigma_{\mathbf{R}'}$ defined by $\mathbf{f}_* \lambda_{\mathbf{x}} = \lambda_{\mathbf{f}(\mathbf{x})}$ for all such tuples $\mathbf{x}$.  Then $\mathbf{f}_*$ is continuous.
\end{enumerate}
\end{prop}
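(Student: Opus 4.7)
My plan is to reduce both claims to a single continuity property: construct, for each $g \in \cF_{\mathbf{R},2}$, a continuous function $G_g \colon \Sigma_{\mathbf{R}} \to \bC$ satisfying $G_g(\lambda_{\mathbf{x}}) = \tau(g(\mathbf{x}))$ in every tracial realization $(\cM,\tau,\mathbf{x})$ of the law $\lambda_{\mathbf{x}}$. Assuming this, for any non-commutative monomial $p = t_{j_1}\cdots t_{j_\ell}$ in $\bC\ip{t_j : j \in I'}$, the element $p(\mathbf{f}) := f_{j_1}\cdots f_{j_\ell}$ lies in the $\mathrm{C}^*$-algebra $\cF_{\mathbf{R},\infty}$ (so in particular in $\cF_{\mathbf{R},2}$), and since $\ev_{\mathbf{x}}$ is a $*$-homomorphism on $\cF_{\mathbf{R},\infty}$,
\[
\lambda_{\mathbf{f}(\mathbf{x})}(p) = \tau(f_{j_1}(\mathbf{x})\cdots f_{j_\ell}(\mathbf{x})) = \tau(p(\mathbf{f})(\mathbf{x})) = G_{p(\mathbf{f})}(\lambda_{\mathbf{x}}).
\]
The right-hand side depends only on $\lambda_{\mathbf{x}}$, which yields part (1), and is continuous in $\lambda_{\mathbf{x}}$, which yields part (2) after recalling that $\Sigma_{\mathbf{R}'}$ carries the topology of pointwise convergence on polynomials.

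To build $G_g$, I would first define it on the dense subalgebra $\cA_{\mathbf{R}}$. On a simple tensor $\varphi \otimes q$, set $G_{\varphi \otimes q}(\lambda) := \varphi(\lambda)\lambda(q)$, which is continuous on $\Sigma_{\mathbf{R}}$, and extend by linearity; the defining identity $\ev_{\mathbf{x}}(\varphi \otimes q) = \varphi(\lambda_{\mathbf{x}})q(\mathbf{x})$ then gives $G_g(\lambda_{\mathbf{x}}) = \tau(g(\mathbf{x}))$ for $g \in \cA_{\mathbf{R}}$, independently of the realization $\mathbf{x}$. The key uniform estimate is
\[
\sup_{\lambda \in \Sigma_{\mathbf{R}}} |G_g(\lambda)| \leq \norm{g}_{\mathbf{R},2} \qquad (g \in \cA_{\mathbf{R}}),
\]
which follows from $|\tau(a)| \leq \norm{a}_{L^2}$ applied in any one realization, combined with the definition of $\norm{g}_{\mathbf{R},2}$ as a supremum over all realizations. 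This uniform bound lets $g \mapsto G_g$ extend by density to a norm-decreasing linear map $\cF_{\mathbf{R},2} \to C(\Sigma_{\mathbf{R}})$, and the identity $G_g(\lambda_{\mathbf{x}}) = \tau(g(\mathbf{x}))$ persists in the limit since both sides are $\norm{\cdot}_{\mathbf{R},2}$-continuous in $g$ (the left by construction, the right via $|\tau((g-g_n)(\mathbf{x}))| \leq \norm{(g-g_n)(\mathbf{x})}_2 \leq \norm{g-g_n}_{\mathbf{R},2}$).

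I do not foresee any serious obstacle; the argument is a systematic extension-by-density from simple tensors. The only point requiring care is that the bound on $G_g$ is truly uniform in $\lambda$, so that the density extension produces an honest continuous function on $\Sigma_{\mathbf{R}}$ rather than merely a pointwise limit of continuous functions. This uniformity is precisely what the definition of $\norm{\cdot}_{\mathbf{R},2}$ as a supremum over all tracial realizations of laws in $\Sigma_{\mathbf{R}}$ is designed to provide.
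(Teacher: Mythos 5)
Your proof is correct. Note that the paper does not actually prove this proposition; it is quoted from \cite[Proposition 2.7]{FreePinsker}, so there is no in-text argument to compare against. Your construction of the ``trace functional'' $G_g(\lambda) := \varphi(\lambda)\lambda(q)$ on simple tensors, extended to $\cF_{\mathbf{R},2}$ via the uniform bound $\sup_{\lambda}|G_g(\lambda)| \leq \norm{g}_{\mathbf{R},2}$ (which uses that every law in $\Sigma_{\mathbf{R}}$ is realized by some tuple), is exactly the natural route and matches the strategy of the cited source. The only points you leave implicit, both harmless, are: (i) that $\mathbf{f}_*\lambda_{\mathbf{x}}$ actually lands in $\Sigma_{\mathbf{R}'}$, which follows from $\norm{f_i}_{\mathbf{R},\infty} \leq R_i'$ in the definition of $\cF_{\mathbf{R},\mathbf{R}'}$; and (ii) that multiplicativity of $\ev_{\mathbf{x}}$ on $\cF_{\mathbf{R},\infty}$ is part of the C$^*$-algebra structure established in \cite[Lemma 2.3]{FreePinsker}, which you correctly invoke.
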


Another consequence of this is these spaces of functions are closed under composition.

\begin{prop} \label{prop: composition}
Fix index sets $I$, $I'$, and $I''$ and corresponding tuples $\mathbf{R}$, $\mathbf{R}'$, and $\mathbf{R}''$ from $(0,\infty)$.  Let $\mathbf{f} \in \mathcal{F}_{\mathbf{R},\mathbf{R}'}$ and $\mathbf{g} \in \mathcal{F}_{\mathbf{R}',\mathbf{R}''}$.  Then there exists a unique $\mathbf{h} \in \mathcal{F}_{\mathbf{R},\mathbf{R}'}$ such that $\mathbf{h}(\mathbf{x}) = \mathbf{g}(\mathbf{f}(\mathbf{x}))$ for all $M$ and $\mathbf{x} \in M^I$ with $\norm{x_i} \leq R_i$.
\end{prop}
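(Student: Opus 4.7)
The plan is to build $\mathbf{h}$ componentwise, so it suffices to produce, for each $g\in \mathcal{F}_{\mathbf{R}',\infty}$ with $\|g\|_{\mathbf{R}',\infty}\leq C$ and $g=g^{*}$, an element $T_{\mathbf{f}}(g)\in \mathcal{F}_{\mathbf{R},\infty}$ of norm $\leq C$ satisfying $T_{\mathbf{f}}(g)(\mathbf{x})=g(\mathbf{f}(\mathbf{x}))$ for every tuple $\mathbf{x}$. The uniqueness of $T_{\mathbf{f}}(g)$ will then be automatic: two candidates would agree after evaluation at every $(\cM,\tau,\mathbf{x})$ with $\|x_i\|\leq R_i$, so their difference would have zero $\|\cdot\|_{\mathbf{R},2}$-seminorm and vanish in $\mathcal{F}_{\mathbf{R},2}$.

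First I would define $T_{\mathbf{f}}$ on the dense $*$-subalgebra $\mathcal{A}_{\mathbf{R}'}=C(\Sigma_{\mathbf{R}'})\otimes \bC\langle t_{j}: j\in I'\rangle$. For a simple tensor $\phi\otimes p$, set
\[
T_{\mathbf{f}}(\phi\otimes p)=(\phi\circ \mathbf{f}_{*})\cdot p(\mathbf{f}),
\]
where $\phi\circ \mathbf{f}_{*}\in C(\Sigma_{\mathbf{R}})$ by Proposition \ref{prop:pushforwardcontinuity}, $p(\mathbf{f})$ is formed using the $\mathrm{C}^{*}$-algebra structure of $\mathcal{F}_{\mathbf{R},\infty}$ from \cite[Lemma 2.3]{FreePinsker}, and the product is the obvious one (thinking of $C(\Sigma_{\mathbf{R}})$ as central scalars in $\mathcal{F}_{\mathbf{R},\infty}$). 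Linear extension gives a $*$-homomorphism $T_{\mathbf{f}}\colon \mathcal{A}_{\mathbf{R}'}\to \mathcal{F}_{\mathbf{R},\infty}$, and by construction
\[
T_{\mathbf{f}}(g)(\mathbf{x})=g(\mathbf{f}(\mathbf{x})) \quad \text{for every $(\cM,\tau)$ and every $\mathbf{x}\in \cM_{sa}^{I}$ with $\|x_{i}\|\leq R_{i}$.}
\]
This evaluation identity is the crucial content, and it is the step to verify carefully; everything else is essentially functorial.

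Second, I would use the evaluation identity to obtain the norm estimates. For any $g\in \mathcal{A}_{\mathbf{R}'}$,
\[
\|T_{\mathbf{f}}(g)\|_{\mathbf{R},2}=\sup_{(\cM,\tau),\mathbf{x}}\|g(\mathbf{f}(\mathbf{x}))\|_{L^{2}(\cM)}\leq \sup_{(\cN,\sigma),\mathbf{y}}\|g(\mathbf{y})\|_{L^{2}(\cN)}=\|g\|_{\mathbf{R}',2},
\]
where the inequality uses that each $\mathbf{f}(\mathbf{x})$ is an $I'$-tuple with $\|f_{j}(\mathbf{x})\|\leq R'_{j}$. The identical argument with the operator norm in place of $L^{2}$ gives $\|T_{\mathbf{f}}(g)\|_{\mathbf{R},\infty}\leq \|g\|_{\mathbf{R}',\infty}$. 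Hence $T_{\mathbf{f}}$ extends continuously to $\mathcal{F}_{\mathbf{R}',2}$ and restricts to a contractive $*$-homomorphism $\mathcal{F}_{\mathbf{R}',\infty}\to \mathcal{F}_{\mathbf{R},\infty}$. The evaluation identity persists under this extension by density: if $g_{n}\to g$ in $\|\cdot\|_{\mathbf{R}',2}$ then $T_{\mathbf{f}}(g_{n})(\mathbf{x})\to T_{\mathbf{f}}(g)(\mathbf{x})$ and $g_{n}(\mathbf{f}(\mathbf{x}))\to g(\mathbf{f}(\mathbf{x}))$ in $L^{2}(\cM)$, so the limits agree.

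Finally, set $h_{j}=T_{\mathbf{f}}(g_{j})$ for each $j\in I''$. Self-adjointness and the bound $\|h_{j}\|_{\mathbf{R},\infty}\leq \|g_{j}\|_{\mathbf{R}',\infty}\leq R''_{j}$ show $\mathbf{h}\in \mathcal{F}_{\mathbf{R},\mathbf{R}''}$, and the evaluation identity is exactly the required composition relation $\mathbf{h}(\mathbf{x})=\mathbf{g}(\mathbf{f}(\mathbf{x}))$. The main obstacle, and really the only non-formal point, is checking the evaluation identity on simple tensors in step one, where one needs the compatibility $\phi(\lambda_{\mathbf{f}(\mathbf{x})})=(\phi\circ \mathbf{f}_{*})(\lambda_{\mathbf{x}})$; this is immediate from the definition of $\mathbf{f}_{*}$ in Proposition \ref{prop:pushforwardcontinuity}.
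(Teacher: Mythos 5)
Your proposal is correct and follows essentially the same route as the paper: define the composition on simple tensors $\phi\otimes p$ via the push-forward continuity of Proposition \ref{prop:pushforwardcontinuity} together with the $\mathrm{C}^*$-algebra structure of $\mathcal{F}_{\mathbf{R},\infty}$, derive the contractivity $\|g\circ\mathbf{f}\|_{\mathbf{R},2}\leq\|g\|_{\mathbf{R}',2}$ and $\|g\circ\mathbf{f}\|_{\mathbf{R},\infty}\leq\|g\|_{\mathbf{R}',\infty}$ by evaluating on all admissible tuples, and extend to the completion by density. Your explicit treatment of uniqueness and of the persistence of the evaluation identity under $L^2$-limits is a welcome addition that the paper leaves implicit.
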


\begin{proof}
First, we consider $g \in \mathcal{F}_{\mathbf{R}',2}$ and show that $g \circ \mathbf{f}$ is a well-defined element of $\mathcal{F}_{\mathbf{R}',2}$.  If $g$ is a simple tensor $\phi \otimes p$, then $\phi(\lambda_{\mathbf{f}(\mathbf{x})}) = \phi \circ \mathbf{f}_* \lambda_{\mathbf{x}}$ defines a continuous function on the space of laws by the previous proposition.  Also, since $\mathcal{F}_{\mathcal{R},\infty}$ is a $\mathrm{C}^*$-algebra, $p \circ \mathbf{f} \in \mathcal{F}_{\mathbf{R},\infty}$, and hence so is $g \circ \mathbf{f} = (\phi \circ \mathbf{f}_* \otimes 1)(p \circ \mathbf{f})$.

Next, if $g$ is a linear combination of simple tensors, one can check directly that $\norm{g \circ \mathbf{f}}_{\mathbf{R},2} \leq \norm{g}_{\mathbf{R'}
,2}$ by considering evaluations on all possible tuples.  This estimate allows us to pass to the completion, showing that if $g \in \mathcal{F}_{\mathbf{R}',2}$, then $g \circ \mathbf{f} \in \mathcal{F}_{\mathbf{R},2}$.

Again, by evaluating on points, we see that $\norm{g \circ \mathbf{f}}_{\mathbf{R},\infty} \leq \norm{g}_{\mathbf{R}',\infty}$.  Replacing the single function $g$ by an $I''$-tuple yields the asserted result.
\end{proof}

The second continuity property that we need for $\mathcal{F}_{\mathbf{R},\mathbf{R}'}$ is uniform continuity with respect to $L^2$ norm.  This will allow us to use the functions in $\mathcal{F}_{\mathbf{R},\mathbf{R}'}$ to ``push forward'' $\eps$-coverings from one microstate space to another.

\begin{prop}[{\cite[Proposition 3.9]{FreePinsker}}] \label{prop:L2uniformcontinuity}
Let $I$ be an index set and $\mathbf{R} \in (0,+\infty)^I$, and let $f \in \mathcal{F}_{\mathbf{R},2}$.  Then for every $\eps > 0$ there exists a finite $F \subseteq I$ and a $\delta > 0$ such that for every $(\cM,\tau)$ and $\mathbf{x}$, $\mathbf{y} \in \cM_{sa}^I$ with $\norm{x_i}, \norm{y_i} \leq R_i$, if $\norm{x_i - y_i}_2 < \delta$ for all $i \in F$, then $\norm{f(\mathbf{x}) - f(\mathbf{y})}_2 < \eps$.
\end{prop}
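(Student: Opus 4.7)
The plan is to first establish the proposition for simple tensors $f = \phi \otimes p$ with $\phi \in C(\Sigma_{\mathbf{R}})$ and $p \in \bC\ip{t_i: i \in I}$, extend by linearity to all of $\mathcal{A}_{\mathbf{R}}$, and then pass to $\mathcal{F}_{\mathbf{R},2}$ by density.

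For a simple tensor, write
\[
f(\mathbf{x}) - f(\mathbf{y}) = \phi(\lambda_{\mathbf{x}})\bigl(p(\mathbf{x}) - p(\mathbf{y})\bigr) + \bigl(\phi(\lambda_{\mathbf{x}}) - \phi(\lambda_{\mathbf{y}})\bigr) p(\mathbf{y}),
\]
and estimate the two terms separately. For the first, a routine induction on the degree of a noncommutative monomial, using a telescoping identity $ab - a'b' = (a-a')b + a'(b-b')$ together with the operator-norm bounds $\norm{x_i},\norm{y_i}\leq R_i$, gives a constant $C_p > 0$ (depending on $p$ and $\mathbf{R}$) and a finite set $F_p \subseteq I$ (the variables appearing in $p$) such that $\norm{p(\mathbf{x}) - p(\mathbf{y})}_2 \leq C_p \sum_{i \in F_p} \norm{x_i - y_i}_2$ uniformly over all $(\cM,\tau)$. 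Multiplying by the scalar $|\phi(\lambda_{\mathbf{x}})| \leq \norm{\phi}_\infty$ controls the first term.

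For the second term, $\norm{p(\mathbf{y})}_2 \leq \norm{p(\mathbf{y})}_\infty$ is bounded in terms of $p$ and $\mathbf{R}$, so it suffices to show $|\phi(\lambda_{\mathbf{x}}) - \phi(\lambda_{\mathbf{y}})|$ is small. Since $\Sigma_{\mathbf{R}}$ is compact Hausdorff, $\phi$ is uniformly continuous; since the topology on $\Sigma_{\mathbf{R}}$ is generated by evaluations at monomials, uniform continuity provides a finite set of monomials $q_1,\dots,q_k$ and $\eta > 0$ such that $|\lambda(q_j) - \mu(q_j)| < \eta$ for all $j$ implies $|\phi(\lambda) - \phi(\mu)|$ is small. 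Applied to $\lambda = \lambda_{\mathbf{x}}$ and $\mu = \lambda_{\mathbf{y}}$, Cauchy--Schwarz gives $|\tau(q_j(\mathbf{x})) - \tau(q_j(\mathbf{y}))| \leq \norm{q_j(\mathbf{x}) - q_j(\mathbf{y})}_2$, which by the previous polynomial estimate is controlled by $\sum_{i \in F'} \norm{x_i - y_i}_2$ for a suitable finite $F' \subseteq I$. Taking $F = F_p \cup F'$ and choosing $\delta$ small enough handles the simple-tensor case, and linearity handles finite sums.

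For the general case, given $f \in \mathcal{F}_{\mathbf{R},2}$ and $\eps > 0$, choose $g \in \mathcal{A}_{\mathbf{R}}$ with $\norm{f - g}_{\mathbf{R},2} < \eps/3$. By the previous step, there exist $F \subseteq I$ finite and $\delta > 0$ such that $\norm{x_i - y_i}_2 < \delta$ for $i \in F$ implies $\norm{g(\mathbf{x}) - g(\mathbf{y})}_2 < \eps/3$ uniformly in $(\cM,\tau)$. The very definition of $\norm{\cdot}_{\mathbf{R},2}$ gives $\norm{(f-g)(\mathbf{z})}_{L^2(\cM)} \leq \norm{f-g}_{\mathbf{R},2} < \eps/3$ for any admissible $\mathbf{z}$, and applying this at $\mathbf{z}=\mathbf{x}$ and $\mathbf{z}=\mathbf{y}$ combined with the triangle inequality yields $\norm{f(\mathbf{x}) - f(\mathbf{y})}_2 < \eps$.

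The main technical obstacle is the uniform polynomial Lipschitz estimate in $L^2$, but this is elementary once one decomposes via the telescoping identity and invokes the operator-norm bound $R_i$; the conceptual point is that the uniform continuity obtained for the dense subalgebra $\mathcal{A}_{\mathbf{R}}$ automatically transfers to the completion because the $\norm{\cdot}_{\mathbf{R},2}$-norm dominates $\norm{\ev_{\mathbf{z}}(\cdot)}_{L^2(\cM)}$ simultaneously for all $(\cM,\tau,\mathbf{z})$.
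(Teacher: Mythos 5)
Your proof is correct: the telescoping estimate for noncommutative polynomials in operator-norm balls, the uniform continuity of $\phi$ on the compact space $\Sigma_{\mathbf{R}}$ via finitely many monomial evaluations, and the passage to the completion using that $\norm{\cdot}_{\mathbf{R},2}$ dominates $\norm{\ev_{\mathbf{z}}(\cdot)}_{L^2(\cM)}$ uniformly in $(\cM,\tau,\mathbf{z})$ together give exactly the claimed statement. The paper itself only cites \cite[Proposition 2.9]{FreePinsker} for this result, and your dense-subalgebra-plus-density argument is the same standard route taken there, so nothing further is needed.
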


\subsection{Proof of monotonicity and invariance}

\begin{thm}
Let $(M,\tau)$ be a tracial von Neumann algebra.  Let $I$ and $J$ be index sets, let $\mathbf{R} \in (0,\infty)^I$ and $\mathbf{S} \in (0,\infty)^J$, and let $\mathbf{x} \in M_{s.a.}^I$ and $\mathbf{y} \in M_{s.a.}^J$ with $\norm{x_i} \leq R_i$ for $i \in I$ and $\norm{y_j} \leq S_j$ for $j \in J$.  Let $I'$, $J'$, $\mathbf{R}'$, $\mathbf{S}'$, $\mathbf{x}'\in M^{I'}_{s.a.}$, $\mathbf{y}'\in M^{J'}_{s.a.}$ satisfy similar conditions.  Suppose that $\mathrm{W}^*(\mathbf{x},\mathbf{y}) \supseteq \mathrm{W}^*(\mathbf{x}',\mathbf{y}')$ and $\mathrm{W}^*(\mathbf{x}) \subseteq \mathrm{W}^*(\mathbf{x}')$.  Then
\[
h_{\mathbf{R},\mathbf{S}}(\mathbf{x}:\mathbf{y}) \leq h_{\mathbf{R}',\mathbf{S}'}(\mathbf{x}':\mathbf{y}').
\]
\end{thm}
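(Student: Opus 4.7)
The plan is to realize the inclusions $\mathrm{W}^*(\mathbf{x})\subseteq \mathrm{W}^*(\mathbf{x}')$ and $\mathrm{W}^*(\mathbf{x}',\mathbf{y}')\subseteq \mathrm{W}^*(\mathbf{x},\mathbf{y})$ by elements of the $L^2$-continuous functional calculus built above, and to transport orbital covers of microstate spaces using both its pushforward continuity (Proposition~\ref{prop:pushforwardcontinuity}) and its $L^2$-uniform continuity (Proposition~\ref{prop:L2uniformcontinuity}). First, I would use surjectivity of the evaluation maps $\mathcal{F}_{\mathbf{R}',\infty}\to \mathrm{W}^*(\mathbf{x}')$ and $\mathcal{F}_{(\mathbf{R},\mathbf{S}),\infty}\to \mathrm{W}^*(\mathbf{x},\mathbf{y})$ to pick self-adjoint $\mathbf{f}=(f_i)_{i\in I}\in \mathcal{F}_{\mathbf{R}',\mathbf{R}}$ with $\mathbf{f}(\mathbf{x}')=\mathbf{x}$ and $\mathbf{g}\in \mathcal{F}_{(\mathbf{R},\mathbf{S}),(\mathbf{R}',\mathbf{S}')}$ with $\mathbf{g}(\mathbf{x},\mathbf{y})=(\mathbf{x}',\mathbf{y}')$; the pointwise operator-norm bounds on the $f_i$ can be arranged by truncating them via continuous functional calculus in the $\mathrm{C}^*$-algebra $\mathcal{F}_{\mathbf{R}',\infty}$. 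Writing $\mathbf{g}_{I'}$ for the $I'$-coordinates of $\mathbf{g}$ and applying Proposition~\ref{prop: composition}, the tuple $\mathbf{h}:=\mathbf{f}\circ \mathbf{g}_{I'}-\pi_I$ lies in $\mathcal{F}_{(\mathbf{R},\mathbf{S}),2\mathbf{R}}$ and satisfies $\mathbf{h}(\mathbf{x},\mathbf{y})=0$.

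Now fix a finite $F\subseteq I$ and $\eps>0$. Applying Proposition~\ref{prop:L2uniformcontinuity} to each $f_i$ with $i\in F$ yields a finite $F'\subseteq I'$ and $\delta>0$ such that for any $\mathbf{R}'$-bounded tuples $\mathbf{X}',\mathbf{Y}'$, having $\|X'_j-Y'_j\|_2<\delta$ for $j\in F'$ forces $\|f_i(\mathbf{X}')-f_i(\mathbf{Y}')\|_2<\eps/2$ for all $i\in F$. Given an arbitrary neighborhood $\cO'$ of $\lambda_{(\mathbf{x}',\mathbf{y}')}$, continuity of the pushforward $\mathbf{g}_*$ provides a neighborhood $\cO_0$ of $\lambda_{(\mathbf{x},\mathbf{y})}$ with $\mathbf{g}_*(\cO_0)\subseteq \cO'$. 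Since $\mathbf{h}_*(\lambda_{(\mathbf{x},\mathbf{y})})$ is the point mass at $0$, applying pushforward continuity to $\mathbf{h}$ lets us further shrink $\cO_0$ to a neighborhood $\cO$ of $\lambda_{(\mathbf{x},\mathbf{y})}$ so that $\|h_i(\mathbf{X},\mathbf{Y})\|_2<\eps/2$ for every $i\in F$, every $n$, and every $(\mathbf{X},\mathbf{Y})\in \Gamma^{(n)}_{(\mathbf{R},\mathbf{S})}(\cO)$.

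I claim that for any orbital $(F',\delta)$-cover $\{\mathbf{X}'^{(\alpha)}\}_\alpha$ of $\pi_{I'}(\Gamma^{(n)}_{(\mathbf{R}',\mathbf{S}')}(\cO'))$, the family $\{\mathbf{f}(\mathbf{X}'^{(\alpha)})\}_\alpha$ is an orbital $(F,\eps)$-cover of $\pi_I(\Gamma^{(n)}_{(\mathbf{R},\mathbf{S})}(\cO))$. Indeed, given $(\mathbf{X},\mathbf{Y})\in \Gamma^{(n)}_{(\mathbf{R},\mathbf{S})}(\cO)$, set $\widetilde{\mathbf{X}}':=\mathbf{g}_{I'}(\mathbf{X},\mathbf{Y})$, which lies in $\pi_{I'}(\Gamma^{(n)}_{(\mathbf{R}',\mathbf{S}')}(\cO'))$ by the choice of $\cO$; choose $\alpha$ and a unitary $U\in M_n(\bC)$ with $\|\widetilde{X}'_j-UX'^{(\alpha)}_jU^*\|_2<\delta$ for $j\in F'$. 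Since the functional calculus commutes with unitary conjugation (immediate for simple tensors and hence for all of $\mathcal{F}_{\mathbf{R}',\infty}$ by density), the choice of $\delta$ gives $\|f_i(\widetilde{\mathbf{X}}')-Uf_i(\mathbf{X}'^{(\alpha)})U^*\|_2<\eps/2$ for $i\in F$, while by construction $\|X_i-f_i(\widetilde{\mathbf{X}}')\|_2=\|h_i(\mathbf{X},\mathbf{Y})\|_2<\eps/2$. The triangle inequality closes the argument, so
\[
K^{\orb}_{F,\eps}\bigl(\pi_I(\Gamma^{(n)}_{(\mathbf{R},\mathbf{S})}(\cO))\bigr)\leq K^{\orb}_{F',\delta}\bigl(\pi_{I'}(\Gamma^{(n)}_{(\mathbf{R}',\mathbf{S}')}(\cO'))\bigr).
\]
Taking $\limsup_n \tfrac{1}{n^2}\log$, then $\inf$ over $\cO'$, and finally $\sup$ over $(F',\delta)$ on the right and $(F,\eps)$ on the left, yields $h_{\mathbf{R},\mathbf{S}}(\mathbf{x}:\mathbf{y})\leq h_{\mathbf{R}',\mathbf{S}'}(\mathbf{x}':\mathbf{y}')$.

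The main obstacle is ensuring that the moduli $\delta$, the neighborhood $\cO$, and the $L^2$-bound on $h_i$ are \emph{uniform in the dimension} $n$ of the microstates. This is exactly what the completion $\mathcal{F}_{\mathbf{R},\infty}$ is engineered to provide: its defining seminorms are suprema over \emph{all} tracial $\mathrm{W}^*$-algebras, including matrix algebras of every size, so Propositions~\ref{prop:pushforwardcontinuity} and \ref{prop:L2uniformcontinuity} are automatically $n$-uniform. This replaces the polynomial-approximation bookkeeping of \cite[Theorem~A.9]{Hayes8} and is the conceptual simplification the appendix advertises.
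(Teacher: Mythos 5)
Your argument is correct and follows essentially the same route as the paper's proof: realize the two inclusions by tuples $\mathbf{f}$ and $\mathbf{g}$ of noncommutative functions, shrink the neighborhood of $\lambda_{(\mathbf{x},\mathbf{y})}$ so that $\mathbf{g}_*$ lands in $\cO'$ and $\|f_i\circ\pi_{I'}\circ\mathbf{g}(\mathbf{X},\mathbf{Y})-X_i\|_2<\eps/2$ on microstates, and push forward orbital covers via $\mathbf{f}$ using its dimension-uniform $L^2$-continuity. The only detail worth adding is the paper's replacement of an arbitrary $(F',\delta)$-cover by one drawn from inside $\pi_{I'}(\Gamma^{(n)}_{(\mathbf{R}',\mathbf{S}')}(\cO'))$ at the cost of doubling $\delta$, which guarantees the operator-norm bounds $\|X'^{(\alpha)}_j\|\leq R'_j$ needed to legitimately apply the uniform continuity estimate to the cover elements.
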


\begin{proof}
Unwinding the suprema and infima in the definition of $h$, it suffices to show that for every $F \subseteq I$ finite and $\eps > 0$, there exists $F' \subseteq I'$ finite and $\eps' > 0$, such that for every neighborhood $\cO'$ of $\lambda_{(\mathbf{x}',\mathbf{y}')}$ in $\Sigma_{(\mathbf{R}',\mathbf{S}')}$, there exists a neighborhood $\cO$ of $\lambda_{(\mathbf{x},\mathbf{y})}$ in $\Sigma_{(\mathbf{R},\mathbf{S})}$ such that
\[
K_{F,\eps}^{\orb}(\pi_I(\Gamma_{(\mathbf{R},\mathbf{S})}^{(n)}(\cO))) \leq K_{F',\eps'}^{\orb}(\pi_{I'}(\Gamma_{(\mathbf{R}',\mathbf{S}')}^{(n)}(\cO'))).
\]

Fix $F$ and $\eps$.  Because $\mathrm{W}^*(\mathbf{x}) \subseteq \mathrm{W}^*(\mathbf{x}')$, there exists $\mathbf{f} \in \mathcal{F}_{\mathbf{R}',\mathbf{R}}$ such that $\mathbf{x} = \mathbf{f}(\mathbf{x}')$.  By Proposition \ref{prop:L2uniformcontinuity}, there exists $\eps' > 0$ and $F' > 0$ such that for all tracial von Neumann algebras $N$ and all $\mathbf{z}$, $\mathbf{w} \in N^{I'}$ with $\norm{z_i}, \norm{w_i} \leq R_i'$, we have $\max_{i \in F'} \norm{z_i - w_i}_2 < 2\eps'$ implies $\max_{i \in F} \norm{f_i(\mathbf{z}) - f_i(\mathbf{w})}_2 < \eps/2$.

Now fix a neighborhood $\cO'$ of $\lambda_{\mathbf{x}',\mathbf{y}'}$ in $\Sigma_{(\mathbf{R}',\mathbf{S}')}$.  We claim that
\[
K_{F,\eps/2}^{\orb}(\mathbf{f}(\pi_{I'}(\Gamma_{\mathbf{R}',\mathbf{S}'}^{(n)}(\cO')))) \leq K_{F',\eps'}^{\orb}(\pi_{I'}(\Gamma_{\mathbf{R}',\mathbf{S}'}^{(n)}(\cO')))
\]
Indeed, let $\Omega$ be a set of cardinality $K_{F',\eps'}^{\orb}(\pi_I(\Gamma_{\mathbf{R},\mathbf{S}}^{(n)}(\cO')))$ such that $\pi_I(\Gamma_{\mathbf{R},\mathbf{S}}^{(n)}(\cO')) \subseteq N_{F',\eps'}^{\orb}(\Omega)$.  Let $\Omega' \subseteq \pi_I(\Gamma_{\mathbf{R},\mathbf{S}}^{(n)}(\cO'))$ be chosen to have one element within $\eps'$ of each element of $\Omega$, so that $\pi_I(\Gamma_{\mathbf{R},\mathbf{S}}^{(n)}(\cO')) \subseteq N_{F',2\eps'}^{\orb}(\Omega')$.  Then each $\mathbf{X}' \in \Omega'$, and more generally in $\pi_I(\Gamma_{\mathbf{R},\mathbf{S}}^{(n)}(\cO'))$ satisfies $\norm{X_i'} \leq R_i$, so that it is valid to apply the uniform continuity estimate for $\mathbf{f}$ to such points $\mathbf{X}'$.  The choice of $(F,\eps)$ thus implies that
\[
\mathbf{f}(\pi_{I'}(\Gamma_{\mathbf{R}',\mathbf{S}'}^{(n)}(\cO'))) \subseteq N_{F',\eps'/2}^{\orb}(\mathbf{f}(\Omega')),
\]
which proves our claim about the covering numbers.

Next, we describe how to choose $\cO$.  Since $\mathrm{W}^*(\mathbf{x}',\mathbf{y}') \subseteq \mathrm{W}^*(\mathbf{x},\mathbf{y})$, there exists $\mathbf{g} \in \mathcal{F}_{(\mathbf{R},\mathbf{S}),(\mathbf{R}',\mathbf{S}'))}$ such that $(\mathbf{x}',\mathbf{y}') = \mathbf{g}(\mathbf{x},\mathbf{y})$.  By continuity of $\mathbf{g}_*: \Sigma_{(\mathbf{R},\mathbf{S})} \to \Sigma_{(\mathbf{R}',\mathbf{S}')}$, the set
\[
\cO_1 = (\mathbf{g}_*)^{-1}(\cO')
\]
is open.   Let
\[
\cO_2 = \{\lambda_{(\mathbf{z},\mathbf{w})} \in \Sigma_{(\mathbf{R},\mathbf{S})}: \max_{i \in F} \norm{f_i \circ \pi_{I'} \circ \mathbf{g}(\mathbf{z},\mathbf{w}) - z_i}_2 < \eps/2 \}.
\]
The set $\cO_2$ is open using Propositions \ref{prop: composition} and \ref{prop:pushforwardcontinuity}.  It also contains $\lambda_{(\mathbf{x},\mathbf{y})}$ because $\mathbf{f}(\pi_{I'}(\mathbf{g}(\mathbf{x},\mathbf{y}))) = \mathbf{f}(\pi_{I'}(\mathbf{x}',\mathbf{y}')) = \mathbf{f}(\mathbf{x}') = \mathbf{x}$.

Let $\cO = \cO_1 \cap \cO_2$.  We claim that
\[
\pi_I \Gamma_{\mathbf{R},\mathbf{S}}^{(n)}(\cO) \subseteq N_{F,\eps/2}^{\orb}(\mathbf{f}(\pi_{I'}(\Gamma_{\mathbf{R}',\mathbf{S}'}^{(n)}(\cO')))).
\]
Indeed, if $\mathbf{X}$ is in the set on the left-hand side, then there exists $\mathbf{Y}$ such that $\lambda_{(\mathbf{X},\mathbf{Y})} \in \cO$.  In particular, this means that $\lambda_{\mathbf{g}(\mathbf{X},\mathbf{Y})} \in \cO'$, or in other words $\mathbf{g}(\mathbf{X},\mathbf{Y}) \in \Gamma_{\mathbf{R}',\mathbf{S}'}^{(n)}(\cO')$.  Moreover, $\max_{i \in F} \norm{X_i - f_i \circ \pi_{I'} \circ \mathbf{g}(\mathbf{X},\mathbf{Y})}_2 < \eps / 2$.  Therefore, $\mathbf{X}$ is in the $(F,\eps/2)$-neighborhood of $\mathbf{f} \circ \pi_{I'}$ of some point in $\Gamma_{\mathbf{R}',\mathbf{S}'}^{(n)}(\cO')$, which proves the claimed inclusion.

This inclusion $\pi_I \Gamma_{\mathbf{R},\mathbf{S}}^{(n)}(\cO) \subseteq N_{F,\eps/2}^{\orb}(\mathbf{f}(\pi_{I'}(\Gamma_{\mathbf{R}',\mathbf{S}'}^{(n)}(\cO'))))$ in turn implies that
\begin{align*}
K_{F,\eps}^{\orb}(\pi_I \Gamma_{\mathbf{R},\mathbf{S}}^{(n)}(\cO)) &\leq K_{F,\eps/2}^{\orb}(\mathbf{f}(\pi_{I'}(\Gamma_{\mathbf{R}',\mathbf{S}'}^{(n)}(\cO')))) \\
&\leq K_{F',\eps'}^{\orb}(\pi_{I'}(\Gamma_{(\mathbf{R}',\mathbf{S}')}^{(n)}(\cO'))),
\end{align*}
where the second inequality is the earlier claim that we proved.
\end{proof}

This theorem implies the following:
\begin{itemize}
    \item In the case where $\mathbf{x} = \mathbf{x}'$ and $\mathbf{y} = \mathbf{y}'$, the theorem shows that $h_{\mathbf{S},\mathbf{R}}(\mathbf{x}:\mathbf{y})$ is independent of $\mathbf{R}$ and $\mathbf{S}$ so long as $\norm{x_i} \leq R_i$ for $i \in I$ and $\norm{y_j} \leq S_j$ for $j \in J$.  Thus, we may unambiguously write $h(\mathbf{x}:\mathbf{y})$.
    \item In the case where $\mathrm{W}^*(\mathbf{x},\mathbf{y}) = \mathrm{W}^*(\mathbf{x}',\mathbf{y}')$ and $\mathrm{W}^*(\mathbf{x}) = \mathrm{W}^*(\mathbf{x}')$, the theorem shows that $h(\mathbf{x}:\mathbf{y}) = h(\mathbf{x}':\mathbf{y}')$.  Hence for $N \leq M$, we may unambiguously define $h(N:M)$ as $h(\mathbf{x}:\mathbf{y})$ for some tuples $\mathbf{x}$ and $\mathbf{y}$ such that $N = \mathrm{W}^*(\mathbf{x})$ and $M = \mathrm{W}^*(\mathbf{x},\mathbf{y})$.
    \item Now suppose that $P \leq N \leq M$.  Applying the theorem in the case where $\mathrm{W}^*(\mathbf{x},\mathbf{y}) = \mathrm{W}^*(\mathbf{x}',\mathbf{y}') = M$ and $P = \mathrm{W}^*(\mathbf{x}) \subseteq \mathrm{W}^*(\mathbf{x}') = N$, we obtain $h(P:M) \leq h(N:M)$.
    \item Again, suppose $P \leq N \leq M$. Applying the theorem in the case where $\mathrm{W}^*(\mathbf{x},\mathbf{y}) = M \supseteq N = \mathrm{W}^*(\mathbf{x}',\mathbf{y}')$ and $P = \mathrm{W}^*(\mathbf{x}) = \mathrm{W}^*(\mathbf{x}')$, we obtain $h(P:M) \leq h(P:N)$.
\end{itemize}

%\bibliographystyle{abbrv}
%\bibliography{fieldday}

\end{document}